\newtheorem{theorem}{Theorem}[section]
\newtheorem{corollary}[theorem]{Corollary}
\newtheorem{lemma}[theorem]{Lemma}
\theoremstyle{remark}
\newtheorem{remark}[theorem]{Remark}
\theoremstyle{definition}
\newtheorem{claim}{{\bf Claim}}[theorem]
\newcommand{\st}[2]{L^{#1}_tL^{#2}_x}
\newcommand{\la}{\langle}
\newcommand{\ra}{\rangle}
\newcommand{\hl}{\dot{H}^1\times L^2}
\newcommand{\vp}{\phi}
\newcommand{\LL}{\mathcal{L}}
\newcommand{\tl}{\tilde{\lambda}}
\newcommand{\tg}{\tilde{\gamma}}
\newcommand{\OR}{\overrightarrow}
\newcommand{\M}{\mathcal{M}}
\newcommand{\HL}{\dot{H}^1\times L^2}
\newcommand{\RSTE}{L^{6,2}_xL^{\infty}_t}
\newcommand{\RSTEn}{L^{\infty}_xL^2_t}
\newcommand{\DRST}{L^{6/5,2}_xL^{\infty}_t\cap L^{3/2,1}_xL^{2}_t}
\def\R{\mathbb{R}}
\renewcommand{\R}{\mathbb{R}}
\newcommand{\EQ}[1]{\begin{equation}  \begin{split} #1 \end{split} \end{equation} }
\numberwithin{equation}{section}
\begin{document}

\title[]{ Global center stable manifold for the defocusing energy critical  wave equation  with potential}
\author{Hao Jia, Baoping Liu, Wilhelm Schlag, Guixiang Xu}

\begin{abstract}
In this paper we consider the defocusing energy critical wave
equation with a trapping potential in dimension $3$. We prove that the set of initial data for which
solutions scatter to an unstable excited state $(\phi, 0)$ forms a finite
co-dimensional path connected $C^1$ manifold in the energy space. This manifold is a global and unique center-stable manifold associated with $(\phi,0)$. It is constructed in a first step locally around {\em any  solution scattering to} $\phi$, which might be very far away from $\phi$ in the $\dot H^1\times L^2(\R^3)$ norm. In a second crucial step 
 a no-return property is proved for any solution which starts near, but not on the local manifolds. This ensures that the local manifolds form a global one. 
Scattering to an unstable steady state is therefore a non-generic behavior, in a strong topological sense in the energy space. 
This extends our previous result~\cite{JiaLiuSchlagXu} to the nonradial case.  The new ingredients here  are (i) application of  the reversed Strichartz estimate from~\cite{Beceanu2} to construct a local center stable manifold near any solution that scatters to $(\phi, 0)$. This is needed since the endpoint of the standard Strichartz estimates fails nonradially.  (ii) The nonradial channel of energy estimate introduced by Duyckaerts-Kenig-Merle~\cite{DKM3}, 
which is used to  show that solutions that start off but near the local manifolds associated with $\phi$  emit some amount of energy into the far field in excess of the amount of energy beyond that of the steady state~$\phi$.  
\end{abstract}

\address{Hao Jia:  School of Mathematics,
Institute of Advanced study,
1 Einstein Drive, Princeton, New Jersey 08540, U.S.A. }
\email{jiahao@math.ias.edu}
\address{  Baoping Liu: Beijing International Center for Mathematical Research, Peking University, Beijing, China}
\email{baoping@math.pku.edu.cn}
\address{Wilhelm Schlag: University of Chicago, Department of Mathematics, 5734 South University Avenue, Chicago, IL 60636, U.S.A.}
\email{schlag@math.uchicago.edu}
\address{Guixiang Xu: Institute of Applied Physics and Computational Mathematics, Beijing, China}
\email{xu\_guixiang@iapcm.ac.cn }

\thanks{2010 \textit{ Mathematics Subject Classification.}   35L05, 35B40}
\thanks{The first author was partially supported by NSF grant DMS-1600779, and grant DMS-1128155 through IAS.  The second author was supported by the NSF of China (No.~11601017) and a startup grant from Peking University.
The third author was partially supported by the NSF, DMS-1500696.
The fourth author was partially supported by the NSF of China (No.~11671046
).}

\maketitle

\section{Introduction}

Fix $\beta>2$. Define
\begin{equation*} 
Y:=\left\{V\in C(\R^3):\,  \sup_{x\in
\R^3}(1+|x|)^{\beta}|V(x)|<\infty\right\}.
\end{equation*}
We study solutions to
\begin{equation}\label{eq:mainequation}
\partial_{tt}u-\Delta u-Vu+u^5=0,
\end{equation}
with initial data $\OR{u}(0)=(u_0,u_1)\in \HL(\R^3)$. Since for a
short time the term $Vu$ can be considered as a small perturbation,
by adaptations of results in \cite{BaGe,Gri1,Gri2,Struwe} we know
for any initial data $(u_0,u_1)\in \dot{H}^1\times L^2(\R^3)$, there
exists a unique solution $${u}(t)\in C([0,\infty),\dot{H}^1)\cap
L^5_tL^{10}_x([0,T)\times \R^3)$$ for any $T<\infty$ to equation
(\ref{eq:mainequation}). Moreover, the energy
\begin{equation*}
\mathcal{E}(\overrightarrow{u}(t))=\mathcal{E}(u(t),\partial_tu(t)):=\int_{\R^3}\Big[\frac{|\nabla
u|^2}{2}+\frac{(\partial_tu)^2}{2}-\frac{Vu^2}{2}+\frac{u^6}{6}\Big](x,t)\,dx
\end{equation*}
is constant for all time.

If $V^+(x) =\max{(V(x),0)}$ is large enough,  then the operator $-\Delta -V$ may have negative eigenvalues. In this case,  the equation admits a unique nontrivial ground state $Q>0$ which is the global minimizer of
\begin{equation*}
J(\phi):=\int_{\R^3}\Big[\frac{|\nabla
\phi|^2}{2}-\frac{V\phi^2}{2}+\frac{\phi^6}{6}\Big]\, dx.
\end{equation*}
In
addition to the ground states $Q$ and $-Q$, there can be a number of
``excited states"  with higher energies (see Appendix~A of
\cite{JiaLiuXu}), which are changing sign steady states to equation
(\ref{eq:mainequation}) and decay like $\frac{c}{(1+|x|)}$. Small excited states are always unstable, but large excited states may be stable. These steady states play a fundamental role in understanding the long time dynamics for finite energy solutions to equation (\ref{eq:mainequation}) with  initial data of arbitrary energy. In the radial case we proved in~\cite{JiaLiuXu,JiaLiuSchlagXu} that if we consider generic radial potential $V\in Y$ such that the equation
 admits only finitely many steady states, which are all hyperbolic~\footnote{This means that  the linearized operator $\LL_{\phi}:=\Delta -V +5\phi^4$ around any steady state $(\phi, 0)$ has no zero eigenvalue nor zero resonance}, then generic data will lead to solutions that scatter to one of the stable steady states~\footnote{We say a steady state $(\phi,0)$ is stable if the linearized operator $\LL_{\phi}$ has no negative eigenvalue.}, while each unstable steady state will attract a finite codimensional $C^1$ manifold in the energy space. The result in \cite{JiaLiuSchlagXu} satisfactorily characterized the global dynamical behavior of all finite energy solutions to equation (\ref{eq:mainequation}) in the radial case.

 The proof in~\cite{JiaLiuXu,JiaLiuSchlagXu} relies crucially on the channel of energy estimate for the linear wave equation which was first developed by Duyckaerts-Kenig-Merle~\cite{DKM1, DKM}. The channel of energy estimate works best for wave equation in dimension 3 with radial data. In this case for many nonlinear problems, it  characterizes the steady states as the only solutions that do not radiate energy in either time direction.
It is an essential ingredient in the work of Duyckaerts, Kenig and Merle~\cite{DKM} where  they established the ``soliton resolution" for all type II solutions (i.e. solutions that stay bounded in energy norm up to time infinity or finite blow up time.) for focusing energy critical wave equation with radial data in $\R^3$.  In the nonradial case or other dimensions, there are only weaker versions of the channel of energy estimate available~\cite{DKMnonradial,RKS,KLLS}, and they have been used to establish
  similar resolution results for focusing energy critical wave equations either under size restriction for the initial data~\cite{DKMnonradial}, or along a sequence of times~\cite{CKLS, JiaKenig, Casey, DJKM}. All the results mentioned here belong to a larger effort that aims to understand the long time dynamics for solutions of dispersive equations in the presence of nontrivial coherent structures~\footnote{As defined in~\cite{SofferICM}, coherent structures are solutions that are localized in space, uniformly in time.
 Examples are solitons, kinks, vortices, monopoles, breathers, etc.}. Due to the limitation of techniques to deal with problems beyond the perturbative regime,    we are still at an early stage of understanding of this type of problem. Hence the current interest is to work    on carefully chosen models in order to develop our intuition and technique.
 
We refer the reader to~\cite{Cotemap1, Cotemap2, cotesoliton, KLS, JiaKenig, KLIS2} and references therein for the related results on equivariant wave maps, and to~\cite{TaoCompact,TsaiYau} for  results on nonlinear Schr\"{o}dinger equations with potential.

In this paper, we consider nonradial solutions to (\ref{eq:mainequation}) and
construct the global center stable manifold for unstable excited states.  This gives us a better understanding  of the non-generic behavior of solutions. More precisely, our result shows that solutions that scatter to unstable excited states form a finite co-dimentional manifold in the energy space and hence such solutions are non-generic in a very precise, topological sense.  Although such results are expected, it is often not easy to rigorously confirm them, in a non-perturbative setting. 

More precisely, we say a solution $\OR{u}$ scatters to  steady state $(\phi,0)$  as $t\rightarrow +\infty$ if  there exists a finite energy free wave $\OR{u}^L$ (solution to the linear wave equation) such that
\[\|\OR{u}(t) -(\phi,0) -\OR{u}^L(t)\|_{\HL}\rightarrow 0, \hspace{1cm} \text{ as  } t\rightarrow +\infty.\]
We establish the following result.

 \begin{theorem}\label{th:maintheoremintro}
Let $\Omega$ be an open dense subset of $Y$ such that equation
(\ref{eq:mainequation}) with $V\in \Omega$ has only finitely many steady states which are all hyperbolic.\footnote{We know that such $\Omega$ exists by easy adaptions of arguments in \cite{JiaLiuXu}.} Let $\Sigma$ be the set of    steady states.
 Denote $\OR{u}(t):=\OR{S}(t)(u_0,u_1)$ as the solution to equation (\ref{eq:mainequation}) with  initial data $(u_0,u_1)\in\HL(\R^3)$.
 For each $(\phi,0)\in\Sigma$, define
\begin{equation}
\M_{\phi}:=\left\{(u_0,u_1)\in
\HL(\R^3):\,\OR{S}(t)(u_0,u_1)\,\,{\rm
scatters\,\,to\,\,}(\phi,0)\,{\rm as}\,\,t\to +\infty\right\}.
\end{equation}
Denote
\begin{equation}
\mathcal{L}_{\phi}:=-\Delta -V+5\phi^4
\end{equation}
as the linearized operator around $\phi$. If $\mathcal{L}_{\phi}$
has no negative eigenvalues, then $\M_{\phi}$ is an open set
$\subseteq \HL(\R^3)$. If $\mathcal{L}_{\phi}$  has $n$ negative eigenvalues, then $\M_{\phi}$ is a path
connected $C^1$ manifold $\subset \HL(\R^3)$ of co-dimension $n$.
\end{theorem}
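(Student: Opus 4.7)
The plan is to establish the theorem in two main steps that mirror the structure outlined in the abstract. When $\LL_{\phi}$ has no negative eigenvalues, $(\phi,0)$ is asymptotically stable for the linearized flow, and showing that $\M_\phi$ is open reduces to a standard perturbative scattering argument: any initial datum close enough to one whose solution scatters to $(\phi,0)$ will have bounded Strichartz norm on $[0,\infty)$ by long-time perturbation theory, and the asymptotic free wave profile depends continuously on data, which preserves scattering to $(\phi,0)$.

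For the unstable case, the heart of the argument is a \emph{local} center-stable construction near an arbitrary reference solution $\OR{U}(t)$ that scatters to $(\phi,0)$; note that $\OR{U}(0)$ need not be close to $(\phi,0)$ in $\HL$. Writing $\OR{u}=\OR{U}+\OR{w}$, the remainder $\OR{w}$ solves a wave equation whose linear part has potential $-V+5U^4$, which approaches $-V+5\phi^4$ (asymptotically $\LL_\phi$) as $t\to\infty$ along trajectories of $\OR{U}$. Using the Beceanu reverse Strichartz estimates in place of the failing nonradial endpoint, I would set up a Lyapunov--Perron fixed-point scheme on $[0,\infty)$: project the data onto the $n$-dimensional unstable eigenspace of $\LL_\phi$ and onto its spectral complement, parametrize by the center-stable component and solve for the unstable component so that the unstable projection of $\OR{w}(t)$ remains bounded. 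The output is a codimension-$n$, $C^1$ graph $\M^{\rm loc}_{\OR{U}}\subset \HL$ near $\OR{U}(0)$, consisting precisely of initial data whose solutions scatter to $(\phi,0)$ with small deviation from $\OR{U}$ for all $t\ge 0$.

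The second step is to show that these local pieces assemble into a single global $C^1$ manifold. For this I would establish a no-return/ejection dichotomy: any initial datum in a small $\HL$-neighborhood of $\M^{\rm loc}_{\OR{U}}$ but not on it must leave that neighborhood in forward time and fail to scatter to $(\phi,0)$. The mechanism, following the radial treatment in \cite{JiaLiuSchlagXu}, is a modulation/ejection lemma: the unstable projection $\alpha(t)$ of the deviation from $\OR{U}$ obeys an approximate ODE $\dot\alpha\sim \nu\alpha$ with $\nu>0$, so once it exceeds a small threshold it grows exponentially and expels the solution from the tubular neighborhood. The crucial new nonradial ingredient is the nonradial exterior channel of energy estimate of Duyckaerts--Kenig--Merle~\cite{DKM3}, applied to the linearized ejection profile to show that this expulsion emits a definite amount of energy into the region $|x|\ge t-R$ in excess of $\mathcal{E}(\phi,0)$. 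This radiated energy survives in any hypothetical free wave profile, and therefore the solution cannot scatter back to $(\phi,0)$.

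With the no-return property, each $\M^{\rm loc}_{\OR{U}}$ coincides with $\M_\phi$ in a neighborhood of $\OR{U}(0)$, and overlapping charts are compatible, yielding the global codimension-$n$ $C^1$ manifold structure. Path connectedness follows by using the scattering definition to push any point of $\M_\phi$ forward in time into an arbitrarily small $\HL$-neighborhood of $(\phi,0)$, where the local manifold $\M^{\rm loc}_{(\phi,0)}$ is a genuine graph over the center-stable subspace of $\LL_\phi$ and hence path-connected; then one pulls the connecting paths back by continuity of the flow on $\M_\phi$. The main obstacle I anticipate is the ejection step in the nonradial setting: because the nonradial channel of energy controls only an exterior piece of the linear energy and must be carefully decoupled from contributions of $\OR{U}$ and of $(\phi,0)$ itself, significant care is required to certify that the emitted energy is attributable to the departure from $\M^{\rm loc}_{\OR{U}}$ rather than to the reference dynamics.
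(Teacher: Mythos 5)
Your overall architecture matches the paper's: a local Lyapunov--Perron construction near an arbitrary scattering solution $\OR{U}$ via reverse Strichartz estimates, a no-return step powered by the Duyckaerts--Kenig--Merle nonradial exterior energy estimate, and patching of local charts. But two steps in your proposal have genuine gaps.

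First, the no-return argument. You invoke a ``modulation/ejection lemma'' asserting that the unstable projection of $\OR{u}-\OR{U}$ grows and then feed ``the linearized ejection profile'' to the channel estimate. The missing piece is how you certify, at the exit time $T_2$ (where $\|\OR{u}(T_2)-\OR{U}(T_2)\|_{\HL}=\epsilon_1$), that the perturbation $(w_0,w_1):=\OR{u}(T_2)-(\phi,0)-\OR{U}^L(T_2)$ has a \emph{definite} discrete-mode component relative to its $\HL$-size, rather than being carried mostly by the continuous spectrum. Exponential growth of the unstable ODE modes of $\eta=u-U$ does not by itself yield this: exit can be caused by several components, and the channel estimate requires a quantitative lower bound on the discrete projections of $(w_0,w_1)$ in the $\LL_\phi$-decomposition. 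The paper closes this gap by a second-order expansion of the conserved energy $\mathcal{E}(\OR{u})$ around $(\phi,0)$ combined with coercivity of $\LL_\phi$ on the continuous subspace, which converts the constraint $|\mathcal{E}(\OR{u})-\mathcal{E}(\OR{U})|\lesssim\epsilon\ll\epsilon_1^2$ into the lower bound $\sum_i|\mu_i^\pm|^2\gtrsim\epsilon_1^2$ on the discrete modes of $(w_0,w_1)$. Moreover, even after this is established, the dominant mode may be a \emph{stable} mode $\mu_{i_0}^-$; this case is not eliminated by forward-time radiation and instead has to be ruled out by a backward-in-time channel argument contradicting the finiteness of $\|\OR{U}(0)\|_{\HL}$. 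Your proposal does not address either the energy-expansion coercivity or the $\mu^-$-dominant case, and without them the step fails.

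Second, the path-connectedness argument is incorrect as stated. You propose to push data forward in time into a small $\HL$-neighborhood of $(\phi,0)$ and use the graph structure of the local manifold there. But scattering to $(\phi,0)$ means $\OR{u}(t)-(\phi,0)-\OR{U}^L(t)\to 0$ in $\HL$ for a nontrivial free wave $\OR{U}^L$, so $\|\OR{u}(t)-(\phi,0)\|_{\HL}\to\|\OR{U}^L\|_{\HL}$, which is a \emph{fixed positive constant} unless $\OR{U}^L\equiv 0$. Generic points of $\M_\phi$ never enter a small $\HL$-ball around $(\phi,0)$, so there is no single chart to reduce to. The paper instead constructs an explicit path by an interpolation ansatz $w(\theta)=(1-\theta)u+\theta\tilde u+\eta(\theta)$ and solves for $\eta(\theta)$ via the same fixed-point scheme (with the stability condition imposed), verifying that $w(0)=u$, $w(1)=\tilde u$ and that each $w(\theta)$ scatters to $\phi$; this produces a continuous path inside $\M_\phi$ directly.
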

 
We note that there is no smallness assumption in the theorem, and the manifold can extend arbitrarily far away from the unstable steady state relative to the norm in $\dot H^1\times L^2(\R^3)$. 

Theorem \ref{th:maintheoremintro} characterizes all solutions that scatter to a steady state. We expect that generically all solutions scatter to steady states. In the radial case, it was proved that for generic potential all finite energy solutions scatter to one of the steady states, but the proof depends on a particular form of the channel of energy inequality which is valid only in three dimensions and in the radial case. In the nonradial case, it remains an open problem how to characterize the generic behavior. It is perhaps worth pointing out that all nonradial large data results in the study of dynamics of nonlinear dispersive equations depend crucially on monotonicity formulae which are sensitive to algebraic features of the equation. There are currently no effective monotonicity formulae known for equation (\ref{eq:mainequation}) in the nonradial case.

Compared with the radial case~\cite{JiaLiuSchlagXu}, we have two main difficulties in constructing the manifold: 

 (i) Consider any solution $\OR{U}$ that scatters to unstable excited states $(\phi,0)$. When we perturb around $U$, i.e., we write the solution as $U+\eta$, the resulting nonlinearity contains quadratic terms like $U(t)^3\eta^2$ which have a component that behaves like $\phi^3\eta^2$. Standard Strichartz estimate requires control of the nonlinearity in spaces such as $L^1_tL^2_x$, which forces us to estimate $\eta$ in the endpoint Strichartz norm $L^2_tL^\infty_x$.
However,   the endpoint Strichartz estimate for free waves was shown to be false for general   data in~\cite{KM}. To overcome this technical obstacle, we use the    reversed Strichartz estimate due to  Beceanu and Goldberg~\cite{Beceanu2}.  By reverse Strichartz estimates, we mean estimates in the space $\|\cdot\|_{L^p_xL^q_t}$. That is, we first integrate in time and then in space, which is the reverse order of integration for the usual Strichartz estimates. This order of integration arises naturally  in the context of KdV and derivative nonlinear  Schr\"{o}dinger equations, where the local smoothing effect needs to be exploited. For the wave equation the advantages of space-time reversal are less well-known, see however Proposition~3.1 in~\cite{KriNakSch} for an example of an $L^\infty_x L^1_t$ estimate which fails for~$L^1_tL^\infty_x $. 
 In that reference as well as in our case, the main feature is that the fundamental solution for linear wave equation in three dimensions is nonnegative and is integrable in time:
\EQ{\label{eq:Fund}
\int_{0}^{\infty}\frac{1}{t}\delta(|x|-t)\,dt=\frac{1}{|x|}.
}
This property can be used to trade decay in space for decay in time.  For the  $\phi^3\eta^2$ term, which is only quadratic in $\eta$,  there is not enough decay in time to use the standard Strichartz estimates. On the other hand, there is enough decay in space thanks to the $\phi^3$ term. This is exactly the right kind of problem where the reverse Strichartz estimates are more effective.  

Using the reverse Strichartz estimates, we can follow the same techniques in \cite{JiaLiuSchlagXu} to construct a local, finite co-dimensional center stable manifold $\mathcal{M}$ near $\OR{U}(0)$ with the property that if a solution $u$ starts on the manifold, i.e., $\OR{u}(0)\in\mathcal{M}$, then $\OR{u}(t)$ stays close to $\OR{U}(t)$ for all $t\ge 0$ and scatters to $(\phi,\,0)$ as $t\to\infty$; if on the other hand, $\OR{u}(0)$ is close to $\OR{U}(0)$ but not on the manifold, then no matter how small $\left\|\OR{u}(0)-\OR{U}(0)\right\|_{\HL}$ is, $\OR{u}(t)$ will deviate from $\OR{U}(t)$ by a fixed amount at a future time.  

 (ii) 
 The   local manifold construction ensures that any  solution $\OR{u}(t)$ starting off the local manifold, i.e.,   $\OR{u}(0)\in B_{\epsilon}(\OR{U}(0))\backslash \mathcal{M}_\phi$, will leave the time dependent neighborhood $B_{\epsilon}(\OR{U}(t))$ eventually. Up to this point, the argument is still essentially based on perturbative techniques. However, perturbative arguments alone are not sufficient to determine the dynamics when $\OR{u}(t)$ and $\OR{U}(t)$ separate from each other. In order to obtain information on the dynamics for all times, we use the channel of energy inequality introduced by Duyckaerts-Kenig-Merle \cite{DKM3} to show that the solution $u$ necessarily radiates energy into the far field after it leaves $B_{\epsilon}(\OR{U}(t))$.  This is the crucial global component in our paper.  The channel of energy inequality we use here works for nonradial solutions and is not sensitive to the dimension. For another channel of energy inequality which applies in the nonradial case and in all dimensions, see the one for outgoing waves in \cite{DJKM} and \cite{DJKMmap}.  More precisely,
 since $\OR{U}$ scatters to $\phi$, at large times  we know that $\OR{U}(t)$ can essentially be identified as a free radiation at large distances and  $(\phi,0)$ in the finite region. 
 If we take initial data $\OR{u}(0)$ and $\OR{U}(0)$ close enough so that at a given large time $t$ the solutions are still sufficiently close,  we can conclude that locally $\OR{u}(t)$ is essentially $(\phi,0)$ plus a small but nontrivial perturbation.  We will show that the perturbation contains a nontrivial unstable mode, which grows exponentially. Hence at a later time, when the unstable mode dominates all other modes, we use the channel of energy estimate in Lemma~\ref{nonlinear} to conclude that  $\OR{u}$ will send out a fixed amount of energy into large distances and hence the energy left  in the finite region is  strictly less than that of $(\phi, 0)$. From this we know that $\OR{u}$ cannot scatter to $(\phi,0)$. It is interesting to note that our argument shows that a solution, which starts close, but off of the manifold and far away from the unstable steady state, exhibits {\it two} types of radiation: a first   radiation so that locally in space it is close to the unstable steady state at   large times,  and a second radiation which eventually pulls it off the steady state forever.  

In effect, this second step is in the nature of a {\em one-pass theorem}, see~\cite{NakSch1, NakSch2, NakSch3}. While a virial identity is the key for the one-pass theorem in those references, here it is an exterior energy estimate. 
 
Our paper is organized as follows.  
   In Section~\ref{sec:localmanifold},  we construct the local center stable manifold for each solution that scatters to $\phi$.  In Section~\ref{sec:4} we recall the perturbation lemma,  prove the channel of energy estimate and also prove a result on the growth of the unstable modes. Lastly,    in Section~\ref{sec:5} we prove our main result Theorem~\ref{th:maintheoremintro}.

\section{Construction of the local center-stable  manifold}
\label{sec:localmanifold}

We begin with some notation. We use   $c,C>0$  to denote   positive  constants that may be different from line to line.  For nonnegative quantities $X$ and $Y$, we write $X\lesssim Y$ when $X\leq CY$ for some non-essential $C>0$.  When a given operator $L$ has negative eigenvalues, we denote these  as $-k^2$ with $k>0$. Since we work with fixed potentials, we allow all constants to depend on the potential.

Let us first recall the definition of Lorentz spaces $L_x^{p,q}(\R^3)$ for $0<p<\infty$ and $0<q\leq \infty$
\[\|f\|_{L^{p,q}_x(\R^3)}:=p^{\frac{1}{q}}\|\lambda \mu\{|f|\geq \lambda\}^{\frac{1}{p}}\|_{L^q(\R^+, \frac{d\lambda}{\lambda})}.\]
Here  $\mu$ is the standard Lebesgue measure on $\R^3$. Clearly $L^{p,p}=L^p$ for any $0<p<\infty$. We adopt the usual convention that $L^{\infty, \infty}=L^\infty$. Notice that $L^{p,q}\subset L^{p,r} $ whenever $q<r.$
The H\"{o}lder inequality still holds for Lorentz spaces~\cite{Neil}, viz. 
 \EQ{\label{eq:Ho1}
\| fg\|_{L^{r,s}} &\le r' \|f\|_{L^{p_1,q_1}} \|g\|_{L^{p_2,q_2}}  \text{\ \ provided}\\
 \frac{1}{p_1}+\frac{1}{p_2} &= \frac1r<1,\; \frac{1}{q_1}+\frac{1}{q_2}\ge\frac1s
 }
 and the endpoint
 \EQ{\label{eq:Ho2}
\| fg\|_{L^{1}} &\le \|f\|_{L^{p,q_1}} \|g\|_{L^{p',q_2}}, \quad \frac{1}{q_1}+\frac{1}{q_2}\ge 1.
}
  Young's inequalities read as follows: 
\EQ{\label{eq:Y1}
\| f\ast g\|_{L^{r,s}} &\le 3r \|f\|_{L^{p_1,q_1}} \|g\|_{L^{p_2,q_2}} \text{\ \ provided} \\
\frac{1}{p_1}+\frac{1}{p_2} &= \frac1r+1>1,\;\frac{1}{q_1}+\frac{1}{q_2}\ge\frac1s
}
and the endpoint
\EQ{\label{eq:Y2}
\| f\ast g\|_{L^{\infty}} &\le \|f\|_{L^{p,q_1}} \|g\|_{L^{p',q_2}}, \quad \frac{1}{q_1}+\frac{1}{q_2}\ge 1.
}
Since  $Y\subset L^{\frac32, 1}_x(\R^3)$,  Theorem 3, Theorem 1 and Corollary 2 of ~\cite{Beceanu2} imply the following {\em reversed Strichartz estimate} for wave equations with a potential $V\in Y$  in~$\R^3$.

\begin{lemma}\label{lm:RervStrichartzwithPotential}
Take $V\in Y$ such that the operator $-\Delta -V$ has no zero
eigenvalues or zero resonance. Denote by $P^{\perp}$   the projection
operator onto the continuous spectrum of $-\Delta-V$. Let
\begin{equation}\label{eq:sqrtoperator}
\omega:=\sqrt{P^{\perp}(-\Delta -V)}.
\end{equation}
Let $I$ be a time interval with $t_0\in I$. Then for any $(f,g)\in
\HL(\R^3)$ and $F\in \DRST(\R^3\times I)$, the solution
$\OR{\gamma}(t)=(\gamma(t),\,\partial_t\gamma(t))$ to the equation
\begin{equation}
\partial_{tt}\gamma+\omega^2\gamma=P^{\perp}F, \quad (t,x)\in I\times \R^3,
\end{equation}
with $\OR{\gamma}(t_0)=P^{\perp}(f,g)$ satisfies
\begin{equation}\label{eq:strichartzwithpotential}
\|(\gamma,\gamma_t)\|_{C_t^0 (\dot{H}^1\times L^2)}  +
\|\gamma\|_{\RSTE\cap\RSTEn( \R^3\times I)} \leq C
\left(\|(f,g)\|_{\hl} +\|F\|_{\DRST ( \R^3 \times I)}\right).
\end{equation}
\end{lemma}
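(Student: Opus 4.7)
The plan is to assemble the stated bound directly from the reversed Strichartz machinery of Beceanu--Goldberg \cite{Beceanu2}, with the potential handled via the spectral projection $P^\perp$. First I would verify the integrability hypothesis needed by that paper, namely $V \in L^{3/2,1}_x(\R^3)$: this follows from $V \in Y$ with $\beta > 2$ by a direct dyadic computation on the level sets of $(1+|x|)^{-\beta}$, since this weight lies in $L^{3/2,1}$ precisely when $\beta > 2$. The assumption that $-\Delta - V$ has no zero eigenvalue and no zero resonance guarantees that $P^\perp$ is a bounded projection on $\dot H^1$ and on $L^2$, and that $\omega = \sqrt{P^\perp(-\Delta - V)}$ has the expected functional calculus on $\mathrm{Ran}(P^\perp)$; in particular the propagators $\cos((t-t_0)\omega)$ and $\omega^{-1}\sin((t-t_0)\omega)$ preserve the $\hl$-energy.

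I would then split $\gamma$ via Duhamel's formula,
\[
\gamma(t) = \cos((t-t_0)\omega)\, P^\perp f + \omega^{-1}\sin((t-t_0)\omega)\, P^\perp g + \int_{t_0}^{t} \omega^{-1}\sin((t-s)\omega)\, P^\perp F(s)\,ds,
\]
and handle each piece in turn. For the two homogeneous terms, Theorem 1 of \cite{Beceanu2} gives the free wave reverse Strichartz bound in $\RSTE \cap \RSTEn$, while Theorem 3 extends it to the perturbed Hamiltonian $-\Delta - V$ on the continuous spectral subspace; combined they yield the desired estimate by $\|(f,g)\|_{\hl}$. For the retarded Duhamel term, Corollary 2 of \cite{Beceanu2} furnishes the reverse Strichartz estimate that controls the $\RSTE \cap \RSTEn$ norm by the $\DRST$ norm of $F$; a Christ--Kiselev argument in time reduces the untruncated inhomogeneous estimate to the retarded one. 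The uniform bound $\|(\gamma,\gamma_t)\|_{C^0_t(\hl)}$ then follows from energy conservation for the homogeneous part, together with a standard $TT^\ast$ pairing that dualizes the reverse Strichartz estimate to absorb the forcing.

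The conceptual engine behind these bounds is the identity $\int_0^{\infty} t^{-1}\delta(|x|-t)\,dt = |x|^{-1}$ already highlighted in the introduction: the positivity and time-integrability of the three-dimensional wave kernel let one trade spatial decay for time decay, which is exactly what makes the reversed order of integration effective where the usual endpoint Strichartz estimate fails. The main obstacle, fully absorbed into the cited results of Beceanu--Goldberg, is to show that these favorable structural features of the free kernel survive conjugation by the continuous-spectrum projector $P^\perp$ and the distorted Fourier transform associated with $-\Delta - V$. This rests on a resolvent expansion near the bottom of the continuous spectrum for $V$ in the borderline scaling class $L^{3/2,1}$, which closes precisely under the spectral hypothesis stated; once this is in place, the estimate for $\gamma$ matches the free analogue line by line.
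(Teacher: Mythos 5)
Your approach matches the paper's exactly: the paper gives no proof beyond the one-line remark that the lemma follows from the embedding $Y\subset L^{3/2,1}_x(\R^3)$ together with Theorem 3, Theorem 1, and Corollary 2 of~\cite{Beceanu2}, which are precisely the ingredients you cite and assemble via the Duhamel split. Your dyadic verification that $\beta>2$ places the weight in $L^{3/2,1}$, and your remark that the zero-eigenvalue/resonance hypothesis yields a bounded projector $P^\perp$ with the standard functional calculus for $\omega$, are the right preliminary checks. One small caution: the appeal to a Christ--Kiselev argument to pass from an untruncated inhomogeneous estimate to the retarded one would actually fail here, because in the reversed norms $\RSTE$, $\RSTEn$ versus $\DRST$ the \emph{time} exponents on the two sides coincide ($\infty$--$\infty$ and $2$--$2$), which is exactly the endpoint case excluded by Christ--Kiselev; fortunately this step is unnecessary, since Corollary~2 of~\cite{Beceanu2} is already formulated for the retarded Duhamel integral, using the positivity and time-integrability of the 3D wave kernel rather than a truncation argument.
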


The appearance of Lorentz spaces here is both natural and essential. Indeed, $|x|^{-1}\in L^{3,\infty}(\R^3)$, and by~\eqref{eq:Ho2} or \eqref{eq:Y2}, 
\[
\sup_{y\in\R^3} \int_{\R^3} |f(x)| |x-y|^{-1}\, dx \le C \|f\|_{L^{\frac32,1}(\R^3)},
\]
cf.~\eqref{eq:Fund}. 
  Our main goal in this section is to prove the following result on the local center stable manifold.
\begin{theorem}\label{th:localmanifold}
Let $\Omega$ be a dense open subset of $Y$ such that equation
(\ref{eq:mainequation}) has only finitely many steady states, all of
which are hyperbolic. Let $V\in \Omega\subset Y$. Suppose that
$\overrightarrow{U}(t)$ is a finite energy solution to equation
(\ref{eq:mainequation}) which scatters to an unstable steady state
$(\phi,0)$. Let
\begin{equation}
-k_1^2\leq-k_2^2\leq\cdots\leq-k_n^2<0
\end{equation}
be the negative eigenvalues  of $\LL_{\phi} = -\Delta-V+5\vp^4$ (counted with
multiplicity) with orthonormal  eigenfunctions
$\rho_1,\,\rho_2,\dots,\rho_n$, respectively. We denote by $P_i$   the projection operator
onto the $i$-th eigenfunction and by $P^{\perp}$
 the projection operator  onto the continuous spectrum, i.e.,
 \begin{align*}
 P_i =\rho_i \otimes \rho_i , \quad\quad
 P^{\perp} =I -\sum_{i=1}^n\rho_i \otimes \rho_i.
 \end{align*} Decompose
\begin{equation}
\HL(\R^3)=X_{cs}\oplus X_u,
\end{equation}
where
\begin{equation}
X_{cs}=\left\{(u_0,u_1)\in \HL(\R^3):\,\langle k_ju_0+
u_1,\rho_j\rangle_{L^2}=0,\,{\rm for\,all}\,1\leq j\leq n\right\},
\end{equation}
and
\begin{equation}
X_u={\rm span}\,\left\{(\rho_j,k_j\rho_j),\,\,1\leq j\leq n\right\}.
\end{equation}
Then there exist $\epsilon_0>0$, $T$ sufficiently large, a ball
$B_{\epsilon_0}((0,0))\subset \HL(\R^3)$, and a smooth mapping
\begin{equation}
\Psi: \overrightarrow{U}(T)+\left(B_{\epsilon_0}((0,0))\cap
X_{cs}\right)\longrightarrow \HL,
\end{equation}
satisfying $\Psi(\overrightarrow{U}(T))=\overrightarrow{U}(T)$, with
the following property. Let $\widetilde{\mathcal{M}}$ be the graph
of $\Psi$ and set $\mathcal{M}=\OR{S}(-T)\widetilde{\mathcal{M}}$, where $\OR{S}(t)$ denotes the solution map for equation (\ref{eq:mainequation}).
Then any solution to equation (\ref{eq:mainequation}) with initial
data $(u_0,u_1)\in\mathcal{M}$ scatters to $(\vp,0)$. Moreover,
there is an $\epsilon_1$ with $0<\epsilon_1<\epsilon_0$, such that
if a solution $\overrightarrow{u}(t)$ with initial data
$(u_0,u_1)\in B_{\epsilon_1}(\overrightarrow{U}(0))\subset
\HL(\R^3)$ satisfies
\begin{equation}
\|\overrightarrow{u}(t)-\overrightarrow{U}(t)\|_{\dot{H}^1\times
L^2}<\epsilon_1 \,\,{\rm for\,\,all\,\,}t\ge 0,
\end{equation}
then $(u_0,u_1)\in \mathcal{M}$.
\end{theorem}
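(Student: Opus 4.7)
The plan is a Lyapunov--Perron fixed-point argument adapted to the spectral decomposition of $\mathcal{L}_\phi$, executed in the reverse Strichartz spaces of Lemma~\ref{lm:RervStrichartzwithPotential}. Writing $u=U+\eta$ and shifting time by $s=t-T$ for $T$ large, the perturbation satisfies
\[
\partial_{tt}\eta+\mathcal{L}_\phi\eta=-5(U^4-\phi^4)\eta-\mathcal{N}(U,\eta),\qquad \mathcal{N}(U,\eta)=10U^3\eta^2+10U^2\eta^3+5U\eta^4+\eta^5.
\]
Decomposing $\OR{\eta}=\sum_{i=1}^n\bigl(a_i(\rho_i,k_i\rho_i)+b_i(\rho_i,-k_i\rho_i)\bigr)+P^{\perp}\OR{\eta}$ diagonalizes the linear flow: the mode equations read $\dot a_i=k_ia_i+F_i^+$ and $\dot b_i=-k_ib_i+F_i^-$, while $P^{\perp}\OR{\eta}$ obeys a wave equation $\partial_{tt}\eta^{\perp}+\omega^2\eta^{\perp}=P^{\perp}(\mathrm{RHS})$. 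I would parametrize the problem by the $X_{cs}$-coordinates $(b_i(0))_{i=1}^n$ and $P^{\perp}\OR{\eta}(0)$ at $s=0$ and kill the exponentially growing modes via the backward integral $a_i(0)=-\int_0^\infty e^{-k_is}F_i^+(s)\,ds$, which selects the unique bounded orbit of the unstable ODE.

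For the continuous-spectrum piece I would apply Lemma~\ref{lm:RervStrichartzwithPotential}, controlling $\eta^{\perp}$ in $\RSTE\cap\RSTEn$ (together with the energy norm) against the source measured in $\DRST$. The pivotal nonlinear bound concerns the quadratic term $U^3\eta^2$. Since $\OR{U}$ scatters to $(\phi,0)$, the dominant contribution is $\phi^3\eta^2$; because $\phi(x)=O(|x|^{-1})$ one has $\phi\in L^{3,\infty}_x$ and hence $\phi^3\in L^{1,\infty}_x$. Lorentz H\"older~\eqref{eq:Ho1} then yields
\[
\|\phi^3\eta^2\|_{L^{3/2,1}_xL^2_t}\lesssim \|\phi^3\|_{L^{1,\infty}_x}\,\|\eta\|_{\RSTE}\,\|\eta\|_{\RSTEn},
\]
which is quadratic in the reverse Strichartz norm with no temporal smallness needed. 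Higher-order terms $U^{3-j}\eta^{j+2}$ and the linear correction $(U^4-\phi^4)\eta$ are handled by a mix of reverse and standard Strichartz estimates, using that $\OR{U}(t)-(\phi,0)$ is asymptotically a free radiation of small $\DRST$-norm on $[T,\infty)$. A standard contraction together with an implicit function step then produces a $C^1$ map $\Psi$ with $\Psi(\OR{U}(T))=\OR{U}(T)$; pulling back by $\OR{S}(-T)$ defines $\mathcal{M}$, whose path connectedness is inherited from that of $B_{\epsilon_0}((0,0))\cap X_{cs}$ and whose codimension is $\dim X_u=n$.

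For the trapping statement I would argue by contradiction: if $\OR{u}(t)$ stays within $\epsilon_1$ of $\OR{U}(t)$ for all $t\ge 0$ but $(u_0,u_1)\notin\mathcal{M}$, then at time $T$ some unstable coordinate $a_i(0)$ differs from its Lyapunov--Perron value by $\delta\neq 0$, and the ODE $\dot a_i=k_ia_i+F_i^+$ with $F_i^+$ controlled by the global smallness of $\OR{\eta}$ forces the discrepancy to grow like $\delta\, e^{k_is}$ for $s$ in an interval of length comparable to $k_i^{-1}\log(\epsilon_1/\delta)$, eventually exceeding $\epsilon_1$ and contradicting the hypothesis. The main obstacle is precisely the quadratic term $\phi^3\eta^2$: standard Strichartz would demand $\eta\in L^2_tL^{\infty}_x$, whose endpoint fails nonradially~\cite{KM}, so the whole estimate architecture must be rebuilt in Lorentz-valued reverse Strichartz norms, with every nonlinear bound reverified via the Lorentz H\"older and Young inequalities~\eqref{eq:Ho1}--\eqref{eq:Y2}. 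A secondary subtlety is ensuring that the spectral projections of $\mathcal{L}_\phi$ still decouple the flow in the presence of the time-dependent correction $5(U^4-\phi^4)\eta$; this reduces to the smallness of $\OR{U}-(\phi,0)$ on $[T,\infty)$ in the $\DRST$ norm, which is the quantitative form of the scattering hypothesis for $\OR{U}$.
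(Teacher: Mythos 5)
Your overall strategy coincides with the paper's: write $u=U+\eta$, pass to a large time $T$, split $\OR{\eta}$ into the unstable/stable discrete modes plus the continuous component, impose the Lyapunov--Perron (stability) condition to kill the exponentially growing pieces, and close a contraction in the reverse Strichartz spaces of Lemma~\ref{lm:RervStrichartzwithPotential}. Your first-order decomposition $a_i(\rho_i,k_i\rho_i)+b_i(\rho_i,-k_i\rho_i)$ is equivalent (by $a_i=\tfrac12(\tl_i+\dot{\tl}_i/k_i)$, $b_i=\tfrac12(\tl_i-\dot{\tl}_i/k_i)$) to the paper's second-order variables $\tl_i$ satisfying $\ddot{\tl}_i-k_i^2\tl_i=\tilde N_{\rho_i}$, and the backward integral defining $a_i(0)$ is exactly the stability condition~\eqref{condition2}.

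However, your pivotal Lorentz--H\"older estimate is numerically wrong. You claim $\phi^3\in L^{1,\infty}_x$ (from $\phi=O(|x|^{-1})$) and write
\[
\|\phi^3\eta^2\|_{L^{3/2,1}_xL^2_t}\lesssim \|\phi^3\|_{L^{1,\infty}_x}\,\|\eta\|_{\RSTE}\,\|\eta\|_{\RSTEn}.
\]
The product $\eta\cdot\eta$ with one factor in $L^{6,2}_xL^\infty_t$ and one in $L^\infty_xL^2_t$ lands in $L^{6,2}_xL^2_t$, so to reach $L^{3/2,1}_x$ one needs $\frac1p+\frac16=\frac23$, i.e. $\phi^3\in L^{2}_x$ (equivalently $\phi\in L^6$), not $L^{1,\infty}_x$; your choice gives $\frac11+\frac16=\frac76>1$ and also fails the second-index condition $\frac{1}{q_1}+\frac{1}{q_2}\ge 1$ in~\eqref{eq:Ho1}. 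The paper's step~2 uses precisely $\|\phi\|_{L^6_x}^3$. The fix is harmless (your $\phi$ does lie in $L^6$, by $\dot H^1\hookrightarrow L^6$), but as written the central estimate does not close. A related misstatement: the decoupling of the flow requires $h=U-\phi$ small in $\RSTE\cap\RSTEn$ on $[T,\infty)$ (so that terms like $\phi^{4-j}h^j\eta$ land in $\DRST$ with a small coefficient); it is not the $\DRST$-smallness of $\OR{U}-(\phi,0)$ that is needed, and establishing that $\RSTE\cap\RSTEn$ smallness requires the nonlinear bootstrap of the paper's step~2 (including the continuity Claim~\ref{claim2}), which you should not skip.

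Finally, your trapping argument is stated as a contradiction: a nonzero unstable coordinate grows like $\delta e^{k_is}$. But the source $F_i^+$ depends on the solution, so the discrepancy is not driven by a fixed exponential; one needs a Gronwall-type refinement or, more cleanly, the paper's direct route: the uniform-in-time $L^\infty$ bound on $\tl_i$ forces the coefficient of $e^{k_it}$ to vanish (the stability condition), after which $(\tl_i,\tg)$ satisfies the same integral system and must coincide with the unique contraction fixed point, i.e. $\OR{u}(T)\in\widetilde{\mathcal M}$. You should also prove the scattering claim for solutions on $\mathcal M$ (the paper's step~4, via Claim~\ref{claim3}), which your proposal does not address.
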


\smallskip
\noindent
{\it Remark:}  $\Omega$ as in the theorem exists, see \cite{JiaLiuSchlagXu}. The proof of Theorem \ref{th:localmanifold}  closely follows the argument for the local manifold in the radial case in \cite{JiaLiuSchlagXu}. However, there is an important additional technical difficulty: in order to control the quadratic nonlinear term $\phi^3\eta^2$ in $\eta$, we need to use reversed Strichartz estimates instead of the endpoint version of the standard Strichartz estimates --- which do not hold in the nonradial case. We note that   if $\epsilon_1$ satisfies the theorem, then any smaller $\epsilon_1$ will also suffice.

\begin{proof}By the assumption that $\OR{U}$ scatters to $\phi$, there exists a free radiation $\OR{U}^L \in \HL(\R^3)$, such that
\begin{equation}\label{scatter-vp}
\lim_{t\to  \infty}\|\OR{U}(t)-(\phi,0)-
\OR{U}^L(t)\|_{\dot{H}^1\times L^2}=0.
\end{equation}
We now divide the construction of the center-stable  manifold into
the following four steps as those in \cite{JiaLiuSchlagXu}.

\smallskip

\noindent {\it \underline{Step 1}: $L^6$ decay for free waves.}  We
observe that for any finite energy free radiation $\OR{U}^L$, we
have
\begin{equation} \|U^L(t)\|_{L^6_x} \rightarrow 0 \quad\quad \text{ as }\quad t\rightarrow \infty. \label{linear-small}\end{equation}
This is a simple consequence of the dispersive estimate for smooth  
free waves, and an approximation argument.

\smallskip

\noindent {\it \underline{Step 2}: reversed space-time estimates for
the radiation term $U-\vp$.} Denote $h(t,x) =U(t,x) -\vp(x)$, then
the radiation term $h$ satisfies
\begin{equation}h_{tt}-\Delta h -V(x)h + 5\vp^4 h +N(\vp, h)=0, \label{eq-h}\end{equation}
where
\begin{equation*}
N(\vp, h) =(\vp+h)^5 -\phi^5 -5\vp^4h= 10 \vp^3h^2 + 10 \vp^2 h^3 +5
\vp h^4 +h^5.
\end{equation*}
 In what follows, we will show that
\[\|h\|_{L^{6,2}_xL^{\infty}_t\cap L^{\infty}_xL^2_t ( \R^3\times[T,\infty))} <\infty,\]
for sufficiently large $T$. 
 From Agmon's estimate in \cite{agmon}, the eigenfunctions $\{\rho_i\}_i$ decay exponentially. Decomposing
\begin{equation*}
h = \lambda_1(t)\rho_1 +\cdots + \lambda_n(t)\rho_n +\gamma,
\end{equation*}
with $\gamma\perp \rho_i$ for $ i=1,\cdots ,n$,  and plugging this
into equation (\ref{eq-h}), we obtain
\begin{align}\label{eq:parametrizedequation}
 \sum_{i=1}^n(\ddot{\lambda}_i(t) -k_i^2 \lambda_i(t))\rho_i + \ddot{\gamma} +\mathcal{L}_{\vp}\gamma = N(\vp,h),
 \end{align}
where $\mathcal{L}_{\vp}=-\Delta-V+5\phi^4$.
By orthogonality between $\gamma(t)$ and $\rho_i, i=1,\ldots, n$, we
derive the following equations for $\lambda_i(t)$ and $\gamma(t,x)$:
 \begin{equation}\label{system}\left\{\begin{aligned}
 \ddot{\lambda}_i(t) -k_i^2 \lambda_i(t) & = P_i N(\vp,h):=N_{\rho_i},\quad\quad i=1,\ldots n\\
 \ddot{\gamma} +\omega^2 \gamma &=P^{\perp} N(\vp,h) := N_c, \quad \quad\omega:=\sqrt{P^{\perp}\mathcal{L}_{\vp}}.
 \end{aligned}\right.\end{equation}

By the decay of the potential $V$ and the steady state $\vp$, we
know that $-V+5\vp^4$ in the linearized operator $\mathcal{L}_{\vp}$
decays like
   $O(\frac{1}{(1+|x|)^{\min\{\beta,4\}}})$, which is better than
the critical rate $O(\frac{1}{|x|^2})$ as $|x|\rightarrow \infty$. Hence we can  apply the result of
Proposition~$6$ in \cite{Beceanu2} and conclude that the reversed
Strichartz estimates as in Lemma~\ref{lm:RervStrichartzwithPotential} hold for solutions
    to the equation
\begin{equation}\label{eq:linearequationwithpotential}
\gamma_{tt}+\omega^2 \gamma =F,
\end{equation}
where $F$ satisfies the compatibility condition $P^{\perp}F=F$.

From (\ref{scatter-vp}) and (\ref{linear-small}), we know that
$$\lim_{T\rightarrow
\infty}\|h(t,x)\|_{L^\infty_tL^6_x([T,\infty)\times\R^3)} =0. $$
Also by the exponential decay of $\rho_i$, we   have
\[|\lambda_i(t)|=|\la \rho_i| h\ra |\leq \|\rho_i\|_{L^{\frac65}} \|h(t,x)\|_{L^6_x(\R^3)}\rightarrow 0 \hspace{1cm}\text{ as }t\rightarrow \infty .\]
Let $\Gamma(t)$ be the solution operator for the  equation
$\gamma_{tt}+\omega^2 \gamma=0$, i.e.,
\[\Gamma(t-t_0)(\gamma(t_0), \dot{\gamma}(t_0)) = \cos (\omega (t-t_0))\gamma(t_0)+\frac{1}{\omega}\sin (\omega (t-t_0))\dot{\gamma}(t_0).\]
We claim:

\begin{claim}\label{claim1}\begin{align}\label{small:linearevolt}
\lim_{T\rightarrow +\infty}\|\Gamma(t-T)(\gamma(T),
\dot{\gamma}(T))\|_{\RSTE\cap\RSTEn( \R^3\times[T,\infty))} =0.
\end{align}
\end{claim}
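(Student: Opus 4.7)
The plan is to combine the scattering hypothesis on $\OR{U}$ with the reversed Strichartz estimate of Lemma~\ref{lm:RervStrichartzwithPotential}, reducing the claim first to a fixed free wave profile and then, by density, to smooth compactly supported data on which strong Huygens in dimension three supplies genuine spatial dispersion.

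First, using~\eqref{scatter-vp} together with the boundedness of $P^\perp$ on $\HL$, we obtain $\OR{\gamma}(T)-P^\perp\OR{U}^L(T)\to 0$ in $\HL$ as $T\to\infty$. The homogeneous form of Lemma~\ref{lm:RervStrichartzwithPotential} (with $F=0$) then bounds
\[
\|\Gamma(\cdot-T)[\OR{\gamma}(T)-P^\perp\OR{U}^L(T)]\|_{\RSTE\cap\RSTEn(\R^3\times[T,\infty))}\le C\|\OR{\gamma}(T)-P^\perp\OR{U}^L(T)\|_{\HL}\to 0,
\]
reducing the claim to showing $\|\Gamma(\cdot-T)P^\perp\OR{U}^L(T)\|_{\RSTE\cap\RSTEn(\R^3\times[T,\infty))}\to 0$.

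Next, we approximate $(u_0^L,u_1^L)\in\HL$ by $(u_0^\epsilon,u_1^\epsilon)\in C_c^\infty\times C_c^\infty$ supported in $\{|x|\le R\}$ with energy gap at most $\epsilon$, and let $\OR{U}^\epsilon$ denote the corresponding free wave. Energy conservation for the free wave equation gives $\|\OR{U}^L(T)-\OR{U}^\epsilon(T)\|_{\HL}<\epsilon$ uniformly in $T$, and Lemma~\ref{lm:RervStrichartzwithPotential} again bounds the corresponding error contribution by $C\epsilon$. For the principal term, strong Huygens localizes $\OR{U}^\epsilon(T,\cdot)$ to the annulus $\{T-R\le|x|\le T+R\}$, dispersion yields the pointwise bound $|\OR{U}^\epsilon(T,x)|\lesssim T^{-1}$, and Agmon-type exponential decay of $\rho_i$ forces $|\la U^\epsilon(T),\rho_i\ra|+|\la\partial_t U^\epsilon(T),\rho_i\ra|=O(e^{-cT})$; consequently $P^\perp\OR{U}^\epsilon(T)$ agrees with $\OR{U}^\epsilon(T)$ up to exponentially small $\HL$ error, and a direct Lorentz-space computation yields $\|\OR{U}^\epsilon\|_{\RSTE\cap\RSTEn(\R^3\times[T,\infty))}=O(T^{-1/2})$ for the free wave itself. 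A Duhamel comparison between $\Gamma(\cdot-T)P^\perp\OR{U}^\epsilon(T)$ and the projected free wave $P^\perp\OR{U}^\epsilon(\cdot)$ then produces a forcing of the form $P^\perp\tilde{V}U^\epsilon$ with $\tilde{V}=-V+5\phi^4$, plus exponentially small commutator corrections from $[P^\perp,-\Delta]$. Since $\tilde{V}\in L^{3/2,1}_x$ and $U^\epsilon(s,\cdot)$ is concentrated at $|x|\sim s\ge T$ where $\tilde{V}$ is already small, we get $\|P^\perp\tilde{V}U^\epsilon\|_{\DRST(\R^3\times[T,\infty))}\to 0$, and a final application of Lemma~\ref{lm:RervStrichartzwithPotential} closes the bound.

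The hardest point is that translation invariance of the $\omega^2$-evolution forces
$\|\Gamma(\cdot-T)(f,g)\|_{\RSTE\cap\RSTEn(\R^3\times[T,\infty))}=\|\Gamma(\cdot)(f,g)\|_{\RSTE\cap\RSTEn(\R^3\times[0,\infty))}$
independently of $T$, so the uniform Strichartz bound alone produces no decay. One must genuinely exploit the spatial dispersion of the scattering profile $\OR{U}^L$---extracted via strong Huygens on a dense class of data---and transfer it to the $\omega^2$-evolution through the short-range potential $\tilde{V}$; this is precisely where the reversed Strichartz estimate of Beceanu-Goldberg, which survives nonradially, becomes essential.
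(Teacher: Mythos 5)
Your argument is correct and reproduces the paper's route through Lemma~\ref{lem-Gamma}: reduce via~\eqref{scatter-vp}, boundedness of $P^\perp$, and reversed Strichartz to the free radiation profile $\OR{U}^L$; approximate by compactly supported data; invoke strong Huygens for the $O(T^{-1/2})$ decay in $\RSTE\cap\RSTEn$; and transfer to the $\omega^2$-evolution by a Duhamel comparison with the short-range forcing $(-V+5\phi^4)U^\epsilon$. One small simplification worth noting: since $P^\perp$ commutes with $\mathcal{L}_\phi$, comparing $\Gamma(\cdot-T)P^\perp\OR{U}^\epsilon(T)$ with $P^\perp\OR{U}^\epsilon(\cdot)$ yields \emph{exactly} the forcing $P^\perp\bigl[(-V+5\phi^4)U^\epsilon\bigr]$ and nothing else, so the exponentially small ``commutator corrections from $[P^\perp,-\Delta]$'' you invoke, and the separate Agmon-decay estimate of $P_iU^\epsilon$, are not actually needed.
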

We postpone the proof of Claim~\ref{claim1} to the end  of this section. 

\medskip

Hence given a small positive number $\epsilon \ll 1$, which will be
chosen later,  we can pick a large time $T=T(\epsilon,U)$, such that
\begin{align}
\|h\|_{\st{\infty}{6}([T,\infty)\times \R^3)}& \leq \epsilon \label{h-small}\\
\|\lambda_i(t)\|_{L^\infty_t ([T,\infty))} &\leq \epsilon \hspace{1cm}    \label{small-lambda} \\
  \|\Gamma(t-T)(\gamma(T), \dot{\gamma}(T))\|_{\RSTE\cap\RSTEn( \R^3\times[T,\infty))} &\leq \epsilon. \label{small-gamma}
\end{align}
From (\ref{small-gamma}) and (\ref{small-lambda}), by the reverse Strichartz estimates in Lemma \ref{lm:RervStrichartzwithPotential}, it follows that the linear solution $h^L$ to $$\partial_{tt}h^L-\Delta h^L +5\phi^4 h^L -Vh^L=0,$$
with initial data $\OR{h}^L(T)=\OR{h}(T)$ satisfies that
\begin{equation}\label{eq:smalllinearpotentialevolution}
\left\|h^L\right\|_{\RSTE\cap\RSTEn( \R^3\times[T,\widetilde{T}))}\leq \frac{K}{2}\epsilon,
\end{equation}
if $\widetilde{T}$ is sufficiently close to $T$.
 We can then use standard perturbation arguments to show that $h\in \RSTE\cap\RSTEn( \R^3\times[T,\widetilde{T}))$ with
$$\|h\|_{\RSTE\cap\RSTEn( \R^3\times[T,\widetilde{T}))}\leq K\epsilon,$$
as long as we choose $\epsilon$ to be sufficiently small. Here we take $K$ large enough so that it dominates any constants appearing in the reverse Strichartz estimates. By a continuity argument, we shall prove that
\begin{equation}\label{continuity-hypo}
\|h\|_{\RSTE\cap\RSTEn( \R^3\times[T,\widetilde{T}))}\leq K\epsilon,
\end{equation}
for all $\widetilde{T}$, not just for $\widetilde{T}$ that are close to $T$.
Suppose that (\ref{continuity-hypo}) holds for $\widetilde{T}$, we shall show that for a small $\delta>0$,  (\ref{continuity-hypo}) holds for $\widetilde{T}+\delta$.

\begin{claim}\label{claim2}
 Let $h$ be a solution to the equation (\ref{eq-h}) with $$\|h\|_{\RSTE\cap\RSTEn( \R^3\times[T,\tilde{T}) )}\leq K\epsilon$$ and
$$\|h\|_{L^{\infty}_tL^6_x(\mathbb{R}^3\times[T,\tilde{T}))}\leq \epsilon.$$
Suppose that $K>10$.
If $\epsilon$ is sufficiently small, then for $\delta>0$ sufficiently small, we have
 \[\|h\|_{\RSTE\cap\RSTEn( \R^3\times[T,\tilde{T}+\delta) )}\leq C_1K\epsilon,\]
where $C_1$ is a constant that only depends on $V$.
 \end{claim}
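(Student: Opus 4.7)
The plan is to extend $h$ slightly past $\tilde{T}$ via local well-posedness in the energy space, then quantitatively bound its reverse Strichartz norm on the enlarged interval by decomposing $h=\sum_{i=1}^{n}\lambda_i(t)\rho_i+\gamma$ as in~\eqref{system} and estimating each piece separately. Since $\OR{h}(\tilde{T})\in \HL$ with norm controlled by the hypotheses and the conserved energy, standard local theory for equation~\eqref{eq-h} gives existence and uniqueness on $[T,\tilde{T}+\delta)$ for some $\delta>0$, and by continuity the bound $\|h\|_{L^{\infty}_tL^6_x}\le 2\epsilon$ persists on the enlarged interval. The quantitative reverse Strichartz bound will come from the Duhamel formula.

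For the discrete components, the hypothesis $\|h\|_{L^{\infty}_tL^6_x}\le 2\epsilon$ together with $\rho_i\in L^{6/5}_x$ (by Agmon exponential decay) yields the pointwise bound $|\lambda_i(t)|=|\langle\rho_i,h(t)\rangle|\le C(V)\epsilon$; using $\rho_i\in L^{6,2}_x\cap L^{\infty}_x$ this absorbs the contribution of $\sum_i\lambda_i\rho_i$ to $\|\cdot\|_{\RSTE\cap\RSTEn}$ into a term bounded by $C(V)\epsilon$. For $\gamma$, Lemma~\ref{lm:RervStrichartzwithPotential} applied on $[T,\tilde{T}+\delta)$ gives
\begin{equation*}
\|\gamma\|_{\RSTE\cap\RSTEn(\R^3\times[T,\tilde{T}+\delta))}\le C\bigl(\|\OR{\gamma}(T)\|_{\HL}+\|N(\vp,h)\|_{\DRST(\R^3\times[T,\tilde{T}+\delta))}\bigr).
\end{equation*}
The first term is bounded by the energy; everything now hinges on the nonlinear source.

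Writing $N(\vp,h)=10\vp^3h^2+10\vp^2h^3+5\vp h^4+h^5$, the decisive contribution is the quadratic one. Since $\vp(x)=O(\langle x\rangle^{-1})$, one has $\vp^3\in L^{2,r}_x(\R^3)$ for every $r\in[1,\infty]$. Writing $h^2=h\cdot h$ with one factor in $L^{6,2}_xL^{\infty}_t$ and the other in $L^{\infty}_xL^2_t$, and applying Lorentz H\"older~\eqref{eq:Ho1} in $x$ after taking $L^2_t$, respectively $L^{\infty}_t$, pointwise in $x$, gives
\begin{equation*}
\|\vp^3h^2\|_{L^{3/2,1}_xL^2_t}+\|\vp^3h^2\|_{L^{6/5,2}_xL^{\infty}_t}\le C(V)\,\|h\|_{\RSTE\cap\RSTEn}^2\le C(V)(K\epsilon)^2.
\end{equation*}
The cubic through quintic terms $\vp^2h^3$, $\vp h^4$, $h^5$ are bounded by similar Lorentz H\"older in the two reverse norms, combined with the $L^{\infty}_tL^6_x$ smallness of $h$; each yields a contribution at most $C(V)(K\epsilon)^k$ with $k\ge 3$. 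Summing shows $\|N(\vp,h)\|_{\DRST}\le C(V)(K\epsilon)^2$ once $K\epsilon$ is small.

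Collecting all estimates yields $\|h\|_{\RSTE\cap\RSTEn([T,\tilde{T}+\delta))}\le C(V)\epsilon+C(V)(K\epsilon)^2\le C_1K\epsilon$, using $K>10$ and $\epsilon$ small. The main obstacle, and precisely why the reverse Strichartz machinery is invoked here, is the quadratic term $\vp^3h^2$: standard Strichartz would require placing it in $L^1_tL^2_x$, which in turn forces the nonradial endpoint $L^2_tL^{\infty}_x$ bound on $h$ that is false in general; the reverse framework, exploiting the spatial decay $\vp^3\in L^2_x$ together with the time-space trade encoded in~\eqref{eq:Fund}, converts missing time integrability into spatial decay, after which the higher-order terms are routine.
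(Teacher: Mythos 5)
Your proposal has a genuine gap in the step where you apply Lemma~\ref{lm:RervStrichartzwithPotential} to $\gamma$ on $[T,\tilde T+\delta)$ with data posed at time~$T$. That inequality produces a bound containing the energy norm $\|\OR{\gamma}(T)\|_{\HL}$, and you say ``the first term is bounded by the energy'' --- but this energy is \emph{not} $O(\epsilon)$. The hypotheses of Claim~\ref{claim2} give smallness only in $L^{6,2}_xL^\infty_t\cap L^\infty_xL^2_t$ and in $L^\infty_t L^6_x$; the quantity $\|\OR{h}(T)\|_{\HL}$ (equivalently $\|\OR{\gamma}(T)\|_{\HL}$) is an $O(1)$ quantity fixed by the underlying solution $U$, and does not shrink as $\epsilon\to 0$. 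With this term present your final estimate reads $\|h\|_{\RSTE\cap\RSTEn}\lesssim 1 + (K\epsilon)^2$, which cannot be absorbed into $C_1K\epsilon$ for small $\epsilon$. (In the body of Step~2 the paper explicitly flags this: one must estimate the \emph{free evolution} of $(\gamma(T),\dot\gamma(T))$ in $\RSTE\cap\RSTEn$, where one has smallness from \eqref{small-gamma}, not the $\HL$ norm of the data itself --- see the footnote after \eqref{gamma}. But \eqref{small-gamma} is not part of the hypotheses of Claim~\ref{claim2}, so it cannot be invoked inside this lemma.)

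There is a second gap: you assert that ``by continuity the bound persists on the enlarged interval'' and proceed to estimate on $[T,\tilde T+\delta)$ directly, but the continuity of the map $\tilde T\mapsto\|h\|_{L^{6,2}_xL^\infty_t\cap L^\infty_xL^2_t([T,\tilde T))}$ is exactly the non-obvious point Claim~\ref{claim2} is meant to settle. Because the time supremum is taken before the Lorentz integral in $x$, the norm need not vary continuously in the endpoint $\tilde T$ for general $h\in C_t\dot H^1$, so the extension by $\delta$ must be justified. The paper's proof handles both issues simultaneously by a different route: it starts the linear comparison from time $\tilde T$ (using the Duhamel formula backward on $[T,\tilde T)$, where only the already-hypothesized reverse-Strichartz bound on $h$ enters, and no uncontrolled $\HL$ data term appears), extends the \emph{linear} solution's reverse-Strichartz bound past $\tilde T$ via approximation by smooth compactly supported data, and then adds back the discrete modes and perturbs nonlinearly on the short interval $[\tilde T,\tilde T+\delta)$. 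Your estimates on the nonlinearity $N(\varphi,h)$ via Lorentz--H\"older are in the same spirit as the paper's, but the surrounding framework does not close.
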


The claim will be proved at the end of the theorem. We note that due to the use of $L^{6,2}_xL^{\infty}_t$ type spaces, the continuity in time is not obvious. From the equation for $\lambda_i(t)$ in (\ref{system}) and the uniform bound (\ref{small-lambda}) on $\lambda_i$, we conclude
that for $t\geq T$
\begin{align*}
\lambda_i(t)= & \cosh (k_i(t-T))\lambda_i(T)+\frac{1}{k_i}\sinh(k_i(t-T)) \dot{\lambda}_i(T)\\ &+\frac{1}{k_i}\int_T^t \sinh (k_i(t-s))N_{\rho_i}(s)\, ds\\
= &
\frac{e^{k_i(t-T)}}{2}\left[\lambda_i(T)+\frac{1}{k_i}\dot{\lambda}_i(T)
+\frac{1}{k_i}\int_T^t e^{k_i(T-s)}N_{\rho_i}(s)\,
ds\right]+\mathcal{R}(t),
\end{align*}
where $\mathcal{R}(t)$ denotes a term  that remains bounded for
bounded $N_{\rho_i}(s)$. By (\ref{small-lambda}) and the above formula,
 we obtain the following stability condition
\begin{equation}\label{condition} \dot{\lambda}_i(T)= - k_i\lambda_i(T)- \int_T^\infty e^{k_i(T-s)}N_{\rho_i}(s)ds. \end{equation}
Under this condition we can rewrite equation (\ref{system}) as the
following integral equation
\begin{equation}\label{system-1}\left\{\begin{aligned}
\lambda_i(t)&=e^{-k_i(t-T)}\left[\lambda_i(T)+\frac{1}{2k_i} \int_T^\infty e^{k_i (T-s)}N_{\rho_i}(s)ds\right]-\frac{1}{2k_i}\int_T^\infty e^{-k_i|t-s|}N_{\rho_i}(s)ds \\
&=e^{-k_i(t-T)} \left[\lambda_i(T)+\frac{1}{2k_i}
\int_T^{\widetilde{T}+\delta} e^{k_i (T-s)}N_{\rho_i}(s)ds\right]
-\frac{1}{2k_i} \int_T^{\widetilde{T}+\delta} e^{-k_i|t-s|}N_{\rho_i}(s)ds \\
& \qquad \qquad +  e^{-k_i(t-T)}\frac{1}{2k_i}\int_{\widetilde{T}+\delta}^\infty e^{k_i (T-s)}N_{\rho_i}(s)ds -\frac{1}{2k_i} \int_{\widetilde{T}+\delta}^\infty  e^{-k_i|t-s|}N_{\rho_i}(s)ds,\\
 \gamma(t)&=\cos (\omega (t-T))\gamma(T)+\frac{1}{\omega}\sin
(\omega (t-T))\dot{\gamma}(T)+\int_T^t\frac{\sin
(\omega(t-s))}{\omega}N_c (s)ds.
\end{aligned}\right.\end{equation}
By (\ref{system-1}) and the reversed Strichartz estimates in Lemma
\ref{lm:RervStrichartzwithPotential}, we get that
\begin{align}
\|\lambda_i(t)\|_{L^2([T, \widetilde{T}+\delta))}\leq &C\left(
|\lambda_i(T)| + \|N_{\rho_i}\|_{L^{2}_t([T,\widetilde{T}+\delta))} +
\|N_{\rho_i}\|_{L^{\infty}_t([\widetilde{T}+\delta,\infty))}\right),
\label{lambda} \end{align}
 and
 \begin{align} &\|\gamma\|_{\RSTE\cap
\RSTEn(\R^3\times [T,\widetilde{T}+\delta))} \notag\\
\leq & C
\left(\|\Gamma(t-T)(\gamma(T), \dot{\gamma}(T))\|_{\RSTE\cap \RSTEn(
\R^3\times[T,\widetilde{T}+\delta) )} + \|N_c\|_{\DRST(
\R^3\times[T,\widetilde{T}+\delta) )}\right). \label{gamma}
\end{align}
Here the constant $C$ depends on the $L^1$ and $L^2$ integrals of
$e^{-k_it}$ and on the constants in
the reversed Strichartz estimates\footnote{Instead of estimating the
energy norm $\hl(\R^3)$ of $(\gamma(T), \dot{\gamma}(T))$ in
(\ref{gamma}), which may not be small, we estimate its free
evolution in $\RSTE\cap \RSTEn(\R^3\times [T,\widetilde{T}+\delta))$.
Consequently,  we can obtain smallness thanks to
(\ref{small-gamma}).}.

On the one hand, by the fact that $$N_{\rho_i}=\la \rho_i | N(\vp,h)\ra,
\; N_{\rho}=\sum_{i} N_{\rho_i}\, \rho_{i},\; N_c=N-N_{\rho}$$ and the
exponential decay of $\rho_i$, we have
\begin{equation}\|N_{\rho_i}\|_{L^{2}_t([T,\widetilde{T}+\delta))}+ \|N_c\|_{\DRST(\R^3\times [T,\widetilde{T}+\delta))}  \leq  C \|N(\vp,h)\|_{\DRST(\R^3\times[T,\widetilde{T}+\delta))}. \label{N-bound}\end{equation}
By the H\"older inequality in Lorentz spaces, noting that $\phi$ does not depend on time,
we have
\EQ{ \nonumber 
& \|\vp^3 h^2\|_{\DRST(\R^3\times[T,\widetilde{T}+\delta))} \\
& \lesssim \|\vp\|_{L^{6}_x}^3\left( \|h\|^2_{\RSTE(\R^3\times[T,\widetilde{T}+\delta))} + \|h\|_{\RSTE(\R^3\times[T,\widetilde{T}+\delta))} \|h\|_{\RSTEn(\R^3\times[T,\widetilde{T}+\delta))}\right),\\
& \|\vp^2 h^3\|_{\DRST(\R^3\times[T,\widetilde{T}+\delta))}\\
 & \lesssim \|\vp\|_{L^{6}_x}^2\left( \|h\|^3_{\RSTE(\R^3\times[T,\widetilde{T}+\delta))} + \|h\|^2_{\RSTE(\R^3\times[T,\widetilde{T}+\delta))} \|h\|_{\RSTEn(\R^3\times[T,\widetilde{T}+\delta))}\right),\\
}
as well as 
\EQ{\nonumber
& \|\vp h^4\|_{\DRST(\R^3\times[T,\widetilde{T}+\delta))}\\
 & \lesssim \|\vp\|_{L^{6}_x}\left( \|h\|^4_{\RSTE(\R^3\times[T,\widetilde{T}+\delta))} + \|h\|^3_{\RSTE(\R^3\times[T,\widetilde{T}+\delta))} \|h\|_{\RSTEn(\R^3\times[T,\widetilde{T}+\delta))}\right),\\
& \|h^5\|_{\DRST(\R^3\times[T,\widetilde{T}+\delta))}  \\
& \lesssim  \|h\|^5_{\RSTE(\R^3\times[T,\widetilde{T}+\delta))} +
\|h\|^4_{\RSTE(\R^3\times[T,\widetilde{T}+\delta))}
\|h\|_{\RSTEn(\R^3\times[T,\widetilde{T}+\delta))} . 
}
Consequently
\begin{equation}\label{N-12bound}
\|N_{\rho_i}\|_{L^{2}_t([T,\widetilde{T}+\delta))}+ \|N_c\|_{\DRST(
\R^3\times[T,\widetilde{T}+\delta))} \leq C \sum^{5}_{j=2} \|h\|^j_{\RSTE
\cap \RSTEn (\R^3\times[T,\widetilde{T}+\delta) )}.
\end{equation}
On the other hand, by \eqref{h-small} and the exponential decay of
$\rho_i$, we have
 \begin{align}\|N_{\rho_i}\|_{L^{\infty}_t([\widetilde{T}+\delta,\infty))} & \leq C \|\rho_i\|_{L^{6}_x(\R^3)}\|N(\vp,h)\|_{\st{\infty}{\frac{6}{5}}([\widetilde{T}+\delta,\infty)\times\R^3)}\notag \\ & \leq C \sum_{i=2}^5\|\vp\|_{L^6_x}^{5-i}\|h\|_{\st{\infty}{6}( [\widetilde{T}+\delta,\infty)\times \R^3)}^i \leq C\epsilon^2. \label{N-inftybound}\end{align}
The bounds on $\lambda_i$ and $\gamma$ imply an estimate on $h$ via 
 $$h=\sum_{i}\lambda_i\rho_i+\gamma.$$
 In fact, combining estimates
(\ref{lambda}), (\ref{gamma}), (\ref{N-bound})-(\ref{N-inftybound}),  with (\ref{small-lambda}),
(\ref{small-gamma}) and Claim~\ref{claim2}, 
one concludes that 
 \begin{align*}
  \|h\|_{L^{6,2}_xL^{\infty}\cap L^{\infty}_xL^2_t(\mathbb{R}^3\times[T,\widetilde{T}+\delta))} \leq \frac{K}{2}\epsilon  +C\left\{  \sum^{5}_{j=2}  \left(4C_1K\epsilon +\epsilon\right)^j+  \epsilon^2 \right\},  \end{align*}
here $C$ only depends on the constants in the
reversed Strichartz inequalities and  $\|\vp\|_{L^{6}_x}$ and
$\|\rho_i\|_{L^\infty_x\cap L^{6,2}_x}$.  If we choose $\epsilon\ll 1$,
which can be achieved by taking $T$ sufficiently large,  such that
\[\epsilon +\sum_{j=2}^5 (4C_1K+1)^j \epsilon^{j-1} <1,\]
say,   then it follows that 
 \begin{align}\label{h:small:RSTEcapRSTEn}\|h\|_{\RSTE\cap\RSTEn( \R^3\times[T,\widetilde{T}+\delta) )}\leq K
 \epsilon.
 \end{align}
Hence, by a standard continuity argument, we conclude that (\ref{continuity-hypo}) holds for all $\widetilde{T}>T$ and 
$$h\in L^{6,2}_xL^{\infty}_t\cap L^{\infty}_xL^2_t(\mathbb{R}^3\times[T,\,\infty)).$$

\smallskip

\noindent {\it Step 3: construction of the center-stable manifold
near a solution $U$.}  Given a finite energy solution $U$ to
(\ref{eq:mainequation}) satisfying (\ref{scatter-vp}), we consider
another finite energy solution $u$, with
$\|\OR{U}(T)-\OR{u}(T)\|_{\HL(\R^3)}$ small for a fixed large time
$T$, taken from Step 2. We write $u= U + \eta $, then $\eta$ satisfies
\[\eta_{tt}-\Delta \eta - V(x)\eta +(U+\eta)^5-U^5=0, \,\,(t,x)\in(T,\infty).\]
With $U=\vp +h$, we can rewrite the equation as
\begin{equation}\label{eta-eq}
 \eta_{tt} + \mathcal{L}_{\vp}\eta +\tilde{{N}}(\vp, h, \eta)=0, \,\,(t,x)\in(T,\infty),
\end{equation}
with $$\tilde{{N}}(\vp, h, \eta) = (\vp +h +\eta)^5- (\vp + h)^5 -
5\vp^4\eta.$$
Note that $\tilde{N}$ contains  terms which are linear
in $\eta$. However, a further inspection shows that the coefficients
of the linear terms in $\eta$ contains the factor $h$  and hence
decay in both space and time, and can be made small if we choose $T$
sufficiently large. First decompose $\eta$ as
\begin{equation}\label{eta-formula}\eta =\tl_1(t)\rho_1 +\cdots+ \tl_n(t)\rho_n +\tg, \qquad \tg\perp
\rho_i\end{equation} for $i=1,\cdots, n$. We shall use similar arguments as in step~2
to obtain a solution $\eta$ which stays small for all large,
positive times, with given $(\tl_1(T),\cdots,\tl_n(T))$ and
$(\tg,\dot{\tg})(T)$. Note that in order to determine the solution $\eta$, we still have to determine $\dot{\tl}(T)$.
  We can obtain equations for $\tl_i, \tg$ similar to (\ref{system}). Since we seek  a forward solution which grows at most polynomially, we obtain a similar  necessary and sufficient stability condition as (\ref{condition})
\begin{equation}\label{condition2} \dot{\tl}_i(T)= - k_i\tl_i(T)- \int_T^\infty e^{k_i(T-s)}\tilde{N}_{\rho_i}(s)ds.
\end{equation}
Using equations (\ref{eta-eq}) and (\ref{condition2}) we arrive at
the system of equations for $\tl_i$ and $ \tg$,
\begin{equation}\label{system-2}\left\{\begin{aligned}
\tl_i(t)&=e^{-k_i(t-T)}\left[\tl_i(T)+\frac{1}{2k_i} \int_T^\infty e^{k_i (T-s)}\tilde{N}_{\rho_i}(s)\, ds\right]-\frac{1}{2k_i}\int_T^\infty e^{-k_i|t-s|}\tilde{N}_{\rho_i}(s)\,ds \\
\tg(t)&=\cos (\omega (t-T))\tg(T)+\frac{1}{\omega}\sin (\omega
(t-T))\dot{\tg}(T)+\frac{1}{\omega}\int_T^t\sin
(\omega(t-s))\tilde{N}_c (s)\,ds.
\end{aligned}\right.\end{equation}
Define
\begin{equation} \|(\tl_1,\ldots, \tl_n, \tg)\|_{X}:= \sum_{i=1}^n\|\tl_i(t)\|_{L^\infty_t \cap L_t^2([T,  \infty))} +
 \|\tg\|_{\RSTE \cap \RSTEn (\R^3\times [T,\infty) )}.\label{define-X}\end{equation}
Estimating system (\ref{system-2}), we obtain that
  \begin{align}
\|\tl_i(t)\|_{L^{\infty}\cap L^2([T, \infty))}\lesssim & |\tl_i(T)|
+ \|\tilde{N}_{\rho_i}\|_{L^{\infty}_t\cap L^2_t([T,\infty))} \notag\\
\lesssim & |\tl_i(T)| +  \|\tilde{N}\|_{\DRST( \R^3\times[T,\infty) )}, \label{tilde-lambda} \end{align} and
\begin{align}
\|\tg\|_{\RSTE \cap \RSTEn(\R^3\times [T,\infty))}\lesssim&
\|(\tg(T), \dot{\tg}(T))\|_{\hl} + \|\tilde{N}\|_{\DRST(
\R^3\times[T,\infty) )}. \label{tilde-gamma}
\end{align}
Note that \begin{equation}\displaystyle |\tilde{N}| \lesssim  \sum_{j=1}^4
|\vp^{4-j} h^j \eta |+ \sum_{k\geq 2, i+j+k=5} |\vp^i h^j \eta^k|.\label{tN-formula}\end{equation}
For the linear term in $\eta$,  by the H\"{o}lder
inequalities in  Lorentz spaces \eqref{eq:Ho1}, we get that
\begin{align*}
\|\vp^3 h\eta\|_{\DRST( \R^3\times[T,\infty) )}& \leq  \|\vp\|_{L^6_x}^3 \|h\|_{\RSTE(\R^3\times[T,\infty) )}
\|\eta\|_{\RSTE\cap \RSTEn(\R^3\times[T,\infty) )},\\
\|\vp^2 h^2\eta\|_{\DRST( \R^3\times[T,\infty) )}& \leq
\|\vp\|_{L^6_x}^2 \|h\|^2_{\RSTE(\R^3\times[T,\infty) )}
 \|\eta\|_{\RSTE\cap \RSTEn(\R^3\times[T,\infty) )},\\
\|\vp h^3\eta\|_{\DRST( \R^3\times[T,\infty) )}& \leq
\|\vp\|_{L^6_x}\|h\|^3_{\RSTE(\R^3\times[T,\infty) )}
\|\eta\|_{\RSTE\cap \RSTEn(\R^3\times[T,\infty) )},\\
\| h^4\eta\|_{\DRST( \R^3\times[T,\infty) )}& \leq
\|h\|^4_{\RSTE(\R^3\times[T,\infty) )}\|\eta\|_{\RSTE\cap
\RSTEn(\R^3\times[T,\infty) )}.
\end{align*}
By (\ref{h:small:RSTEcapRSTEn}), we have
\begin{equation}\label{N1-estimate}
\left\|\sum_{j=1}^4 \vp^{4-j} h^j \eta\right\|_{\DRST(
\R^3\times[T,\infty) )}\leq C \epsilon  \|\eta\|_{\RSTE\cap
\RSTEn(\R^3\times[T,\infty) )}.\end{equation} The higher order terms
in $\eta$ are easier to estimate. Similar to the above, we can always estimate $h$ in
$\RSTE$, hence
\begin{align}\label{N2-estimate}
\left\| \sum_{k\geq 2, i+j+k=5}\vp^i h^j \eta^k\right\|_{\DRST(
\R^3\times[T,\infty) )}\leq C \sum_{k=2}^5\|\eta\|^k_{\RSTE \cap
\RSTEn(\R^3\times[T,\infty) )}.
\end{align}

By definition of $X$,
$\|\eta\|_{\RSTE\cap \RSTEn(\R^3\times[T,\infty) )}\leq C
\|(\tl_1, \cdots, \tl_n, \tg)\|_{X}$.
We can combine
\eqref{tilde-lambda}, \eqref{tilde-gamma} and \eqref{N1-estimate},
\eqref{N2-estimate} to get
\begin{align}\|(\tl_1,\cdots, \tl_n, \tg)\|_{X}\leq & \,  L \left(\sum_{i=1}^n |\tl_i(T)| + \|(\tg(T), \dot{\tg}(T))\|_{\hl} \right) \label{XT}
\\ &+ L\epsilon \|(\tl_1,\cdots, \tl_n, \tg)\|_{X} + L \sum_{k=2}^5\|(\tl_1,\cdots, \tl_n, \tg)\|_{X}^k, \notag\end{align}
where $L>1$ is a constant only depending on the constants in the
reversed Strichartz estimates,  $\|\phi\|_{L^6(\R^3)}$ and
$\|\rho_i\|_{L^\infty_x\cap L^{6,2}_x}$ (for convenience of later use, we will also assume $L>n$). This inequality implies
that if we take $\epsilon=\epsilon_0$ sufficiently small (which can
be achieved by choosing $T$ suitably large), 
with
\begin{equation}\label{small-data}\sum_{i=1}^n |\tl_i(T)| + \|(\tg(T), \dot{\tg}(T))\|_{\hl}\leq \epsilon_0,\end{equation}
such that $L^3\epsilon_0<\frac{1}{32}$,
then the map defined by the right-hand side of system
(\ref{system-2}) takes a ball  $B_{2L\epsilon_0}(0)\subseteq X$ into itself. Moreover, we can check by the same argument
that this map is in fact
 a contraction on $B_{2L\epsilon_0}(0)\subseteq X$. Thus for any given small $(\tl_1(T), \cdots \tl_n(T),
\tg(T))$ satisfying (\ref{small-data}), we obtain a unique fixed
point of (\ref{system-2}). It follows that
\begin{equation}
u(t,x):=U(t,x) +\sum_{i=1}^k \tl_i(t)\rho_i+\tg(t,x) \label{u-construction}
\end{equation}
 solves (\ref{eq:mainequation}) on $\R^3\times [T,\infty)$, satisfying
\begin{equation}\|\OR{u}-\OR{U}\|_{L_t^\infty( [T,\infty);\hl)}
\lesssim \sum_{i=1}^n |\tl_i(T)| + \|(\tg(T), \dot{\tg}(T))\|_{\hl} \label{epsilon-difference}\end{equation}
 with Lipschitz dependence on the data $\tl_i(T)$ and $(\tg(T),\dot{\tg}(T))$.
By the smoothness of the nonlinearity $\tilde{N}$, the integral
terms in (\ref{system-2}) depend on $\tl_i, \tg$ smoothly. Hence
$\tl_i(t),\tg(t,x)$ and the solution $u(t,x)$ actually have smooth
dependence on the data.

\smallskip
\noindent {\it Step 4: Proof of scattering.}  In this step, we prove that  the solution $\OR{u}$ constructed in step 3  scatters to the same steady state $(\phi, 0)$ as $\OR{U}$. 

For each solution $\OR{u}$ with the decomposition  (\ref{u-construction}) and any time $T'\geq T$,  we   denote 
\begin{equation} \|(\tl_1,\ldots, \tl_n, \tg)\|_{X[T',\infty)}:= \sum_{i=1}^n\|\tl_i(t)\|_{L^\infty_t \cap L_t^2([T',  \infty))} +
 \|\tg\|_{\RSTE \cap \RSTEn (\R^3\times [T',\infty) )}.\end{equation} 
Here $X[T,\infty)$ is the space $X$ from step~3, and from the construction we know that $$\|(\tl_1,\cdots, \tl_n, \tg)\|_{X[T,\infty)} <2L\epsilon_0 <\frac{1}{16}$$
 We will show that  $\|(\tl_1,\cdots, \tl_n, \tg)\|_{X[T',\infty)} \rightarrow 0$ as  $T'\rightarrow \infty$.

We shall need the following property of the linear evolution, which will be proved towards the end of this section: 
\begin{claim}\label{claim3}  For  $(f_0, f_1)\in P^\perp(\dot{H}^1\times L^2)$,  denote \[f(t,x) = \cos (\omega t)f_0+\frac{1}{\omega}\sin (\omega t
)f_1,\] then we have 
\begin{equation} 
\lim_{T_0\rightarrow \infty}\|f(t,x)\|_{\RSTE\cap\RSTEn( \R^3\times[T_0,\infty) )}= 0. 
\label{largeT}\end{equation}
And there exists a free wave $f^L(t,x)$ with data $\OR{f}^{L}(0)\in \HL$,  such that 
\begin{equation}
\lim_{t\rightarrow +\infty}\|\OR{f}(t,x) - \OR{f}^L(t,x)\|_{\HL} =0. \label{f-scatter}
\end{equation}
\end{claim}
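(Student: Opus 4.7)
The plan is to establish \eqref{largeT} first and then derive \eqref{f-scatter} from it via a standard Cook/Duhamel scattering argument.

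For \eqref{largeT}, the strategy is a density reduction combined with dispersive decay for smooth data. By Lemma~\ref{lm:RervStrichartzwithPotential} applied on $[T_0,\infty)$ with initial data $\OR{f}(T_0)$ and zero forcing, the map $(f_0,f_1)\mapsto f|_{[T_0,\infty)\times\R^3}$ is bounded from $P^{\perp}\HL$ into $\RSTE\cap\RSTEn$ uniformly in $T_0\ge 0$, since $\|\OR{f}(T_0)\|_{\HL}=\|(f_0,f_1)\|_{\HL}$ by energy conservation on $P^{\perp}$. Hence it suffices to verify the limit on a dense subclass, for which I would take $\{P^{\perp}(\tilde f_0,\tilde f_1):\tilde f_0,\tilde f_1\in C_c^{\infty}(\R^3)\}$; this set is dense in $P^{\perp}\HL$ because $P^{\perp}$ is bounded on $\HL$ and $C_c^{\infty}$ is dense in $\HL$. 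For a nice datum $(g_0,g_1)$ from this class, the corresponding solution $g$ inherits pointwise dispersive decay $|g(t,x)|\lesssim \langle t\rangle^{-1}$ (available for $V\in Y$ with no zero resonance or eigenvalue in the Beceanu--Goldberg framework underlying Lemma~\ref{lm:RervStrichartzwithPotential}), together with a quasi-Huygens confinement of $g$ to a thickened light cone $\{\,||x|-t|\lesssim R\,\}$ up to rapidly decaying tails. A direct computation then yields $\|g\|_{\RSTE\cap\RSTEn([T_0,\infty))}\lesssim T_0^{-1/2}$, and the uniform bound above converts this into \eqref{largeT}.

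For \eqref{f-scatter}, since $P^{\perp}$ commutes with $-\Delta-V$, on $P^{\perp}\HL$ the equation $\partial_{tt}f+\omega^2 f=0$ is equivalent to $(\partial_t^2-\Delta)f=Vf$, so the free-wave Duhamel formula yields
\[
\OR{f}(t)=S_0(t)(f_0,f_1)+\int_0^t S_0(t-s)(0,Vf(s))\,ds,
\]
where $S_0(t)$ denotes the free wave flow. Set $\OR{g}(t):=S_0(-t)\OR{f}(t)$; then
\[
\OR{g}(t_2)-\OR{g}(t_1)=\int_{t_1}^{t_2}S_0(-s)(0,Vf(s))\,ds,
\]
and by the free-wave dual Strichartz estimate combined with H\"older in Lorentz spaces (using $V\in Y\subset L^{3/2,1}_x$),
\[
\|\OR{g}(t_2)-\OR{g}(t_1)\|_{\HL}\lesssim \|Vf\|_{\DRST([t_1,t_2])}\lesssim \|V\|_{L^{3/2,1}_x}\bigl(\|f\|_{\RSTE([t_1,t_2])}+\|f\|_{\RSTEn([t_1,t_2])}\bigr),
\]
which tends to $0$ as $t_1\to\infty$ by \eqref{largeT}. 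Hence $\OR{g}(t)$ is Cauchy in $\HL$ and converges to some $\OR{f}^L(0)$; defining $\OR{f}^L(t):=S_0(t)\OR{f}^L(0)$ gives $\|\OR{f}(t)-\OR{f}^L(t)\|_{\HL}=\|\OR{g}(t)-\OR{f}^L(0)\|_{\HL}\to 0$.

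The main obstacle lies in the decay step of \eqref{largeT}: Lemma~\ref{lm:RervStrichartzwithPotential} alone only yields a uniform bound, since the $\HL$ energy is conserved, so the actual decay has to come from quantitative pointwise information on the propagator $e^{\pm it\omega}$. Establishing such pointwise dispersion together with a quasi-Huygens localization for smooth continuous-spectrum data is the non-trivial input, and it relies on the Beceanu-type kernel estimates for wave equations with short-range potentials in three dimensions.
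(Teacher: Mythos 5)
Your treatment of \eqref{f-scatter} is essentially the paper's argument (Cook's method through the Duhamel formula relative to the free flow, using the decay of $\|f\|_{\RSTE\cap\RSTEn([t_1,\infty))}$ established in \eqref{largeT} and H\"older in Lorentz spaces); one small slip: on $P^\perp$ one has $\omega^2 f = (-\Delta - V + 5\phi^4)f$, so the forcing in the free Duhamel formula is $(V-5\phi^4)f$, not $Vf$, though this does not affect the structure since $5\phi^4$ has the same decay as $V$. The density reduction at the start of \eqref{largeT} is also correct and matches the paper.

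The gap is in the decay step for \eqref{largeT}, and it is a genuine one: you propose to deduce $\|g\|_{\RSTE\cap\RSTEn([T_0,\infty))}\lesssim T_0^{-1/2}$ for nice data from pointwise decay $|g(t,x)|\lesssim\langle t\rangle^{-1}$ together with a ``quasi-Huygens confinement'' of $g$ near $|x|\approx t$. But $g$ solves the perturbed equation $\partial_{tt}g + \omega^2 g = 0$, and the strong Huygens principle fails for the wave equation with a nontrivial potential; the scattered wave has tails inside the light cone, and there is no reason the $L^{6,2}_xL^\infty_t$ mass concentrates near $|x|\approx t$. Your concluding remark does flag the decay as ``the non-trivial input,'' but it misattributes the missing ingredient: what is needed is not Huygens localization for $e^{\pm it\omega}$ (which does not hold), but a weighted dispersive bound. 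The paper's route is different and does close: it invokes the interpolated Beceanu--Goldberg estimate
\[
\bigl\|\cos(t\omega)g_0+\omega^{-1}\sin(t\omega)g_1\bigr\|_{L^{6,\infty}_xL^\infty_t([T_0,\infty))}\lesssim T_0^{-1/2}\bigl(\|\Delta g_0\|_{L^1}+\|\nabla g_1\|_{L^1}\bigr),
\]
which encodes the $T_0^{-1/2}$ gain directly for the potential evolution without any cone localization, couples it with the trivial decay of $\|g\|_{L^\infty_xL^2_t([T_0,\infty))}$ by time-integrability, and then runs a Duhamel comparison $g(t)=g^L(t)-\int_t^\infty \frac{\sin((t-s)|\nabla|)}{|\nabla|}\,(V-5\phi^4)g(s)\,ds$ against a genuine free wave $g^L$ (for which Huygens \emph{is} legitimate and which is handled by Lemma~\ref{lem-Gamma}). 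You would need to replace your Huygens step by this interpolation estimate and the Duhamel comparison, or find a substitute weighted bound, to make the argument sound.
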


Using (\ref{largeT}) in Claim~\ref{claim3},  for the $\epsilon_0$ chosen in step 3, we can take $T_1>T$ large enough such that 
\begin{align}
\left\|e^{-k_i(t-T)}\tl_i(T)\right\|_{L^\infty_t \cap L_t^2([T_1,  \infty))} & < \epsilon_0^2, \label{T1-lambda}\\ 
\left\|\cos (\omega (t-T))\tg(T)+\frac{1}{\omega}\sin (\omega
(t-T))\dot{\tg}(T)\right\|_{_{\RSTE\cap\RSTEn( \R^3\times[T_1,\infty) )}} & <  \epsilon_0^2. \label{T1-gamma} \end{align}
We control the system (\ref{system-2}) on the interval $[T_1, \infty)$ in the following fashion:  we estimate the linear part on the interval $[T_1,\infty)$ using (\ref{T1-lambda})(\ref{T1-gamma}), and then estimate the nonlinear (integral) term over the larger interval  $[T,\infty)$. This yields 
 \begin{align}
\left\|\tl_i(t)\right\|_{L^{\infty}\cap L^2([T_1, \infty))}\lesssim &\left\|e^{-k_i(t-T)}\tl_i(T)\right\|_{L^\infty_t \cap L_t^2([T_1,  \infty))} 
+\left \|\tilde{N}\right\|_{\DRST( \R^3\times[T,\infty) )}, \label{T1-tilde-lambda}  \\
\|\tg\|_{\RSTE \cap \RSTEn(\R^3\times [T,\infty))}\lesssim&
\left\|\cos (\omega (t-T))\tg(T)+\frac{1}{\omega}\sin (\omega
(t-T))\dot{\tg}(T)\right\|_{{\RSTE\cap\RSTEn( \R^3\times[T_1,\infty) )}}  \notag \\ &+ \|\tilde{N}\|_{\DRST(
\R^3\times[T,\infty) )}. \label{T1-tilde-gamma}
\end{align}
Combing these estimates with (\ref{tN-formula}), (\ref{N1-estimate}), (\ref{N2-estimate}), we infer that (notice we assumed $L>n$)
\begin{align*} & \|(\tl_1,\cdots, \tl_n, \tg)\|_{X[T_1,\infty)}\\
&\leq  \, (n+1) \epsilon_0^2 + L\epsilon_0 \|(\tl_1,\cdots, \tl_n, \tg)\|_{X[T,\infty)} + L \sum_{k=2}^5\|(\tl_1,\cdots, \tl_n, \tg)\|_{X[T,\infty)}^k, \\ 
&\leq  L \epsilon_0^2  + 2L^2\epsilon_0^2 + L \sum_{k=2}^5 (2L\epsilon_0)^k < 2L (2L\epsilon_0)^2.
\end{align*} 
Next,  fix our choice of $T_1$ and rewrite system (\ref{system-2}) by breaking the integral into finer pieces,
\begin{equation}\label{system-4}\left\{\begin{aligned}
\tl_i(t) 
= & e^{-k_i(t-T)}\left[\tl_i(T)+\frac{1}{2k_i} \int_T^{T_1} e^{k_i (T-s)}\tilde{N}_{\rho_i}(s)\, ds\right]-\frac{1}{2k_i}\int_T^{T_1} e^{-k_i|t-s|}\tilde{N}_{\rho_i}(s)\,ds \\
& + e^{-k_i(t-T_1)} \frac{1}{2k_i} \int_{T_1}^\infty e^{k_i (T_1-s)}\tilde{N}_{\rho_i}(s)\, ds -\frac{1}{2k_i}\int_{T_1}^\infty e^{-k_i|t-s|}\tilde{N}_{\rho_i}(s)\,ds, \\
\tg(t)=&\cos (\omega (t-T))\tg(T)+\frac{1}{\omega}\sin (\omega
(t-T))\dot{\tg}(T)+\frac{1}{\omega}\int_T^{T_1}\sin
(\omega(t-s))\tilde{N}_c (s)\,ds \\
 & + \frac{1}{\omega}\int_{T_1}^t\sin
(\omega(t-s))\tilde{N}_c (s)\,ds.
\end{aligned}\right.\end{equation}
We can pick $T_2>T_1$ large enough such that the first line in the expression of $\tl_i$ is small in $L^\infty_t \cap L_t^2([T_2,  \infty))$, 
 also the first line in the expression of $\tg$ is small in ${\RSTE\cap\RSTEn( \R^3\times[T_2,\infty) )}$. We can require that they are bounded by $\epsilon_0^3$.   Note that we used Claim~\ref{claim3} for the term $\frac{1}{\omega}\int_T^{T_1}\sin
(\omega(t-s))\tilde{N}_c (s)\,ds $, which can be viewed as a superposition of linear evolutions.

Then estimating the second line of $\tl_i$ and $\tg$ over the larger interval $[T_1,\infty)$,  we obtain 
\begin{align*} & \|(\tl_1,\cdots, \tl_n, \tg)\|_{X[T_2,\infty)}\\
&\leq  \, (n+1)  \epsilon_0^3 +L\left(  \epsilon_0 \|(\tl_1,\cdots, \tl_n, \tg)\|_{X[T_1,\infty)} +  \sum_{k=2}^5\|(\tl_1,\cdots, \tl_n, \tg)\|_{X[T_1,\infty)}^k \right), \\ 
&\leq    2L(2L\epsilon_0)^3.
\end{align*} 
It is clear that this process can be repeated indefinitely:  once we fix $T_j$, we can rewrite the system  (\ref{system-2}) as in (\ref{system-4}), and find $T_{j+1} >T_j$ such that the first line  is bounded by $\epsilon_0^{j+1}$,  which implies the estimate 
\[\|(\tl_1,\cdots, \tl_n, \tg)\|_{X[T_{j+1},\infty)} \leq  2L(2L\epsilon_0)^{j+1}.\]
In view of (\ref{eta-formula}), (\ref{tN-formula}), (\ref{N1-estimate}), (\ref{N2-estimate}) we conclude that 
\begin{align*}
\lim_{T'\rightarrow +\infty}\|(\tl_1,\cdots, \tl_n, \tg)\|_{X[T',\infty)}  & =0, \\
\lim_{T'\rightarrow \infty}\|\eta\|_{\RSTE\cap\RSTEn( \R^3\times[T',\infty) )} & =   0, \\
\lim_{T'\rightarrow \infty} \|\tilde{N}\|_{\DRST( \R^3\times[T',\infty) )} &= 0.
\end{align*} 
These asymptotics allow us to write  the asymptotic profile of $\tilde{\gamma}$  in the form 
\[\tilde{\gamma}_\infty(t)=\cos (\omega (t-T))\tg(T)+\frac{1}{\omega}\sin (\omega
(t-T))\dot{\tg}(T)+\frac{1}{\omega}\int_T^\infty\sin
(\omega(t-s))\tilde{N}_c (s)\,ds, \] with the property that
\[\|\OR{\tilde{\gamma}} -\OR{\tilde{\gamma}}_\infty\|_{L^\infty_t \dot{H}\times L^2 [T',\infty)}\lesssim   \|\tilde{N}\|_{\DRST( \R^3\times[T',\infty) )}\rightarrow 0 \hspace{1cm} \text{ as }T'\rightarrow +\infty.\]
$\tilde{\gamma}_\infty(t)$ can be further replaced with a free wave by (\ref{f-scatter}) in Claim~\ref{claim3}.  
Combing the preceding with the fact
$\|\tl_i(t)\|_{L^\infty_t \cap L_t^2([T',  \infty))}\rightarrow 0$  as $T'\rightarrow +\infty$, we  conclude that 
$\OR{u}$   scatters to the same steady state  $(\phi, 0)$ as $\OR{U}$.  We can now define
\begin{equation}
\Psi: \overrightarrow{U}(T)+\left(B_{\epsilon_0}((0,0))\cap
X_{cs}\right)\longrightarrow \dot{H}^1\times L^2,
\end{equation}
as follows: for any $(\tg_0,\tg_1)\in
P^{\perp}\left(\HL(\R^3)\right)$   and $\tl_i\in \R$ such that
\begin{equation*}
\mathcal{\xi}:=\sum_{i=1}^n\tl_i(\rho_i,-k_i\rho_i)+(\tg_0,\tg_1)+
\overrightarrow{U}(T)\in
\overrightarrow{U}(T)+\left(B_{\epsilon_0}((0,0))\cap X_{cs}\right),
\end{equation*}
set
\begin{equation*}
\tl_i(T)=\tl_i, \,\,\,{\rm for\,\,}i=1,\dots,n \,\,\,{\rm
and}\,\,(\tg(T),\dot{\tg}(T))=(\tg_0,\tg_1).
\end{equation*}
Then with $\dot{\tl}_i(T)$ given by  (\ref{condition2}), we define
\begin{equation*}
\Psi(\mathcal{\xi}):=\left(\sum_{i=1}^n\tl_i(T)\rho_i+\tg_0,\,\,\sum_{i=1}^n\dot{\tl}_i(T)\rho_i+\tg_1\right)
+\OR{U}(T).
\end{equation*}
If $\epsilon_0$ is chosen sufficiently small, then $\dot{\tl}_i$ is
uniquely determined by contraction mapping in the above. We define
$\widetilde{\mathcal{M}}$ as the graph of $\Psi$ and let
$\mathcal{M}$ be $\OR{S}(-T)(\widetilde{\mathcal{M}})$. We can then
check that $\Psi,\,\mathcal{M},\,\widetilde{\mathcal{M}}$ verify the
requirements of the theorem. Since $\OR{S}(T)$ is a diffeomorphism,
$\mathcal{M}$ is a $C^1$ manifold.

\smallskip

\noindent {\it Step 5: unconditional uniqueness.} Now suppose that a
solution $u$ to equation (\ref{eq:mainequation})  satisfies
 \[\|\OR{u}-\OR{U}\|_{L^\infty( [0,\infty);\hl)}\leq\epsilon_1\ll\epsilon_0.\]
 
 We need to show that $\OR{u}(T)\in \widetilde{\mathcal{M}}$. We denote
 \[\eta(t,x) = u(t,x)-U(t,x) = \sum_{i=1}^n \tl_i(t)\rho_i +\tg(t,x),\]
 then  $\OR{\eta}\in L^\infty_t([0,\infty);\hl)$ with norm smaller than $\epsilon_1$.
Using similar arguments as in {\it Step 2}, we can conclude that for sufficiently large $T$ and $\widetilde{T}$ which is bigger than but close to $T$,
 \begin{align}
& \|\tl_i(t)\|_{L_t^\infty([T,\infty))} +  \|\vec{\tg}(t,x)\|_{ L_t^\infty( [T,\infty);\hl) }\leq C \epsilon_1,\label{delta-small}\\
& \left\|\tl_i(t)\right\|_{L^2([T,\,\widetilde{T}))}\leq C\epsilon_1, \notag \\
&  \left\|\tg\right\|_{ \RSTE\cap \RSTEn( \R^3\times[T,\tilde{T}) )}\leq C\epsilon_1. \notag
 \end{align}
 Notice the $L^\infty$ bound on $\tl_i$ implies that the stability condition   (\ref{condition2}) must hold true,  we are again reduced to (\ref{system-2}).
 Now we wish to show that
 \begin{align}
 \tl_i(t)\in L^2([T,\infty)), \; \tg(t,x)\in \RSTE \cap \RSTEn (\R^3\times[T,\infty) ), \label{uncond unique}
 \end{align}
with a small norm,
which together with the fixed point theorem imply
$\OR{u}(T)\in\widetilde{\mathcal{M}}$. Pulling back from $T$ to $0$,
we can obtain the desired result. To show \eqref{uncond unique}, we follow
similar arguments as in step 2. Define the norm
\begin{align*}
\|(\tl_1,\cdots, \tl_n, \tg)\|_{X([T,\tilde{T}))}:&=
\sum_{i=1}^n\|\tl_i(t)\|_{L_t^2([T, \tilde{T}))} + \|\tg\|_{\RSTE
\cap \RSTEn (\R^3\times[T,\tilde{T}) )}.
\end{align*}
Similar to (\ref{lambda}), (\ref{gamma}), (\ref{N1-estimate}) and
(\ref{N2-estimate}), we get
\begin{align*}
& \sum_{i=1}^n\|\tl_i(t)\|_{L^2([T, \tilde{T}))} +
\|\tg\|_{\RSTE\cap
\RSTEn( \R^3\times[T,\tilde{T}))}\\
\leq &\;C\left(\sum_{i=1}^n |\tl_i(T)| + \|(\tg(T),
\dot{\tg}(T))\|_{\hl} +  \|\tilde{N}\|_{\DRST( \R^3\times[T,\tilde{T}))} +
\|\tilde{N}\|_{L^{\infty}_tL^{\frac{6}{5}}_x([\tilde{T},\infty))}\right)\\
\leq&\; C \epsilon_1+ C \epsilon_0 \|\eta\|_{\RSTE\cap \RSTEn(
\R^3\times[T,\tilde{T}))} +C \sum_{k=2}^5\|\eta\|^k_{\RSTE \cap
\RSTEn(\R^3\times[T,\tilde{T}))} \\
& \quad\;\;\; + C \sum_{i+j+k=5, k\geq
1}\|\vp\|_{L^6_x}^i\|h\|^j_{L^\infty_t L^6_x(
[\tilde{T},\infty))}\|\eta\|^k_{L^\infty_tL^6_x(
[\tilde{T},\infty))} \\
\leq&\; C \epsilon_1+ C \epsilon_0 \|\eta\|_{\RSTE\cap \RSTEn(\R^3\times
[T,\tilde{T}))} +C \sum_{k=2}^5\|\eta\|^k_{\RSTE \cap
\RSTEn(\R^3\times[T,\tilde{T}))},\end{align*} 
where the constant $C$ may change from line
to line. Hence by \eqref{delta-small}, we have
\[\|(\tl_1,\cdots, \tl_n, \tg)\|_{X([T,\tilde{T}))} \leq C \epsilon_1 + C\epsilon_0 \|(\tl_1,\cdots, \tl_n, \tg)\|_{X([T,\tilde{T}))}+L \sum_{k=2}^5\|(\tl_1,\cdots, \tl_n, \tg)\|_{X[T,\tilde{T})}^k.\]
By a continuity argument similar to the one used in {\it Step 2}, we can conclude that
\[\|(\tl_1,\cdots, \tl_n, \tg)\|_{X([T,\infty))} \leq \liminf_{\tilde{T}\rightarrow \infty}\|(\tl_1,\cdots, \tl_n, \tg)\|_{X([T,\tilde{T}))}\leq C \epsilon_1 <\epsilon_0,\]
We omit the routine details.  
 \end{proof}

Now we give the proof for Claim~\ref{claim1}.
Claim~\ref{claim1} will be
proved as a consequence of the following lemma.
\begin{lemma}\label{lem-Gamma}
Let $\overrightarrow{U}^L$ be a  finite energy free radiation
and $(\phi, 0)$ be a steady state to equation
(\ref{eq:mainequation}). Recall that
\[\omega =\sqrt{P^{\perp}(-\Delta - V +5\phi^4)}.\]
Let $\gamma$ be the solution to
\begin{equation}
\left\{\begin{aligned} \partial_{tt}\gamma +\omega^2 \gamma &=0,
\hspace{1cm} \text{ in } [T,\infty)\times \R^3,
\\
\overrightarrow{\gamma}(T) & = P^{\perp} (\overrightarrow{U}^L(T)).
\end{aligned}\right.
\end{equation}
For any $\epsilon>0$, if we take $T=T(\epsilon,
\overrightarrow{U}^L)>0$ sufficiently large, then
\begin{equation}
 \|\gamma\|_{\RSTE
\cap \RSTEn (\R^3\times[T,\infty) )}<\epsilon.
\end{equation}
\end{lemma}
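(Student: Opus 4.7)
The strategy is to compare $\gamma$ to the \emph{free-wave} evolution $\psi$ of the same initial data $P^\perp\overrightarrow{U}^L(T)$, and to exploit the asymptotic smallness of finite-energy free waves in the reverse Strichartz norms restricted to large times. First I would prove that for any finite-energy free wave $v$ on $\R^3$,
\[
\|v\|_{\RSTE \cap \RSTEn([T,\infty)\times\R^3)} \longrightarrow 0\quad\text{as } T\to\infty.
\]
By the free reverse Strichartz estimate (the $V=0$ case of Lemma~\ref{lm:RervStrichartzwithPotential}), the left-hand side is uniformly controlled by $\|\overrightarrow{v}(0)\|_{\HL}$, so by density it suffices to consider initial data in $C_c^\infty\times C_c^\infty$. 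For such data Huygens' principle localizes $v(t,\cdot)$ to the annulus $\bigl||x|-t\bigr|\le R$, and the 3D dispersive bound gives $|v(t,x)|\lesssim (1+t)^{-1}$; direct computation then yields the claimed vanishing.

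Next I would decompose $P^\perp\overrightarrow{U}^L(T) = \overrightarrow{U}^L(T) - \sum_{i=1}^n (a_i(T)\rho_i, b_i(T)\rho_i)$, where $a_i(T) = \langle U^L(T),\rho_i\rangle$ and $b_i(T) = \langle \partial_t U^L(T),\rho_i\rangle$. The $L^6$-decay $\|U^L(T)\|_{L^6}\to 0$ of finite-energy free waves together with H\"older in $L^6\cdot L^{6/5}$ gives $|a_i(T)|\to 0$; an approximation by compactly supported smooth data combined with the Agmon exponential decay of $\rho_i$ handles $|b_i(T)|\to 0$. The free wave $\psi$ on $[T,\infty)$ with data $P^\perp\overrightarrow{U}^L(T)$ then splits as $U^L|_{[T,\infty)} - \sum_i \Psi_i$, where $\Psi_i$ is the free evolution of $(a_i\rho_i, b_i\rho_i)$. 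The first piece is small by the previous step, and each $\Psi_i$ is small by the free reverse Strichartz bound applied to data of small energy; hence $\|\psi\|_{\RSTE \cap \RSTEn([T,\infty))}\to 0$.

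With $\psi$ in hand, I would prove the identity
\[
 \partial_{tt}(\gamma - P^\perp\psi) + \omega^2(\gamma - P^\perp\psi) = P^\perp(V-5\phi^4)\psi,
\]
with vanishing initial position and velocity at $t=T$, by computing $\omega^2 P^\perp\psi = P^\perp\mathcal{L}_\phi\psi$ via the eigenvalue relation $\mathcal{L}_\phi\rho_i = -k_i^2\rho_i$; the leading $\Delta\psi$ cancels and only the short-range piece $(V-5\phi^4)\psi$ survives after projection. Applying Lemma~\ref{lm:RervStrichartzwithPotential} (extended to this $\omega^2$ via Proposition~6 of \cite{Beceanu2}) together with Lorentz-space H\"older estimates, using that $V-5\phi^4 \in L_x^{3/2,1}\cap L_x^{3/2,\infty}$ (which follows from $V\in Y$ and the $1/|x|$ decay of $\phi$), yields
\[
\|\gamma - P^\perp\psi\|_{\RSTE \cap \RSTEn([T,\infty))} \leq C\,\|\psi\|_{\RSTE \cap \RSTEn([T,\infty))}.
\]
Finally, $\psi - P^\perp\psi = \sum_i \langle\rho_i,\psi(t,\cdot)\rangle\rho_i$ has small $\RSTE\cap\RSTEn$ norm because $\langle\rho_i,\psi(t)\rangle$ is small in $L^\infty_t$ (controlled by $\|\psi\|_{L^\infty_tL^6_x}$) and in $L^2_t$ (controlled by $\|\psi\|_{L^\infty_xL^2_t}$ together with $\rho_i \in L^1$ via Minkowski), so the triangle inequality concludes.

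The main obstacle is the algebraic identity for $\gamma - P^\perp\psi$: the naive difference $\gamma-\psi$ produces an uncontrollable source involving $\Delta\psi$, since $\psi$ does not remain in the range of $P^\perp$ under free evolution. Subtracting $P^\perp\psi$ instead and carefully tracking how $\omega^2$ and $P^\perp$ act on the eigenfunctions is what produces the clean short-range source $P^\perp(V-5\phi^4)\psi$, on which the perturbative bound can close.
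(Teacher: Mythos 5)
Your proof is correct and follows essentially the same strategy as the paper: establish the vanishing of finite-energy free waves in $\RSTE\cap\RSTEn$ over $[T,\infty)$ via strong Huygens plus density, then view $\gamma$ as a perturbation of a projected free evolution with a short-range source $(V-5\phi^4)\cdot(\text{free wave})$, closed by the reversed Strichartz estimate and Lorentz H\"older. The only organizational difference is that the paper approximates $U^L$ by a smooth compactly supported $\widetilde{U}^L$ at the outset and compares $\gamma$ with $P^{\perp}\widetilde{U}^L$ (yielding a difference with small but nonzero data at $t=T$), whereas you compare $\gamma$ directly with $P^{\perp}\psi$ for the free evolution $\psi$ of $P^{\perp}\OR{U}^L(T)$ (yielding zero initial data for the difference, at the cost of a short extra argument that the discrete coefficients $a_i(T),b_i(T)$ vanish and that $\psi - P^{\perp}\psi$ is small in $\RSTE\cap\RSTEn$); both computations produce the identical source $P^{\perp}((V-5\phi^4)\psi)$ through the same cancellation $\Delta\psi + \mathcal{L}_\phi\psi = (-V+5\phi^4)\psi$.
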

\begin{proof} For a given $\epsilon>0$, fix $0<\delta \ll \epsilon$ to be determined below. We can take a smooth compactly supported (in space) free radiation $\overrightarrow{\widetilde{U}}^L$ such that
\begin{equation}
\|\overrightarrow{U}^L(0)-\overrightarrow{\widetilde{U}}^L(0)\|_{\hl(\R^3)}
\leq \delta.  \label{UU-small}
\end{equation}
Let us assume that ${\rm supp}(\overrightarrow{\widetilde{U}}^L(0))
\Subset B_R(0)$ for some $R>0.$ Hence by the strong Huygens' principle,
for large time $t$ we have
\begin{equation*}
|\widetilde{U}^L(t,x)|\leq \frac{C}{t}\chi_{[t-R\leq|x|\leq t+R]},\,\,\, {\rm
for}\,\, t>R.
\end{equation*}
Now for $T\gg R$, by direct computation we get that
\begin{eqnarray*}
&&\|\widetilde{U}^L(t,x)\|_{L^{6,2}_xL^{\infty}_t(\mathbb{R}^3\times [T,\infty))}\\
&&\lesssim \left\|\frac{1}{t}\chi_{[t-R\leq |x|\leq t+R]}\right\|_{L^{6,2}_xL_t^{\infty}(\mathbb{R}^3\times[T,\infty))}\\
&&\lesssim \left\|\frac{1}{|x|-R}\chi_{[|x|>T-R]}\right\|_{L^{6,2}_x}\\
&&\lesssim \left\|\frac{1}{|x|}\chi_{[|x|>\frac{T}{2}]}\right\|_{L^{6,2}_x}\lesssim \frac{1}{\sqrt{T}}.
\end{eqnarray*}
Similarly,
\begin{eqnarray*}
&&\left\|\widetilde{U}^L(t,x)\right\|_{L^{\infty}_xL^2_t(\mathbb{R}^3\times[T,\infty))}\\
&&\lesssim  \left\|\frac{1}{t}\chi_{[t-R\leq |x|\leq t+R]}\right\|_{L^{\infty}_xL^{2}_t(\mathbb{R}^3\times[T,\infty))}\\
&&\lesssim \left\|\left(\frac{1}{|x|^2}\chi_{[|x|>T-R]}\cdot R\right)^{1/2}\right\|_{L^{\infty}_x}\lesssim_R \frac{1}{T}.
\end{eqnarray*}
Hence   
\begin{equation} \lim_{T\rightarrow \infty}\|\widetilde{U}^L\|_{\RSTE
\cap \RSTEn (\R^3\times[T,\infty) )}=0 \label{small-tildeU}.\end{equation}

Since $\overrightarrow{\widetilde{U}}^L$ is a free
radiation, we see that
\begin{equation}
\partial_{tt} {\widetilde{U}}^L -\Delta  {\widetilde{U}}^L - V  {\widetilde{U}}^L + 5\phi^4  {\widetilde{U}}^L = - V  {\widetilde{U}}^L +5\phi^4  {\widetilde{U}}^L, \quad\text{ in } (0,\infty)\times \R^3. \label{eq:tildeU}
\end{equation}
 By the decay property of $V, \, 5\phi^4$  
 and (\ref{small-tildeU}), simple calculations show that
 \[\lim_{T\rightarrow \infty } \|- V {\widetilde{U}}^L +5\phi^4 {\widetilde{U}}^L\|_{\DRST ( \R^3 \times [T,\infty))} =0.\]
 Choose $T$ sufficiently large, such that
 \begin{align}
 \|\widetilde{U}^L\|_{\RSTE
\cap \RSTEn (\R^3\times[T,\infty) )} \leq & \delta, \label{small-UT}\\  \|- V  {\widetilde{U}}^L +5\phi^4  {\widetilde{U}}^L\|_{\DRST ( \R^3 \times [T,\infty))} \leq & \delta.\label{RHS-small}
 \end{align}
 Note that $\overrightarrow{v}:= \overrightarrow{\gamma} - P^{\perp}\overrightarrow{\widetilde{U}}^L$ solves
 \[\partial_{tt} v +\omega^2\, v = - P^{\perp}\left(  - V  {\widetilde{U}}^L +5\phi^4  {\widetilde{U}}^L \right), \quad (t,x)\in [T, \infty)\times \R^3, \]
 with initial data $\overrightarrow{v}(T)= P^{\perp}\left(
 \overrightarrow{U}^L(T) - P^{\perp}\overrightarrow{\widetilde{U}}^L(T)\right)$. \footnote{By definition, it is clear that $P^{\perp}$ is bounded in $L^{\frac{6}{5},2}_xL^{\infty}_t\cap L^{\frac{3}{2},1}_xL^2_t$.} It is clear from  the bounds (\ref{UU-small}) and  energy conservation for the free radiation that $$\left\|\OR{v}(T)\right\|_{\HL}\leq C\delta.$$
By (\ref{RHS-small})  and reversed Strichartz estimates from Lemma \ref{lm:RervStrichartzwithPotential}, we can conclude that
 \begin{equation}\|v\|_{\RSTE
\cap \RSTEn (\R^3\times[T,\infty) )}\leq C\delta. \label{small-v}\end{equation}
 Combining bounds (\ref{small-v}) and (\ref{small-UT}), and fixing $\delta$ small, the lemma is proved.
\end{proof}

Now the proof of Claim~\ref{claim1} is easy. Note that due to the
fact that
\[\lim_{T\rightarrow \infty} \|\overrightarrow{U}(T) - (\phi,0) -\overrightarrow{U}^L(T)\|_{\hl(\R^3)} =0,\]
we see that the initial data for $\gamma$ satisfies
\[\lim_{T\rightarrow \infty} \|\overrightarrow{\gamma}(T)-P^{\perp}\overrightarrow{U}^L(T)\|_{\hl(\R^3)} =0.\]
Hence  Claim~\ref{claim1} follows from the above lemma and reversed Strichartz
estimates. 

\medskip

\noindent
{\it Proof of Claim \ref{claim2}}: From the bound
$$\|h\|_{\RSTE\cap \RSTEn (\R^3\times[T,\widetilde{T}) )}\leq K\epsilon,$$
we check as in the proof of Theorem \ref{th:localmanifold} that
\begin{equation}\label{eq:estimateonf11}
\|f\|_{L^{\frac{6}{5},2}_xL^{\infty}_t\cap L^{\frac{3}{2},1}_xL^{2}_t(\R^3\times[T,\widetilde{T}))}\lesssim K^5\epsilon^2,
\end{equation}
where $f=N(\phi,h)$. $h$ satisfies
$$\partial_{tt}h-\Delta h-Vh+5\phi^4h+f=0,$$
and thus $\tilde{h}:=P^{\perp}h$ satisfies
$$\partial_{tt}\tilde{h}-\Delta \tilde{h}-V\tilde{h}+5\phi^4\tilde{h}+P^{\perp}f=0.$$
By reverse Strichartz estimates and the estimates (\ref{eq:estimateonf11}) on $f$, we conclude that the solution $\tilde{h}^L$ to
$$\partial_{tt}\tilde{h}^L-\Delta \tilde{h}^L-V\tilde{h}^L+5\phi^4\tilde{h}^L=0$$
with $\OR{\tilde{h}}^L(\widetilde{T})=P^{\perp}(\OR{h}(\widetilde{T}))$ satisfies that
$$\left\|\tilde{h}^L-\tilde{h}\right\|_{\RSTE\cap \RSTEn (\R^3\times[T,\widetilde{T})) }\leq CK^5\epsilon^2,$$
and hence
$$\left\|\tilde{h}^L\right\|_{\RSTE\cap \RSTEn (\R^3\times[T,\widetilde{T})) }\leq C_0K\epsilon+CK^5\epsilon^2.$$
Using approximation by smooth and compactly supported data, it is easy to show that there exists sufficiently small $\delta>0$ such that
$$\left\|\tilde{h}^L\right\|_{\RSTE\cap \RSTEn (\R^3\times[T,\widetilde{T}+\delta)) }\leq C_0K\epsilon+2CK^5\epsilon^2.$$
Hence, by taking $\delta$ smaller if necessary so that the growth of the unstable modes can be controlled, we can conclude that the solution $h^L$ to
$$\partial_{tt}h^L-\Delta h^L-Vh^L+5\phi^4h^L=0$$
with $\OR{h}^L(\widetilde{T})=\OR{h}(\widetilde{T})$ satisfies that
$$\left\|h^L\right\|_{\RSTE\cap \RSTEn (\R^3\times[\widetilde{T},\widetilde{T}+\delta)) }\leq C_0K\epsilon+\epsilon+4CK^5\epsilon^2.$$
Then by a standard perturbation argument, we see that if $\epsilon$ is sufficiently small, then
$$\left\|h\right\|_{\RSTE\cap \RSTEn (\R^3\times[\widetilde{T},\widetilde{T}+\delta)) }\leq C_0K\epsilon+\epsilon+8CK^5\epsilon^2.$$
Combining the above with estimates of $h$ on the interval $[T,\widetilde{T})$ and choosing $C_1\gg C_0$, the claim is proved.

\medskip

\noindent
{\it Proof of Claim \ref{claim3}}:  From  the proof    of Lemma~\ref{lem-Gamma}, we know that for free wave $U^L$ with smooth compactly supported data, we have
\begin{equation}\label{free62}
\lim_{T_0\rightarrow \infty} \|U^L\|_{\RSTE
\cap \RSTEn (\R^3\times[T_0,\infty) )}=0.
\end{equation}
Then by approximation, (\ref{free62}) holds true for any free wave with finite energy.   

Now let  $f(t,x)$ be a solution to the equation (recall $\omega^2 =P^{\perp}(-\Delta -V+5\phi^4)$)
\begin{equation}
\left\{\begin{aligned} \partial_{tt}f +\omega^2 f &=0,
\hspace{2cm} \text{ in } [0,\infty)\times \R^3,
\\
\overrightarrow{u}(0) & = (f_0, f_1)\in P^\perp(\dot{H}^1\times L^2) .\end{aligned}\right.
\end{equation}
For any given $\epsilon>0$, we first take smooth  and compactly supported  data $(\tilde{f}_0, \tilde{f}_1)$ such that \[\|(f_0, f_1) - (\tilde{f}_0, \tilde{f}_1)\|_{\HL} \lesssim \epsilon,\]
which further implies 
\[\|(f_0, f_1) - P^{\perp} (\tilde{f}_0, \tilde{f}_1)\|_{\HL} \leq    \|(f_0, f_1) - (\tilde{f}_0, \tilde{f}_1)\|_{\HL} \lesssim \epsilon.\]

We take $ g(t,x)$ to be the solution to the equation
\begin{equation}\label{ttheta}
\left\{\begin{aligned} \partial_{tt}g +\omega^2 g &=0,
\hspace{2cm} \text{ in } [0,\infty)\times \R^3,
\\
\OR{g}(0) & =  (g_0, g_1) : = P^{\perp} (\tilde{f}_0, \tilde{f}_1).\end{aligned}\right.
\end{equation}
From Strichartz estimates, we have  $g\in \RSTE
\cap \RSTEn (\R^3\times[0,\infty) ) $.

Let us recall  an  estimate from the proof of~\cite[Corollary 2]{Beceanu2}  (page 27 in the journal version)  which   is slightly stronger than the estimate stated in the main result~\cite[Corollary 2]{Beceanu2}.  Notice that it in fact follows from interpolation between the bounds in~\cite[Theorem 1]{Beceanu2}.
For  $0\leq \theta_1, \theta_2 \leq 1  $ and $\theta_1+\theta_2\leq 1, $
\begin{equation} \left\|t^{1-\theta_1-\theta_2} \left(\cos (t\omega)g_0  +  \frac{\sin t\omega}{\omega} g_1\right)\right\|_{(\mathcal{K}^{\theta_2})^*_xL^\infty_t} \lesssim \|\Delta g_0\|_{\mathcal{K}^{\theta_1}} +\|\nabla g_1\|_{\mathcal{K}^{\theta_1}}
 \label{Interpolation}\end{equation}
 It is not necessary for us to give the detailed definition of $\mathcal{K}^{\theta}$ and $(\mathcal{K}^\theta)^*$,  as we only need the embedding property 
 \[L^{\frac{3}{3-\theta},1}\subset \mathcal{K}^{\theta},  \hspace{1cm}  (\mathcal{K}^\theta)^* \subset L^{3/\theta,\infty}.\]
Hence we can take $ \theta_2=\frac12$ and   $\theta_1=0,  $  and  obtain the estimate we need, viz. 
\begin{equation} \| \cos (t\omega)g_0  +  \frac{\sin t\omega}{\omega} g_1\|_{L^{6,\infty}_x L^\infty_t[T_0,\infty)}\lesssim T_0^{-\frac12}\left(\|\Delta g_0\|_{L^{1}} +\|\nabla g_1\|_{L^{1}}\right)
 \label{Interpolation-2}\end{equation}
 Notice that eigenfunctions $\rho_i$ to $\LL_{\phi}=-\Delta -V +5\phi^4$ decay exponentially and $\rho_i\in W^{2,p}, 1\leq p \leq \infty$.  
 Together with the fact $(\tilde{f}_0, \tilde{f}_1)$  is  smooth and compactly supported and $ (g_0, g_1)  = P^{\perp} (\tilde{f}_0, \tilde{f}_1)$,  
 we have $\Delta g_0, \nabla g_1\in {L^{1}}.$

Define the matrix operator 
\[J(t) =\begin{bmatrix} \cos (t|\nabla|)& |\nabla|^{-1}\sin (t|\nabla|)\\  -|\nabla| \sin(t|\nabla|) & \cos(t|\nabla|)\end{bmatrix},\]
and consider the free wave $g^L(t,x)$  with   the initial data 
\begin{equation}\begin{bmatrix} g^L(0) \\ g^L_t(0)\end{bmatrix} =  \begin{bmatrix} g_0 \\  g_1\end{bmatrix} + \int_0^\infty J(-s)\begin{bmatrix} 0 \\   (V-5\phi^4)g(s) \end{bmatrix} ds. \label{thetaLdata}\end{equation}
We wish to compare $g$ and $g^L$.
By the decay property of $V, \, 5\phi^4$ and  the Strichartz estimate (\ref{eq:strichartzwithpotential}), we know the integral term in (\ref{thetaLdata}) converges in $\HL$.

Then we have 
\[\begin{bmatrix} g(t) \\ g_t(t)\end{bmatrix} - \begin{bmatrix} g^L(t) \\ g^L_t(t)\end{bmatrix} = - \int_t^\infty J(t-s)\begin{bmatrix} 0 \\   (V-5\phi^4)g(s) \end{bmatrix} ds.\]
In particular
\begin{equation}g(t) = g^L(t) - \int_t^\infty \frac{\sin ((t-s)|\nabla|)}{|\nabla|}  ( (V-5\phi^4)g(s) ) ds. \label{TTL}\end{equation}
Since $g\in  \RSTEn (\R^3\times[0,\infty) )$, by continuity of the norm in the time variable, we have  $\|g\|_{  \RSTEn (\R^3\times[T_0,\infty) )}\rightarrow 0$ as $T_0\rightarrow \infty$. Together with the fact $V-5\phi^4 \in L^{\frac{3}{2},1}$ and from H\"{o}lder, we obtain 
\[ \|(V-5\phi^4)g\|_{L^{3/2,1}_xL^{2}_t  ( \R^3 \times [T_0,\infty))}\rightarrow  0 \hspace{1cm} \text{ as } T_0\rightarrow \infty.\]
From  (\ref{Interpolation-2}), we also have 
\begin{align*}
\|(V-5\phi^4)g\|_{L^{6/5,2}_xL^{\infty}_t( \R^3 \times [T_0,\infty))} \lesssim \|V-5\phi^4\|_{L^{\frac32, 2}_x} \|g\|_{L^{6,\infty}_xL^\infty_t[T_0,\infty)} \rightarrow 0,  \hspace{0.2cm} T_0\rightarrow +\infty.
\end{align*}
 Now we can apply the Strichartz estimate (\ref{eq:strichartzwithpotential})  to (\ref{TTL}) which implies
\begin{align*} & \|g\|_{\RSTE
\cap \RSTEn (\R^3\times[T_0,\infty) )}\\ 
& \lesssim  \|g^L\|_{\RSTE
\cap \RSTEn (\R^3\times[T_0,\infty) )} +  \|(V-5\phi^4)g\|_{\DRST ( \R^3 \times [T_0,\infty))}\\
&\longrightarrow  0 \hspace{1cm } \text{ as } T_0\rightarrow +\infty. 
\end{align*}

Hence we can pick $T_*$ large enough such that  $ \|g\|_{\RSTE
\cap \RSTEn (\R^3\times[T_0,\infty) )} <\epsilon$ for $T_0>T_*$.  Combining this with the difference estimate 
\[\|f(t,x) -g(t,x)\|_{\RSTE
\cap \RSTEn (\R^3\times[T_0,\infty) )}\lesssim \|(f_0,f_1) -(g_0, g_1)\|_{\HL}\lesssim \epsilon,\]
we get
\[\|f\|_{\RSTE
\cap \RSTEn (\R^3\times[T_0,\infty) )} <\epsilon, \hspace{1cm} \text{ for } T_0>T_*. \]
We have proved (\ref{largeT}).

In a similar fashion, we  consider  the  free wave $f^L(t,x)$  with   the initial data 
\begin{equation}\begin{bmatrix} f^L(0) \\ f^L_t(0)\end{bmatrix} =  \begin{bmatrix} f_0 \\  f_1\end{bmatrix} + \int_0^\infty J(-s)\begin{bmatrix} 0 \\   (V-5\phi^4)f(s) \end{bmatrix} \, ds. \end{equation}
We know  the integral term here converges in $\HL$ and 
\begin{equation}f(t) = f^L(t) - \int_t^\infty \frac{\sin ((t-s)|\nabla|)}{|\nabla|}  ( (V-5\phi^4)f(s) ) \, ds. \end{equation}
Now that we have already proved $\|f\|_{\RSTE
\cap \RSTEn (\R^3\times[T_0,\infty) )} \rightarrow 0 $ as $T_0\rightarrow +\infty,$  we can apply Strichartz to obtain 
\begin{align*}\|\OR{f}(t,x) -\OR{f}^L(t,x)\|_{\HL} &\lesssim \|(V-5\phi^4)f(s,x)\|_{L^{6/5,2}_xL^{\infty}_s\cap L^{3/2,1}_xL^{2}_s ( \R^3 \times [t,\infty))} \\
&\lesssim \|V-5\phi^4\|_{L^{\frac32,1}_x} \|f(s,x)\|_{L^{6,2}_xL^{\infty}_s\cap L^{\infty}_xL^{2}_s ( \R^3 \times [t,\infty))}\\
&\rightarrow  0  \hspace{1cm} \text{ as } t\rightarrow +\infty. 
 \end{align*}
 This establishes~(\ref{f-scatter}).
 
 \begin{remark} Due to the near optimal decay assumption on our potential~$V$, we can not apply the structure formula from~\cite{BS} to obtain scattering for solutions to the wave equation with potential.   The proof above seems to provide a new perspective:  scattering to a free wave occurs because the potential term becomes negligible for large times. This insight requires the use of reverse Strichartz estimates. 
   \end{remark}

\section{Channel of energy inequality}\label{sec:4}

In this section, we first prove the channel of energy estimate for solutions to the linear wave equation with potential if the initial data has a dominating  discrete mode. Then we show this estimate also holds for equation (\ref{eq:mainequation}) as long as the initial data is small enough.    Finally, for data which has a nontrivial but not dominant  discrete mode, we
 prove a growth lemma which ensures that  once we require the initial data to be sufficiently small, we can find a large time at which the solution is still small and the  discrete mode becomes dominant.

For the following basic perturbation result, we refer the reader to~\cite[Lemma 2.1]{JiaLiuXu} for   proof. 

\begin{lemma}\label{lem:perturbation}
Let $0\in I\subset \R$ be an interval of time. Suppose
$\tilde{u}(t,x)\in C_t(I,\dot{H}^1(\R^3))$ with
$\|\tilde{u}\|_{L^5_tL^{10}_x(I\times \R^3)}\leq M<\infty$,
$\|a\|_{L^{5/4}_tL^{5/2}_x(I\times \R^3)}\leq \beta<\infty$ and
$e(t,x),\,f(t,x)\in L^1_tL^2_x(I\times \R^3)$, satisfy
\begin{equation}
\partial_{tt}\tilde{u}-\Delta \tilde{u}+a(t,x)\tilde{u}+\tilde{u}^5=e,
\end{equation}
with initial data
$\overrightarrow{\tilde{u}}(0)=(\tilde{u}_0,\tilde{u}_1)\in
\dot{H}^1\times L^2$. Suppose for some sufficiently small positive
$\epsilon<\epsilon_0=\epsilon_0(M,\beta)$,
\begin{equation}
\||e|+|f|\|_{L^1_tL^2_x(I\times
\R^3)}+\|(u_0,u_1)-(\tilde{u}_0,\tilde{u}_1)\|_{\dot{H}^1\times
L^2}<\epsilon.
\end{equation}
Then there is a unique solution $u\in C(I,\dot{H}^1)$ with
$\|u\|_{L^5_tL^{10}_x(I\times \R^3)}<\infty$, satisfying the
equation
\begin{equation}
\partial_{tt}u-\Delta u+a(t,x)u+u^5=f,
\end{equation}
with initial data $\OR{u}(0) =(u_0,u_1)$.
Moreover, we have the following estimate
\begin{equation}
\sup_{t\in
I}\|\overrightarrow{u}(t)-\overrightarrow{\tilde{u}}(t)\|_{\dot{H}^1\times
L^2}+\|u-\tilde{u}\|_{L^5_tL^{10}_x(I\times
\R^3)}<C(M,\beta)\epsilon.
\end{equation}
\end{lemma}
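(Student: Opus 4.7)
The plan is a standard Strichartz bootstrap/contraction argument on short time intervals, followed by iteration. Setting $w := u - \tilde u$, the difference formally satisfies
\begin{equation*}
\partial_{tt} w - \Delta w + a(t,x) w + \sum_{k=1}^{5} \binom{5}{k} \tilde u^{5-k} w^k = f - e, \qquad \vec w(0) = (u_0-\tilde u_0,\,u_1-\tilde u_1).
\end{equation*}
First I would partition $I$ into finitely many subintervals $I_1, \dots, I_N$ with $N = N(M,\beta)$ so that on each $I_j$ both $\|\tilde u\|_{L^5_tL^{10}_x(I_j\times\R^3)}$ and $\|a\|_{L^{5/4}_tL^{5/2}_x(I_j\times\R^3)}$ are less than a small absolute constant $\delta_0$, chosen in terms of the Strichartz constant for the free wave equation in $\R^3$.

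On each subinterval, I would apply the standard Strichartz estimate for the free wave equation. Using H\"older, the terms linear in $w$ are bounded by
\begin{equation*}
\|aw + 5\tilde u^4 w\|_{L^1_tL^2_x(I_j)} \leq \bigl(\|a\|_{L^{5/4}_tL^{5/2}_x(I_j)} + 5\|\tilde u\|_{L^5_tL^{10}_x(I_j)}^4\bigr)\|w\|_{L^5_tL^{10}_x(I_j)},
\end{equation*}
so by the smallness of these norms on $I_j$ the linear contributions can be absorbed into the left-hand side. The higher-order terms $\tilde u^{5-k}w^k$ for $2\le k\le 5$ are estimated by $C\|\tilde u\|_{L^5_tL^{10}_x(I_j)}^{5-k}\|w\|_{L^5_tL^{10}_x(I_j)}^k$, and are handled via a contraction/bootstrap provided $\|w\|_{L^5_tL^{10}_x(I_j)}$ stays small.

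Starting from $\|\vec w(0)\|_{\dot H^1 \times L^2} < \epsilon$ on $I_1$, the Strichartz estimate together with $\||e|+|f|\|_{L^1_tL^2_x} < \epsilon$ yields $\|w\|_{L^5_tL^{10}_x(I_1)} + \|\vec w\|_{L^\infty_t(\dot H^1\times L^2)(I_1)} \leq C\epsilon$. Passing from $I_j$ to $I_{j+1}$ multiplies the accumulated bound by a constant depending only on the Strichartz constant and $\delta_0$; after $N$ iterations one obtains the final estimate $\leq C(M,\beta)\epsilon$ on $I$. The threshold $\epsilon_0(M,\beta)$ must be small enough that at each step the quadratic-and-higher terms in $w$ remain negligible so the bootstrap closes.

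The main technical nuisance, rather than a genuine obstacle, is the combinatorial bookkeeping ensuring that constants do not blow up over the $N$ iterations; this is routine once the interval partition is fixed. Existence of the perturbed solution $u$ on $I$ itself follows from the contraction argument together with standard local well-posedness for the potential-free problem, since $aw$ and $a\tilde u$ are treated perturbatively via the same H\"older bound. No new ideas beyond the arguments already used in~\cite{JiaLiuXu} are required, which is why I would simply adapt that proof.
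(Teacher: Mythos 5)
Your proposal is correct and is the standard argument for this type of long-time perturbation lemma. Note that the paper does not actually supply its own proof here --- it simply refers to \cite[Lemma~2.1]{JiaLiuXu} --- so there is nothing to diff against in the text; your outline (subdivide $I$ so that $\|\tilde u\|_{L^5_tL^{10}_x}$ and $\|a\|_{L^{5/4}_tL^{5/2}_x}$ are small on each piece, write the equation for $w=u-\tilde u$, absorb the linear-in-$w$ terms $aw$ and $5\tilde u^4 w$ via H\"older into $L^1_tL^2_x$ and Strichartz, close the higher-order terms by contraction, and iterate over $N=N(M,\beta)$ pieces, shrinking $\epsilon_0(M,\beta)$ so the geometric accumulation of constants stays controlled) is precisely the argument that the cited reference carries out, with the only substantive check being that the H\"older exponents $1=\tfrac45+\tfrac15$, $\tfrac12=\tfrac25+\tfrac1{10}$ match, which they do.
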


We also need the   following result on the precise asymptotics of eigenfunctions corresponding to negative eigenvalues of the Schr\"{o}dinger operator $-\Delta-V$, which is a consequence of Theorem~4.2 in  Meshkov~\cite{Meshkov}. 

\begin{lemma} 
\label{Meshkov-lemma}
Let $V$ satisfy $\displaystyle{\sup_{x\in \R^3} (1 + |x|)^{\beta} |V (x)| <\infty }$ for some $\beta >2$, and suppose that $\rho\not\equiv0$
 is an eigenfunction corresponding to the eigenvalue $-k^2$   of   $-\Delta - V$. Then there exists $f \in L^2(\mathbb{S}^{2})$ which does not vanish identically, such that
  \begin{equation}
\rho(x) = e^{-k|x|}|x|^{-1}\left(f(\frac{x}{|x|})+\omega(x)\right),\label{eq:rho-expansion}
\end{equation}
where $\omega(x)$ satisfies
\begin{equation} \int_{\mathbb{S}^2}|\omega(R\theta)|^2\, d\sigma(\theta) = O(R^{-\frac12} ),  \hspace{1cm}  \text{as }  R \rightarrow +\infty. \label{eq:omega} \end{equation}
\end{lemma}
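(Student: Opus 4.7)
The plan is to derive the claimed asymptotic expansion directly from the resolvent representation of the eigenfunction, and then invoke Meshkov's Theorem~4.2 to ensure that the leading angular profile is not identically zero.

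First I would rewrite the eigenvalue equation as $(-\Delta + k^2)\rho = V\rho$ and use the Green's function $G_k(z) = e^{-k|z|}/(4\pi|z|)$ of $-\Delta + k^2$ on $\R^3$ to obtain the representation
$$\rho(x) = \int_{\R^3} \frac{e^{-k|x-y|}}{4\pi |x-y|}\, V(y)\, \rho(y)\, dy.$$
Agmon's exponential decay bound from \cite{agmon}, bootstrapped once through this identity, yields the pointwise bound $|\rho(x)| \lesssim (1+|x|)^{-1} e^{-k|x|}$, which is the decay required to pass to sharp asymptotics.

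Next I would extract the leading behavior by splitting the integral into an inner region $|y| \leq |x|/2$ and an outer region $|y| > |x|/2$. On the inner region, the Taylor expansion $|x-y| = |x| - \hat x\cdot y + O(|y|^2/|x|)$ with $\hat x = x/|x|$ gives
$$\frac{e^{-k|x-y|}}{|x-y|} = \frac{e^{-k|x|}}{|x|}\, e^{k\hat x\cdot y}\bigl(1 + O(|y|/|x|)\bigr),$$
so the inner contribution has the form $(4\pi|x|)^{-1} e^{-k|x|}\bigl(f(\hat x) + \omega_1(x)\bigr)$, with the angular profile
$$f(\hat x) := \frac{1}{4\pi}\int_{\R^3} e^{k\hat x\cdot y}\, V(y)\, \rho(y)\, dy,$$
absolutely convergent because $V \in Y$ decays polynomially and $\rho$ decays like $e^{-k|y|}/(1+|y|)$. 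The outer region is handled by combining the polynomial decay of $V$ (at rate $\beta > 2$) with the exponential decay of $\rho$; a Cauchy--Schwarz argument in the tail, where the $|x-y|^{-1}$ singularity of the kernel costs a half-power of $R$, produces an $L^2(\mathbb{S}^2)$-remainder of order $R^{-1/2}$ at radius $R$, matching the bound claimed for $\omega$.

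The main obstacle is proving that $f \not\equiv 0$, which is the content of Meshkov's Theorem~4.2. Indeed, if $f \equiv 0$, the Green's function representation together with the expansion above would force $\rho$ to decay strictly faster than $e^{-k|x|}/|x|$, i.e., to exhibit super-exponential decay. Meshkov's quantitative unique continuation result, which applies precisely because $V$ decays faster than the critical rate $|x|^{-2}$ (our assumption $\beta > 2$), then forces $\rho \equiv 0$, contradicting the nontriviality of the eigenfunction. Combining $f \not\equiv 0$ with the remainder estimate from the previous step yields the lemma in the stated form.
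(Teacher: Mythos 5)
The paper does not prove this lemma at all: it simply records it as ``a consequence of Theorem~4.2 in Meshkov~\cite{Meshkov}'', which furnishes the full asymptotic expansion in one step. Your proposal instead tries to re-derive the expansion from the Birman--Schwinger/Green's function identity $\rho(x)=\int G_k(x-y)V(y)\rho(y)\,dy$, $G_k(z)=e^{-k|z|}/(4\pi|z|)$, and then to invoke Meshkov only for the nonvanishing of the leading angular profile $f$. The general strategy (inner/outer splitting, Taylor expanding $|x-y|=|x|-\hat x\cdot y+O(|y|^2/|x|)$) is sound, but as written the argument has two real gaps.

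First, the assertion that Agmon's bound ``bootstrapped once'' through the integral identity gives $|\rho(x)|\lesssim(1+|x|)^{-1}e^{-k|x|}$ is not correct. Agmon gives $|\rho(y)|\lesssim_\epsilon e^{-(k-\epsilon)|y|}$, and a single pass through the Green's function equation only gains a polynomial factor $(1+|x|)^{-\beta}$; it does not close the exponential gap $\epsilon$. To reach the sharp rate one needs a Gr\"onwall-type iteration, for instance controlling $m(R)=\sup_{r\le R}(1+r)e^{kr}\sup_{|x|=r}|\rho(x)|$, using that $\beta>2$ makes the Gr\"onwall kernel $(1+s)^{-\beta}$ integrable and small at large radii. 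This is not deep, but it is more than one bootstrap, and it must be in place before one can even define $f(\hat x)=\frac{1}{4\pi}\int e^{k\hat x\cdot y}V(y)\rho(y)\,dy$ as an absolutely convergent integral.

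Second, and more seriously, the argument for $f\not\equiv 0$ conflates two very different decay statements. If $f\equiv 0$, the remainder bound (\ref{eq:omega}) only gives $\rho\sim e^{-k|x|}|x|^{-5/4}$ in an angular $L^2$ sense, which is ``strictly faster than $e^{-k|x|}/|x|$'' but is nowhere near super-exponential; the leap from the former to the latter is exactly the delicate iterative/Carleman content of Meshkov's theorem, not a consequence of the expansion you set up. As written the proof either assumes the hard part silently or is circular, since it ends up invoking precisely the theorem whose conclusion it is trying to reproduce. Given that Meshkov's Theorem~4.2 is going to be cited anyway, the cleanest course is the paper's: cite it for the full statement (\ref{eq:rho-expansion})--(\ref{eq:omega}) and dispense with the intermediate derivation.
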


An important observation in~\cite{DKM3} is that the above precise asymptotics implies the following
channel of energy inequality for the associated linear wave equation.

\begin{lemma}\label{lem:linear-channel}
Let $V$ satisfy $\displaystyle{\sup_{x\in \R^3} (1 + |x|)^{\beta} |V (x)| <\infty }$ for some $\beta >2$, and suppose that $\rho\not\equiv0$
is an eigenfunction corresponding to the eigenvalue $-k^2$   of the operator $-\Delta - V$. Suppose that $u$ solves the equation
\[u_{tt}-\Delta u -Vu=0\]
with $\OR{u}(0) = \mu^+(\rho, k\rho)$, then for any $R>0$ the following channel of energy estimate holds for some constant $c(\rho, V,R)>0$
\begin{equation}
  \int_{|x|\geq t+R}  |\partial_t u|^2(x,t)\, dx \geq c(\rho, V,R) |\mu^+|^2, \hspace{1cm} \text{ for } t\geq 0. \label{eq:linear-channel}
\end{equation}
Similarly,  if $\OR{u}(0)=\mu^- (\rho, -k\rho)$,  then
\begin{equation}
 \int_{|x|\geq |t|+R}  |\partial_t u|^2(x,t)\, dx \geq c(\rho, V,R) |\mu^-|^2,  \hspace{1cm} \text{ for } t\leq 0. \label{eq:linear-channel-2}
\end{equation}
\end{lemma}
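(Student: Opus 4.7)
The plan is to exploit the fact that, because $\rho$ is an eigenfunction of $-\Delta-V$ with eigenvalue $-k^2$, the Cauchy problem for the linear wave equation with data of the form $\mu^+(\rho,k\rho)$ has an explicit solution. Indeed, seeking a solution of the form $u(t,x)=a(t)\rho(x)$ reduces the equation $u_{tt}-\Delta u - Vu=0$ to $\ddot a=k^2 a$, and the initial conditions $a(0)=\mu^+$, $\dot a(0)=k\mu^+$ pick out the pure exponentially growing mode $a(t)=\mu^+e^{kt}$. Thus
\[
u(t,x)=\mu^+\,e^{kt}\rho(x),\qquad \partial_t u(t,x)=k\mu^+\,e^{kt}\rho(x),
\]
so the channel estimate \eqref{eq:linear-channel} reduces to showing the purely spatial lower bound
\[
e^{2kt}\int_{|x|\ge t+R}\rho(x)^2\,dx\ \ge\ c(\rho,V,R)>0,\qquad t\ge 0.
\]

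The core of the argument is an asymptotic computation based on Lemma~\ref{Meshkov-lemma}. Writing $\rho(x)=e^{-k|x|}|x|^{-1}(f(x/|x|)+\omega(x))$, polar coordinates give
\[
\int_{|x|\ge r_0}\rho^2\,dx=\int_{r_0}^\infty e^{-2kr}\!\int_{\mathbb{S}^2}\bigl(f(\theta)+\omega(r\theta)\bigr)^2 d\sigma(\theta)\,dr.
\]
Expanding the square and using $\int_{\mathbb{S}^2}|\omega(r\theta)|^2 d\sigma=O(r^{-1/2})$ together with Cauchy--Schwarz on the cross term, one obtains
\[
\int_{\mathbb{S}^2}\bigl(f+\omega(r\theta)\bigr)^2 d\sigma=\|f\|_{L^2(\mathbb{S}^2)}^2+O(r^{-1/4}).
\]
Since $f\not\equiv 0$ by Meshkov's lemma, there exists $r_*=r_*(\rho,V)$ such that this angular integral is $\ge \tfrac12\|f\|_{L^2}^2$ for $r\ge r_*$. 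Therefore, whenever $t+R\ge r_*$,
\[
e^{2kt}\int_{|x|\ge t+R}\rho^2\,dx\ \ge\ e^{2kt}\cdot\tfrac{\|f\|_{L^2}^2}{2}\int_{t+R}^\infty e^{-2kr}dr\ =\ \tfrac{\|f\|_{L^2}^2}{4k}\,e^{-2kR},
\]
which is the desired lower bound in this regime.

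For the remaining case $t+R\le r_*$ (relevant only when $R$ is small and $t$ is bounded), I would invoke positivity: $\rho\not\equiv 0$ on $\{|x|\ge r_*\}$ (since the leading term $e^{-k|x|}|x|^{-1}f(x/|x|)$ already does not vanish identically there), so $I(r):=\int_{|x|\ge r}\rho^2\,dx$ is strictly positive for every $r\ge 0$ and continuous and non-increasing in $r$. Hence $\inf_{0\le s\le r_*}I(s)=I(r_*)>0$, and for $t+R\le r_*$ we bound $e^{2kt}\int_{|x|\ge t+R}\rho^2\,dx\ge I(r_*)>0$. Combining the two regimes gives a uniform lower bound $c(\rho,V,R)>0$ and hence \eqref{eq:linear-channel}. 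For the $\mu^-$ case, the analogous computation gives $u(t,x)=\mu^-e^{-kt}\rho(x)$ and the substitution $s=-t\ge 0$ reduces \eqref{eq:linear-channel-2} to the previous estimate; this symmetry is essentially time-reversal. The main (and really only) technical point to watch is ensuring that the error contribution from $\omega$ is dominated by the $\|f\|_{L^2}^2$ main term; this is guaranteed by the decay \eqref{eq:omega} once $t+R$ is chosen above a threshold depending only on $\rho$ and $V$.
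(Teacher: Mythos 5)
Your proof is correct and follows essentially the same route as the paper's: both exploit the explicit exponential solution $u(t,x)=\mu^\pm e^{\pm kt}\rho(x)$ and Meshkov's asymptotics (Lemma~\ref{Meshkov-lemma}) to reduce the channel estimate to a radial integral over $r\geq t+R$, with the key point being that the $O(r^{-1/4})$ error from $\omega$ is absorbed into the main term $\|f\|_{L^2(\mathbb{S}^2)}^2$ once $r$ is large. The paper restricts the integration domain to $r\geq t+R+r_0$ to guarantee this uniformly, whereas you split into the cases $t+R\geq r_*$ and $t+R<r_*$; these are just two ways of organizing the same estimate and yield the same constant.
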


\begin{proof}  We first prove the lemma for initial data $\OR{u}(0) = \mu^+(\rho, k\rho)$. In this case  the solution $u$ has the explicit form
\[u(t, x) = \mu^+ e^{kt}\rho.\]
From  (\ref{eq:omega}), we can take $r_0$ large enough such that when $r>r_0$, we have  \begin{equation}
\int_{\mathbb{S}^2} \left|\omega(r\theta)\right|^2\, d\sigma(\theta)< \frac{1}{10}\int_{\mathbb{S}^2} \left|f(\theta)\right|^2 d\sigma(\theta). \label{eq:R0}\end{equation}
By the asymptotics of $\rho$ in (\ref{eq:rho-expansion}), we get that
\begin{eqnarray*}
\int_{|x|\geq t+R} |\partial_t u|^2(x,t)\, dx
&\ge& \int_{r\geq t+R+r_0}\int_{\mathbb{S}^{2}} |\mu^+ k|^2 e^{-2k(r-t)} \left(f(\theta) + \omega(r\theta)\right)^2\,d\sigma(\theta) dr\\
&\gtrsim&\int_{R+r_0}^{\infty}\int_{\mathbb{S}^2}|\mu^+k|^2e^{-2kr}|f(\theta)|^2\,d\sigma(\theta) dr.
 \end{eqnarray*}
Then (\ref{eq:linear-channel}) follows.

The case when  $\OR{u}(0) = \mu^-(\rho, -k\rho)$ is similar, and we omit the detail.
 \end{proof}

 Lemma~\ref{lem:linear-channel}  can be generalized to the case when the initial data has finitely many discrete modes.
 
\begin{lemma}
\label{lem:Lchannel}
Let $V$ satisfy $\displaystyle{\sup_{x\in \R^3} (1 + |x|)^{\beta} |V (x)| <\infty }$ for some $\beta >2$,  and suppose that
 $ -\Delta -V $  has negative    eigenvalues $-k_1^2 \leq -k_2^2 \leq \ldots \leq -k_n^2 <0$ with corresponding orthonormal eigenmodes $\rho_1, \rho_2, \ldots, \rho_n$.
Suppose that $u$ solves the equation
\EQ{\label{eq:lin u}
u_{tt}-\Delta u -Vu=0
}
with initial data  $\OR{u}(0) = \sum_{i=1}^n \mu^+_i(\rho_i,k_i\rho_i) $,  
then for any $R>0$, there exists a constant $c(R)>0$ such that we have the following channel of energy estimate forward in time
 \begin{equation}
 \int_{|x|\geq t+R}  |\partial_t u|^2(x,t)\, dx \geq  c(R) \sum_{i=1}^n |\mu_i^+|^2, \hspace{1cm}\text{ for } t >0.  \label{eq:Lchannel1}
 \end{equation}
Similarly, if we consider data of the form  $\OR{u}(0) = \sum_{i=1}^n \mu^-_i (\rho_i, -k_i\rho_i),$   the channel of energy estimate holds backward in time.
\end{lemma}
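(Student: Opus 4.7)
My plan is to follow the strategy of the proof of Lemma~\ref{lem:linear-channel}, using Meshkov's precise asymptotic to reduce the problem to a quantitative linear independence statement on the sphere. Since each $\rho_i$ is an eigenfunction of $-\Delta-V$ with eigenvalue $-k_i^2$, separation of variables yields the explicit formula
\[
u(t,x)=\sum_{i=1}^n\mu_i^+ e^{k_it}\rho_i(x),\qquad \partial_t u(t,x)=\sum_{i=1}^n\mu_i^+ k_i e^{k_it}\rho_i(x).
\]
Substituting the Meshkov expansion $\rho_i(x)=e^{-k_i|x|}|x|^{-1}(f_i(x/|x|)+\omega_i(x))$, switching to spherical coordinates $r=|x|$, $\theta=x/|x|$, and changing variable via $s=r-t$ gives
\[
\int_{|x|\geq t+R}|\partial_t u|^2\,dx=\int_R^\infty\!\!\int_{\mathbb{S}^2}\Bigl|\sum_{i=1}^n\mu_i^+ k_i e^{-k_is}\bigl(f_i(\theta)+\omega_i((s+t)\theta)\bigr)\Bigr|^2 d\sigma\,ds,
\]
so that the $t$-dependence has been pushed entirely into the perturbative Meshkov remainders~$\omega_i$.

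Next, I fix $r_0=r_0(V,\{\rho_i\})$ large enough that $\int_{\mathbb{S}^2}|\omega_i(r\theta)|^2\,d\sigma<\delta$ for every $r\geq r_0$ and every $i$, where $\delta>0$ will be chosen below. Restricting the integral to the smaller region $\{|x|\geq t+R+r_0\}$ forces $s\geq R+r_0$ and hence $s+t\geq r_0$ uniformly in $t\geq 0$, so the Meshkov error is uniformly controlled. Applying $|a+b|^2\geq\tfrac12|a|^2-|b|^2$ then splits the integral into a main piece
\[
M(\mu^+):=\int_{R+r_0}^\infty\!\!\int_{\mathbb{S}^2}\Bigl|\sum_{i=1}^n\mu_i^+ k_i e^{-k_is}f_i(\theta)\Bigr|^2 d\sigma\,ds,
\]
which is independent of $t$, and an error piece which by Cauchy--Schwarz, the bound on $\omega_i$, and integrability of $e^{-2k_is}$ is controlled by $C\delta\sum_i|\mu_i^+|^2$ uniformly in $t\geq 0$.

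The decisive step is the lower bound $M(\mu^+)\geq c_0(R)\sum_i|\mu_i^+|^2$, which is equivalent to the linear independence of the $n$ functions $\{e^{-k_is}f_i(\theta)\}_{i=1}^n$ in $L^2((R+r_0,\infty)\times\mathbb{S}^2)$; positive-definiteness of the associated Gram matrix then supplies the constant~$c_0$. I will establish this linear independence by grouping the indices according to the distinct eigenvalues. Between different clusters, the exponentials $e^{-k_is}$ are linearly independent on any half-line, so any nontrivial linear relation must reduce to one within a single cluster $\{i:k_i=k\}$; within such a cluster the $\rho_i$ form an orthonormal basis of the $(-k^2)$-eigenspace, and the Meshkov map $\rho\mapsto f$ is linear and injective (since Lemma~\ref{Meshkov-lemma} guarantees $f\not\equiv 0$ whenever $\rho\not\equiv 0$), so the corresponding $f_i$ are linearly independent on $\mathbb{S}^2$. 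Choosing $\delta$ small enough to absorb the error into $\tfrac12 M$ and observing that $\{|x|\geq t+R\}\supset\{|x|\geq t+R+r_0\}$ yields the forward estimate; the backward case follows by time reversal. I expect the linear independence step, which uses both the orthonormality of $\{\rho_i\}$ and the non-vanishing of the Meshkov profile, to be the principal technical point of the argument.
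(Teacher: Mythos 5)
Your plan follows the paper's own strategy very closely: write the explicit exponential solution, substitute Meshkov's asymptotic, pass to spherical coordinates and shift $s=r-t$, establish linear independence of the profiles $\{k_ie^{-k_is}f_i(\theta)\}$ by clustering the indices according to distinct $k_i$ and using injectivity of the Meshkov map $\rho\mapsto f$ on each eigenspace, and then invoke positive-definiteness of the associated Gram matrix. The linear-independence argument you sketch is exactly the paper's Case~1/Case~2 analysis and is the main technical content; you have it right.

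There is, however, a genuine gap in the absorption step, and it is precisely what Step~2 of the paper's proof is designed to repair. As written, you state the main bound as $M(\mu^+)\geq c_0(R)\sum_i|\mu_i^+|^2$ and the error bound as $C\delta\sum_i|\mu_i^+|^2$, and then propose to ``choose $\delta$ small enough to absorb the error into $\tfrac12 M$.'' But your $c_0$ is the least eigenvalue of the Gram matrix of $\{k_ie^{-k_is}f_i\}$ on $(R+r_0,\infty)\times\mathbb{S}^2$, so it depends on $r_0$, and $r_0$ depends on $\delta$: the decay $\int_{\mathbb{S}^2}|\omega_i(r\theta)|^2\,d\sigma=O(r^{-1/2})$ forces $r_0\gtrsim\delta^{-2}$, while the Gram constant decays \emph{exponentially} in $R+r_0$ (already the diagonal entry in $i=1$ is $\sim e^{-2k_1(R+r_0)}$). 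The condition $C\delta\leq\tfrac12 c_0(R,r_0(\delta))$ is therefore of the form $\delta\lesssim e^{-c/\delta^2}$ and fails for $\delta$ small; the argument does not close. The fix is to \emph{keep} the exponential weights that your Cauchy--Schwarz estimate in fact produces, rather than discarding them. By changing variables $s\mapsto s-(R+r_0)$ one obtains
\begin{equation*}
M(\mu^+)=\sum_{i,j}\bigl(\mu_i^+e^{-k_i(R+r_0)}\bigr)\bigl(\mu_j^+e^{-k_j(R+r_0)}\bigr)\langle\alpha_i,\alpha_j\rangle_{L^2((0,\infty)\times\mathbb{S}^2)}\geq c(0)\sum_i|\mu_i^+|^2e^{-2k_i(R+r_0)},
\end{equation*}
with $\alpha_i=k_ie^{-k_is}f_i(\theta)$ and $c(0)$ the Gram constant on $(0,\infty)\times\mathbb{S}^2$, which does \emph{not} depend on $R$ or $r_0$. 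Likewise, carrying the $e^{-2k_is}$ factors through your error estimate gives the bound $\lesssim\delta\sum_i|\mu_i^+|^2e^{-2k_i(R+r_0)}$, with the \emph{same} weights. The required comparison is now a condition on $\delta$ alone, independent of $r_0$ and $R$; one fixes $\delta$, then $r_0(\delta)$, and concludes with $c(R)\gtrsim e^{-2k_1(R+r_0)}$. This weighted refinement is exactly what the paper records in its Step~2 inequality and what drives the Step~3 cancellation. Your outline is therefore salvageable, but the unweighted version of the comparison is circular as stated.
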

\begin{proof} 
It suffices to prove the lemma for sufficiently large $R>0$. By normalizing the coefficients, we will prove (\ref{eq:Lchannel1}) when $\sum_{i=1}^n |\mu^+_i|^2=1$. We divide the proof into several steps.

\noindent
\textit{Step 0: Computing the asymptotics.}

First notice that  the solution has an explicit formula
\[u=\sum_{i=1}^n \mu_i^+e^{k_it}\rho_i .\]
 From Lemma~\ref{Meshkov-lemma}, we know that each $\rho_i$ has the following asymptotic
\[
\rho_i = e^{-k_i|x|}\frac{1}{|x|}\left(f_i(\frac{x}{|x|})+\omega_i(x)\right)
\] 
with $f_i\in L^2(\mathbb{S}^2)$ which does not vanishing identically, and  $\omega_i$ satsifies (\ref{eq:omega}).

Now given any $R>0$, using Lemma~\ref{Meshkov-lemma} we have
\begin{eqnarray*}
&& \lim_{t\rightarrow +\infty}\int_{|x|\geq t+R}  |\partial_t u|^2(x,t)\, dx\\
&&\,\,=   \lim_{t\rightarrow +\infty} \int_{r>t+R}\int_{\theta\in\mathbb{S}^2} \left[\sum_{i=1}^n \mu_i^+k_i e^{-k_i(r-t)}\left(f_i(\theta)+\omega_i(r\theta)\right) \right]^2\,d\sigma(\theta) \,dr\\
&&\,\,= \lim_{t\rightarrow +\infty} \int_{r>R}\int_{\theta\in\mathbb{S}^2} \left[\sum_{i=1}^n \mu_i^+k_i e^{-k_ir}\left(f_i(\theta)+\omega_i((r+t)\theta)\right) \right]^2\,d\theta \,dr\\
&&\,\,=\int_{r>R}\int_{\theta\in\mathbb{S}^2} \left[\sum_{i=1}^n \mu_i^+k_i e^{-k_ir}f_i(\theta) \right]^2\,d\sigma(\theta) \,dr.
\end{eqnarray*}
Here we used the decay condition (\ref{eq:omega}) for $\omega_i$. 

\noindent
\textit{Step 1: lower bound for the asymptotics. }

We claim that  for any $R\ge0$ fixed, there exists constant $c(R)>0$ such that  for any $\mu^+_i$ satisfying $\sum_{i=1}^n |\mu^+_i|^2=1$, we have
\begin{equation}\int_{r>R}\int_{\theta\in\mathbb{S}^2} \left[\sum_{i=1}^n \mu_i^+k_i e^{-k_ir}f_i(\theta) \right]^2\,d\sigma(\theta) \,dr   \geq c(R).   \label{eq:Lchannel}\end{equation}
Suppose (\ref{eq:Lchannel}) is not true, then
for  any $ N>0$, we find  $\mu_i^+(N)$  satisfying $\sum_{i=1}^n |\mu^+_i(N)|^2=1$  such that
\begin{equation}\int_{r>R}\int_{\theta\in\mathbb{S}^2} \left[\sum_{i=1}^n \mu_i^+(N)k_i e^{-k_ir}f_i(\theta) \right]^2\,d\sigma(\theta) \,dr   < \frac{1}{N}. \label{eq:NN}\end{equation}

Using that  $\mu^+_i(N)$ are bounded,  we can extract a convergent subsequence. Hence we can assume that $\mu^+_i(N)\rightarrow a_i$ as $N\rightarrow \infty$, and $\sum_{i=1}^n a_i^2=1$.  By the dominated convergence theorem,  we pass to the limit in (\ref{eq:NN}) and  get
\[
\int_{r>R}\int_{\theta\in\mathbb{S}^2} \left[\sum_{i=1}^n a_ik_i e^{-k_ir}f_i(\theta) \right]^2\,d\sigma(\theta) \,dr=0
\] 
which  implies that
\begin{equation}\sum_{i=1}^n a_ik_i e^{-k_ir}f_i(\theta) =0 \hspace{1cm} \text{ for }\,  r>R, \,
\theta\in \mathbb{S}^2.\label{eq:ContradictionId}\end{equation}
Now we consider the problem in several cases:

\smallskip

\textbf{Case 1:} if $k_i$ are different, then in (\ref{eq:ContradictionId}) 
we first multiply with $e^{-k_nr}$ and let $r\rightarrow \infty$, we conclude $a_nf_n=0$, and similarly we conclude
\[
a_i\,f_i(\theta)=0,  \hspace{1cm}{\rm for}\,\,1\leq i\leq n\,\,{\rm and}\,\,\theta\in \mathbb{S}^2.
\]
Since $\|f_i\|_{L^2(\mathbb{S}^2)}\not=0,$ we conclude that $a_i=0, $ which is a contradiction to   $\sum a_i^2=1$.

\smallskip

\textbf{Case 2: }  If one of the eigenvalues has multiplicity more than 1, say, $k_{i_0}$ with multiplicity $m$, i.e., 
$k_{i_0}= k_{i_0+1}=k_{i_0+m-1}\not =k_j$ for any $j\in \{1, \ldots, n\}\backslash\{i_0, i_0+1, \ldots i_0+m-1\}$.  
All other eigenvalue still have multiplicity~$1$.  Then (\ref{eq:ContradictionId}) now reads as
 \[ a_1k_1 e^{-k_1r}f_1(\theta)   + \ldots  + 
 e^{-k_{i_0}r}k_{i_0}\left[\sum_{i=i_0}^{i_0+m-1} a_i  f_i(\theta)\right] + \ldots +a_nk_n e^{-k_nr}f_n(\theta) =0 
 \hspace{0.5cm} \text{ for } \,\,r >R, \,\,\theta\in \mathbb{S}^2.\]
 Applying the same method as in Case~1, we conclude that
 \[
 a_1 f_1(\theta)=0, \ldots,  \sum_{i=i_0}^{i_0+m-1} a_i   f_i(\theta)=0, \ldots ,a_n f_n(\theta) =0, \hspace{1cm}{\rm for}\,\, \theta\in \mathbb{S}^2, 
 \]
  which implies $a_i=0$,  for any $i\in \{1, \ldots, n\}\backslash\{i_0, i_0+1, \ldots i_0+m-1\}$.

Now we consider the part  $\sum_{i=i_0}^{i_0+m-1} a_i   f_i(\theta)=0$ and prove that all $a_i=0$.  
Denote $L=-\Delta-V$. By $L_{\phi}\rho_i =-k_{i_0}^2\rho_i,  i_0\leq i\leq i_0+m-1$,  we see that
 \[
 L\left(\sum_{i=i_0}^{i_0+m-1} a_i  \rho_i\right) = -k_{i_0}^2\left(\sum_{i=i_0}^{i_0+m-1} a_i \rho_i\right).
 \]
 Assuming towards a contradiction that not all $a_i=0$, we conclude that $\sum_{i=i_0}^{i_0+m-1} a_i  \rho_i $ is an  eigenfunction for $L$ with eigenvalue $-k_{i_0}^2$.

On the other hand,  
 \[\sum_{i=i_0}^{i_0+m-1} a_i  \rho_i  = e^{-k_{i_0}|x|}\frac{1}{|x|}\left [\sum_{i=i_0}^{i_0+m-1} a_i w_i(x)\right].\]
This  contradicts Lemma~\ref{Meshkov-lemma}, in particular (\ref{eq:rho-expansion}).  Hence we conclude that $a_i=0, 1\leq i\leq n$, which is a contradiction to $\sum a_i^2=1$.

\smallskip

\textbf{Case 3:} In general, we could have several eigenvalues that have multiplicity  more than~$1$. In that case we repeat  the argument in Case~2 as needed.

Hence we conclude that our claim (\ref{eq:Lchannel}) is true. 

\medskip 

\noindent

\textit{Step 2: Refining the lower bound for asymptotics. }

\smallskip

Next we refine (\ref{eq:Lchannel}) by obtaining  a better lower bound.  Let $\alpha_i = k_ie^{-k_ir} f_i(\theta), 1\leq i\leq n$ for $r>0,\,\theta\in \mathbb{S}^2$,  and
\[
\left< {\alpha}_i ,  {\alpha}_j\right>:=\int_{r>0}\int_{\theta\in \mathbb{S}^2}  {\alpha}_i   {\alpha}_j\,d\sigma(\theta) dr, \hspace{1cm} \mathcal{A}_{n\times n}:=[\left< {\alpha}_i ,  {\alpha}_j\right>]_{1\leq i, j\leq n}.
\]
Then (\ref{eq:Lchannel}) with $R=0$ implies that $\mathcal{A}$ is a positive definite matrix.  
And for any $\vec{v}\in \R^n, \|\vec{v}\|=1$, one has  $\vec{v}^t A \vec{v}\geq c(0) >0$.

Now for any $R>0$, we    change   variables $r=s+R$  in (\ref{eq:Lchannel}) to wit
 \begin{align} 
 \int_{r>R}\int_{\theta\in\mathbb{S}^2} \left[\sum_{i=1}^n \mu_i^+k_i e^{-k_ir}f_i(\theta) \right]^2\,d\sigma(\theta) \,dr = &\int_{s>0}\int_{\theta\in\mathbb{S}^2} \left[\sum_{i=1}^n \mu_i^+k_i e^{-k_i s} e^{-k_iR}f_i(\theta) \right]^2\,d\sigma(\theta) \,ds \notag \\
= & \sum_{i,j} \mu^+_i e^{-k_iR}  \mu^+_j e^{-k_jR} \left<\alpha_i,\alpha_j\right>\notag \\
\geq & c(0) \sum_{i=1}^n  \left|\mu^+_i e^{-k_iR}\right|^2. \label{eq:betterbound}
\end{align}

\smallskip

\noindent
\textit{Step 3: Channel of energy estimate}

\smallskip

Now we prove  (\ref{eq:Lchannel1}). The computation from {\it Step 0} implies that
\EQ{ 
& \int_{|x|\geq t+R}  |\partial_t u|^2(x,t)\, dx\\
&=   \int_{r>R}\int_{\theta\in\mathbb{S}^2} \left[\sum_{i=1}^n \mu_i^+k_i e^{-k_ir}\left(f_i(\theta)+\omega_i((r+t)\theta)\right) \right]^2\,d\sigma(\theta) \,dr\\
}
Expanding the square, this further equals 
\EQ{ 
&=\sum_{i,j=1}^n\int_{r>R}\int_{\theta\in\mathbb{S}^2} \mu_i^+\mu_j^+k_ik_j e^{-(k_i+k_j)r} f_i(\theta) f_j(\theta)\,d\sigma(\theta) \,dr \\
&\,\,\,+ \sum_{i,j=1}^n \int_{r>R}\int_{\theta\in\mathbb{S}^2}  \mu_i^+\mu_j^+k_ik_j e^{-(k_i+k_j)r}\bigg[ f_i(\theta)\omega_j((r+t)\theta) + f_j(\theta)\omega_i((r+t)\theta) \bigg]\,d\theta \,dr  \\
&\,\,\,+ \sum_{i,j=1}^n \int_{r>R}\int_{\theta\in\mathbb{S}^2}  \mu_i^+\mu_j^+k_ik_j e^{-(k_i+k_j)r}\ \omega_i((r+t)\theta) \omega_j((r+t)\theta) \,d\sigma(\theta) \,dr.
}
Using the decay estimate of $\omega_j$ in (\ref{eq:omega}) and Cauchy-Schwarz inequality, we infer that
\begin{align*}
&\sum_{i,j=1}^n\left|\int_{r>R}\int_{\theta\in\mathbb{S}^2}  \mu_i^+\mu_j^+k_ik_j e^{-(k_i+k_j)r} f_i(\theta)\omega_j((r+t)\theta) \,d\sigma(\theta) \,dr\right| \\
&\leq  \sum_{i,j=1}^n \left( \int_{r>R}\int_{\theta\in\mathbb{S}^2}  |\mu_i^+k_i|^2  e^{-2k_ir} \left|f_i(\theta)\right|^2 \,d\sigma(\theta) \,dr\right)^{\frac12} \\
&\hspace{2cm}\times \left( \int_{r>R}\int_{\theta\in\mathbb{S}^2}  |\mu_j^+k_j|^2  e^{-2k_jr} \left|\omega_j((r+t)\theta)\right|^2 \,d\sigma(\theta) \,dr\right)^{\frac12} \\
&\lesssim R^{-\frac{1}{4}}\sum_{i=1}^n\left|\mu_i^+\right|^2e^{-2k_iR}.
\end{align*}
Similarly,  we have
\begin{align*}  & \sum_{i,j=1}^n\int_{r>R}\int_{\theta\in\mathbb{S}^2}  \mu_i^+\mu_j^+k_ik_j e^{-(k_i+k_j)r}\ \omega_i((r+t)\theta) \omega_j((r+t)\theta) \,d\sigma(\theta) \,dr\\
 &\lesssim  R^{-\frac{1}{2}}\sum_{i=1}^n\left|\mu_i^+\right|^2e^{-2k_iR}.\end{align*}
Together with (\ref{eq:betterbound}) we obtain 
\begin{align*}
 \int_{|x|\geq t+R}  |\partial_t u|^2(x,t)\, dx
&\geq    \,c(0) \sum_{i=1}^n  \left|\mu^+_i\right|^2 e^{-2k_iR}-  C(R^{-\frac14} +R^{-\frac12})\sum_{i=1}^n  \left|\mu^+_i\right|^2 e^{-2k_iR}\\
&\geq\, \frac{c(0)}{2}\sum_{i=1}^n \left |\mu^+_i e^{-k_iR}\right|^2  \geq \, \frac{ c(0)}{2}e^{-2k_1R}, 
\end{align*}
where $R$ is sufficiently large. The lemma is proved.
 \end{proof}

Next we consider the case when there are several negative eigenvalues and prove that if one of the discrete modes is dominant, then we still have the channel of energy estimate.
 
 \begin{corollary}
 \label{Cor:linear-channel} 
 
Let $V$ satisfy $\displaystyle{\sup_{x\in \R^3} (1 + |x|)^{\beta} |V (x)| <\infty }$ for some $\beta >2$,  and suppose that
 $ -\Delta -V $ has no zero eigenvalue or zero resonance, and that it has negative    eigenvalues $-k_1^2 \leq -k_2^2 \leq \ldots \leq -k_n^2 <0$ with corresponding orthonormal eigenmodes $\rho_1, \rho_2, \ldots, \rho_n$.

Let $ u(t)$ be  a solution to \eqref{eq:lin u} with initial data \[\OR{u}(0) =  (\gamma_0, \gamma_1) + \sum_{i=1}^n [\mu^+_i (\rho_i, k_i\rho_i) +\mu^-_i(\rho_i, -k_i\rho_i)]\]
satisfying the  orthogonal conditions $\int \rho_i\gamma_0\, dx =\int \rho_i\gamma_1\, dx = 0$,   $1\leq i\leq n$.

(1) For any $R\ge 0$, if we  have
\begin{equation}
|\mu^+_{i_0}|> K_0\left[\|(\gamma_0, \gamma_1)\|_{\HL}  +\sum_{i=1}^n|\mu_i^-|\right]  \label{eq:KK}
\end{equation}
for sufficiently large constant $K_0:=K_0(R)>0$,  then there exists a constant $c(R)>0$ such that
\begin{equation}
\int_{|x|\geq t+R} |\partial_t u|^2(x,t)\, dx \geq\, c(R)\, |\mu^+_{i_0}|^2, \hspace{1cm} \text{ for\,\,all\,\, } t\geq 0. \end{equation}

(2) For any $R\ge 0$, if we have
\[
|\mu^-_{i_0}|> K_0\left[\|(\gamma_0, \gamma_1)\|_{\HL}  +\sum_{i=1}^n|\mu_i^+|\right]  \label{eq:K}\]
for sufficiently large fixed constant $K_0:=K_0(R)>0$, then  there exists a constant $c(R)>0$ such that
\begin{equation}
\int_{|x|\geq |t|+R} |\partial_t u|^2(x,t)\, dx \geq\, c(R) \, |\mu^-_{i_0}|^2, \hspace{1cm} \text{ for\,\,all\,\, } t\leq 0. \end{equation}
 \end{corollary}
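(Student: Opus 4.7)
The plan is to split $u$ by linearity so that the bulk of the solution is handled by Lemma~\ref{lem:Lchannel} and the remainder is controlled by conservation of energy. I write out case~(1) in detail; case~(2) follows by applying case~(1) to the time-reversed solution $v(t,x):=u(-t,x)$, which satisfies the same equation~\eqref{eq:lin u} and which swaps the roles of $\mu_i^+$ and $\mu_i^-$.

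For case~(1), let $u_1$ solve~\eqref{eq:lin u} with initial data $\OR{u}_1(0)=\sum_{i=1}^n\mu_i^+(\rho_i,k_i\rho_i)$ and $u_2$ solve~\eqref{eq:lin u} with initial data $\OR{u}_2(0)=(\gamma_0,\gamma_1)+\sum_{i=1}^n\mu_i^-(\rho_i,-k_i\rho_i)$, so that $u=u_1+u_2$. Lemma~\ref{lem:Lchannel} applied to $u_1$ yields, for all $t\ge 0$,
\[
\int_{|x|\ge t+R}|\partial_t u_1|^2(x,t)\,dx \,\ge\, c_1(R)\sum_{i=1}^n|\mu_i^+|^2\,\ge\, c_1(R)\,|\mu^+_{i_0}|^2.
\]

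For $u_2$, the spectral decomposition decouples:  $u_2(t,x)=\gamma(t,x)+\sum_{i=1}^n\mu_i^-e^{-k_i t}\rho_i(x)$, where $\gamma$ is the solution of $\partial_{tt}\gamma+\omega^2\gamma=0$ with data $(\gamma_0,\gamma_1)\in P^\perp(\HL)$. Since by hypothesis $-\Delta-V$ has no zero eigenvalue and no zero resonance, $\omega^2$ is coercive on $P^\perp(\dot H^1)$, hence the conserved quantity $\int(|\partial_t\gamma|^2+\omega^2\gamma\cdot\gamma)\,dx$ is equivalent to $\|\OR{\gamma}(t)\|_{\HL}^2$, giving
\[
\|\OR{\gamma}(t)\|_{\HL}\,\lesssim\,\|(\gamma_0,\gamma_1)\|_{\HL}\qquad\text{for all }t\ge 0.
\]
The contribution of the stable modes is controlled by $|e^{-k_i t}|\le 1$ for $t\ge 0$ together with $\rho_i\in L^2$ by Agmon's estimate, producing
\[
\int_{\R^3}|\partial_t u_2(t,x)|^2\,dx \,\le\, C_2\Bigl(\|(\gamma_0,\gamma_1)\|_{\HL}^2+\sum_{i=1}^n|\mu_i^-|^2\Bigr) \,\le\, C_2'\Bigl(\|(\gamma_0,\gamma_1)\|_{\HL}+\sum_{i=1}^n|\mu_i^-|\Bigr)^2.
\]

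Finally, using the elementary inequality $(a+b)^2\ge\tfrac12 a^2-2b^2$ pointwise on $|\partial_t u|=|\partial_t u_1+\partial_t u_2|$ and integrating over $\{|x|\ge t+R\}$,
\[
\int_{|x|\ge t+R}|\partial_t u|^2\,dx \,\ge\, \tfrac12\int_{|x|\ge t+R}|\partial_t u_1|^2\,dx -2\int_{\R^3}|\partial_t u_2|^2\,dx \,\ge\, \tfrac{c_1(R)}{2}|\mu^+_{i_0}|^2-2C_2'\Bigl(\|(\gamma_0,\gamma_1)\|_{\HL}+\sum_{i=1}^n|\mu_i^-|\Bigr)^2.
\]
The dominance hypothesis~\eqref{eq:KK} with $K_0$ chosen so that $K_0^2\ge 16\,C_2'/c_1(R)$ makes the negative term at most $\tfrac{c_1(R)}{4}|\mu^+_{i_0}|^2$, yielding the lower bound with $c(R):=c_1(R)/4$. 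The main subtlety is the coercivity step for $\omega^2$ on the continuous spectrum, which is precisely where the absence of zero eigenvalue and zero resonance in the hypothesis enters; once that is in place, the rest is a clean mode decomposition combined with Lemma~\ref{lem:Lchannel}.
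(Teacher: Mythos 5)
Your proposal is correct and follows essentially the same route as the paper: decompose the solution into the unstable-mode part (apply Lemma~\ref{lem:Lchannel}), the stable-mode part (bounded by $|e^{-k_it}|\le1$ for $t\ge0$ and the $L^2$ decay of $\rho_i$), and the continuous part (bounded by the conserved linear energy), then combine with the elementary inequality $(a+b)^2\ge\tfrac12 a^2-2b^2$ and choose $K_0$ large; case (2) by time reversal. The only cosmetic difference is that you invoke coercivity of $\omega^2$ on $P^\perp(\dot H^1)$ to control $\gamma$, whereas the paper cites the Strichartz estimate of Lemma~\ref{lm:RervStrichartzwithPotential}; both yield the needed bound $\sup_{t\ge0}\|\partial_t\gamma(t)\|_{L^2}\lesssim\|(\gamma_0,\gamma_1)\|_{\HL}$, and in fact for this one-sided bound plain non-negativity of the quadratic form on $P^\perp$ together with energy conservation already suffices, so full coercivity is slightly more than you need.
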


\begin{proof}
To prove (1), first note that the solution is of the form
\[u= \sum_{i=1}^n \mu_i^+ e^{k_it}\rho_i +\mu_i^-e^{-k_it}\rho_i +\gamma(t,x)\]
with the continuous part $\gamma$ solving the equation
\[\gamma_{tt} +P^{\perp}(-\Delta -V)\gamma=0\]
Hence from Lemma~\ref{lem:Lchannel} and the Strichartz estimate for $\gamma$ (\ref{eq:strichartzwithpotential}), we get for $ t\geq 0$
\begin{align*}
&\int_{|x|>t+R} |\partial_t u(x,t)|^2 \,dx \\
&\geq  \, \frac{1}{2} \int_{|x|\geq t+R} \left|\sum_{i=1}^n\mu_{i}^+k_{i} e^{k_{i}t}\rho_i\right|^2dx
  -2 \sum_{i=1}^n\int_{|x|\geq t+R}\left|  \mu_i^- k_ie^{-k_it} \rho_i\right|^2 dx  -2 \int_{|x|\geq t+R} |\partial_t \gamma|^2 dx \\
&\geq \, c(R) \sum_{i=1}^n |\mu_{i}^+|^2 - C \sum_{i=1}^n   |\mu_i^-|^2  - C \|(\gamma_0,\gamma_1)\|^2_{\HL}\\
&\geq\,  \frac{c(R) }{2} |\mu_{i_0}^+|^2,
\end{align*}
if $K_0$ in (\ref{eq:KK}) is sufficiently large.\\
Case (2) follows from (1) by time reversal. 
\end{proof}

Next we shall see that the channel of energy estimate is stable with respect to nonlinear perturbations.  In particular, the following lemma shows that if the initial data is very close to a steady state, and
  one discrete eigenmode of the initial data is dominant,  then the solution will radiate energy outside the light cone either forward or backward in time.

\begin{lemma}\label{nonlinear} 
Fix any $R>0$. Consider a finite energy solution $u$ to the nonlinear  equation (\ref{eq:mainequation}) with initial data $(u_0, u_1)\in \HL$.  
Given a stationary solution $\phi$ and  $\LL_{\phi} = -\Delta -V +5\phi^4$ with orthonormal eigenmodes $\rho_1, \rho_2, \ldots, \rho_n$ corresponding to negative eigenvalues $-k_1^2 \leq -k_2^2 \leq \ldots \leq -k_n^2 <0.$

(1) Let  $(u_0, u_1)$ be of the form 
$\OR{u}(0) = (\phi, 0) + (h_0,h_1) $ with 
\[(h_0,h_1) =  (\gamma_0, \gamma_1) + \sum_{i=1}^n \left[\mu^+_i (\rho_i, k_i\rho_i) +\mu^-_i(\rho_i, -k_i\rho_i)\right]\]
and   $\int \rho_i \gamma_0\, dx=\int \rho_i \gamma_1\, dx =0$ for all $1\leq i\leq n$.
Assume that $$|\mu_{i_0}^+|:=\max\{|\mu_{i}^+|,\,i=1,\dots,n\}$$ and that
\begin{align}
|\mu^+_{i_0}|>  K\left[\|(\gamma_0, \gamma_1)\|_{\HL} +  \sum_{i=1 }^n |\mu_i^-|\right], \label{eq:mu-domination}
\end{align}
as well as 
$$\|(h_0,h_1)\|_{\HL}<\epsilon_{\ast},$$
for some sufficiently large constants $K\gg 1$ and sufficiently small $\epsilon_{\ast}>0$ that only depend on the potential $V$ and $R$.
Then  the solution satisfies the channel of energy estimate
\begin{equation}
\int_{|x|\geq t+R} |\partial_t u|^2(x,t) \,dx \geq c(R) |\mu^+_{i_0}|^2, \hspace{1cm} \text{ for } t\geq 0 \label{eq:nonlinear-channel}
\end{equation}
for some constant $c(R)>0$.

 (2) Assume that $(u_0, u_1)$ has the   decomposition $\OR{u}(0) = (\phi, 0) +(h_0,h_1)$ with
 \[ (h_0,h_1) = \sum_{i=1}^n\mu^+_{i} (\rho_{i}, k_{i}\rho_{i}) +(\mathcal{R}_0,\,\mathcal{R}_1).\]
Furthermore, suppose that for  $|\mu_{i_0}^+|:=\max\{|\mu_i^+|,\,i=1,\dots,n\}$, we have
 \begin{equation}|\mu^+_{i_0}| > K\|(\mathcal{R}_0,\,\mathcal{R}_1)\|_{\HL} \label{eq:large1}\end{equation}
 and $\|(h_0,h_1)\|_{\HL}<\epsilon_{\ast}$,
for sufficiently large $K\gg 1$ and sufficiently small $\epsilon_{\ast}>0$ that depend only on $V,\,R$.  Then
 the solution $u$ satisfies the channel of energy estimate
 \begin{equation}
\int_{|x|\geq t+R} |\partial_t u|^2(x,t)\, dx \geq c(R) |\mu^+_{i_0}|^2, \hspace{1cm} \text{ for } t\geq 0 \label{eq:nonlinear-channel2}
\end{equation}
for some  constant $c(R)>0$.

(3) Similar results hold when we switch  $\mu_i^-$ with $\mu_i^+$ in (1), (2) and  consider $t\leq 0$. \end{lemma}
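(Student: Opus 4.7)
The plan is to prove (1) by linearizing around $\phi$ and applying Corollary \ref{Cor:linear-channel} to the linearized flow, then transferring the channel-of-energy estimate to the nonlinear solution via a perturbation argument; parts (2) and (3) will then follow by absorption and time reversal respectively. Write $u=\phi+h$, so that
\[
h_{tt}+\LL_\phi h = -N(\phi,h), \qquad \OR{h}(0)=(h_0,h_1),
\]
where $N(\phi,h)=(\phi+h)^5-\phi^5-5\phi^4 h$ is at least quadratic in $h$. Let $h_L$ solve the linearized equation $\partial_{tt} h_L+\LL_\phi h_L=0$ with the same initial data. Since $\LL_\phi = -\Delta - (V-5\phi^4)$ and $\phi\lesssim(1+|x|)^{-1}$, the potential $V-5\phi^4$ lies in $Y$ and meets the hypotheses of Corollary \ref{Cor:linear-channel}. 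Under the dominance condition (\ref{eq:mu-domination}) that corollary yields
\[
\int_{|x|\geq t+R} |\partial_t h_L(t,x)|^2\, dx \;\geq\; c_0(R)\,|\mu_{i_0}^+|^2 \qquad \forall\, t\geq 0.
\]

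The next step is to control $w:=h-h_L$, which satisfies $w_{tt}+\LL_\phi w=-N(\phi,h)$ with zero data. On a fixed interval $[0,T_0]$ with $T_0=T_0(V,R)$, Lemma \ref{lem:perturbation} applied with background $\widetilde u = \phi + h_L$ (using that $-V+5\phi^4$ has finite $L^{5/4}_tL^{5/2}_x([0,T_0]\times\R^3)$ norm) and standard Strichartz bounds give
\[
\sup_{t\in[0,T_0]} \|\OR{w}(t)\|_{\HL} \;\lesssim\; \|(h_0,h_1)\|_{\HL}^2.
\]
The dominance (\ref{eq:mu-domination}) together with $|\mu_i^+|\leq|\mu_{i_0}^+|$ forces $\|(h_0,h_1)\|_{\HL}\leq C_n|\mu_{i_0}^+|$, where $C_n$ depends only on the eigenvalues. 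Hence the error is of order $|\mu_{i_0}^+|^4$, and using $|a+b|^2\geq \tfrac12|a|^2-|b|^2$ together with the linear lower bound we obtain (\ref{eq:nonlinear-channel}) on $[0,T_0]$, provided $|\mu_{i_0}^+|^2\leq c_0(R)/(4C)$, which holds once $\epsilon_*$ is small enough depending on $V,R$.

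Globalizing the bound to all $t\geq 0$ is the main obstacle: the $\HL$-norm of $h_L$ grows like $e^{k_{i_0}t}$, so the naive perturbation lemma cannot be iterated past $T_0$. The saving observation is that the entire channel-of-energy mechanism is localized in the exterior region $\{|x|\geq t+R\}$, where by Lemma \ref{Meshkov-lemma} the growing mode contributes only a \emph{time-independent} tail of magnitude $|\mu_{i_0}^+|e^{-k_{i_0}R}$, because $\rho_{i_0}(x)\sim |x|^{-1}e^{-k_{i_0}|x|}$ and the exponentials cancel on $|x|=t+R$. By finite speed of propagation applied to $w_{tt}-\Delta w=Vw-5\phi^4 w-N(\phi,h)$, the value of $w$ in the exterior is determined solely by the source in the corresponding lateral cone. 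In that region both the linear potential terms (through the decay of $V$ and $\phi^4$) and the quadratic nonlinear term $\phi^3h^2$ (through the decay of $\phi$) are small, and a bootstrap in the exterior $L^{6,2}_xL^\infty_t \cap L^\infty_x L^2_t$-norms---paralleling Step 2 of the proof of Theorem \ref{th:localmanifold}---upgrades the local estimate to
\[
\int_{|x|\geq t+R} |\partial_t w(t,x)|^2\, dx \;\leq\; C(V,R)\,|\mu_{i_0}^+|^4 \qquad \forall\, t\geq 0,
\]
and a final application of the triangle inequality yields (\ref{eq:nonlinear-channel}) with $c(R)=c_0(R)/4$.

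Part (2) is essentially the same argument, with $(\gamma_0,\gamma_1)+\sum_i\mu_i^-(\rho_i,-k_i\rho_i)$ absorbed into a single remainder $(\mathcal{R}_0,\mathcal{R}_1)$; the dominance (\ref{eq:large1}) plays the role of (\ref{eq:mu-domination}) throughout, and no structural change is needed since Corollary \ref{Cor:linear-channel} only requires that the prevailing $\mu_{i_0}^+$-mode dominate \emph{everything else}, regardless of how the rest is organized. Part (3) is immediate from the $t\mapsto-t$ symmetry of (\ref{eq:mainequation}): the reversed problem exchanges the unstable and stable modes $\mu_i^+\leftrightarrow\mu_i^-$, so the backward-in-time channel estimate reduces to (1)--(2) applied to the time-reversed solution.
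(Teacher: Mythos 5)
Your overall architecture is right: linearize around $\phi$, apply the linear channel estimate of Corollary~\ref{Cor:linear-channel} to $h_L$, and transfer the bound to the nonlinear $h$ by a perturbation argument in the exterior region. You also correctly identify the central difficulty, namely that $\|\OR{h}_L(t)\|_{\HL}$ grows like $e^{k_1 t}$ so the perturbation Lemma~\ref{lem:perturbation} cannot be invoked on $[0,T_0]$ with a $T_0$-independent constant, and you correctly see that finite speed of propagation and the exponential decay of the $\rho_i$ make the exterior region $\{|x|\geq t+R\}$ the right place to work.

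The genuine gap is in the globalization step. You propose to ``upgrade the local estimate'' by a bootstrap in exterior $L^{6,2}_xL^\infty_t\cap L^\infty_xL^2_t$ norms, but such exterior reversed Strichartz estimates are not established anywhere in the paper, and for the time-dependent region $\{|x|>t+R\}$ they would be quite delicate to set up (the inhomogeneous estimate over a shrinking-then-growing cone, the treatment of the discrete spectrum, etc.). In fact no reversed Strichartz machinery is needed here at all. The paper's device is much more elementary: define $\widetilde V(x,t)=V(x)\chi_{\{|x|\geq|t|\}}$ and $\widetilde\phi(x,t)=\phi(x)\chi_{\{|x|\geq|t|\}}$ and let $\tilde h$, $\tilde h^L$ solve the equations with the truncated coefficients. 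By finite speed of propagation these agree with $h$, $h^L$ on $\{|x|>|t|\}$, hence in particular on $\{|x|\geq t+R\}$; and because of the decay of $V$ and $\phi$, the truncated coefficient $-\widetilde V+5\widetilde\phi^4$ lies in $L^{5/4}_tL^{5/2}_x(\R^3\times\R)$ globally in time. Then Lemma~\ref{lem:perturbation} applies on all of $[0,\infty)$ with a constant depending only on $V$, giving $\sup_t\|\OR{\tilde h}(t)-\OR{\tilde h}^L(t)\|_{\HL}\lesssim|\mu_{i_0}^+|^2$ directly, without any two-stage argument or new function-space estimate. Put differently: the obstruction you identify (that the perturbation lemma ``cannot be iterated past $T_0$'') simply does not arise once the potential is truncated, since the truncated linear flow has no exponentially growing modes — the linearized operator $-\Delta-\widetilde V+5\widetilde\phi^4$ is time-dependent and its ``potential'' part is space-time integrable.

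Your deduction of (2) from (1) by redecomposing $(\mathcal{R}_0,\mathcal{R}_1)$ and absorbing the cross terms is plausible but requires care: after redecomposition the dominant index may shift, and the new $\mu_i^+$ coefficients are $\mu_i^+$ perturbed by quantities of size $\lesssim\|\mathcal{R}\|\lesssim|\mu_{i_0}^+|/K$, so one must verify that (i) the new dominance condition still holds, and (ii) the lower bound in (1) for the new maximum is comparable to $|\mu_{i_0}^+|^2$. The paper instead compares $u$ with the auxiliary nonlinear solution $v$ whose data is $(\phi,0)+\sum_i\mu_i^+(\rho_i,k_i\rho_i)$ (to which part (1) applies trivially) and then estimates $\|\OR{\tilde h}-\OR{\tilde\ell}\|_{\HL}\lesssim\|\mathcal{R}\|\lesssim|\mu_{i_0}^+|/K$ by the same truncated perturbation argument; this sidesteps the bookkeeping. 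Part (3) by time reversal is fine.
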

   
   \begin{proof}
   (1)  Write $u=\phi +h$.  Then $h$  solves the equation
   \[h_{tt} +(-\Delta -V +5\phi^4) h =\mathcal{N}(h,\phi)\]
   with $\mathcal{N}(h,\phi)=-(\phi+h)^5 + \phi^5 +5\phi^4h$.

Let    $h^L$ be the solution to the linear equation
      \begin{equation}h^L_{tt} +(-\Delta -V +5\phi^4) h^L =0. \label{eq:linearized}\end{equation}
Define
    \begin{equation}
    \widetilde{V}(x,t):= \left\{\begin{aligned}
 V(x) & & \text{ if  } |x|\geq |t|,\\
 0 & &  \text{ if  } |x| < |t|, 
 \end{aligned}\right.
  \end{equation}
  and 
  \begin{equation}
  \widetilde{\phi}(x,t):= \left\{\begin{aligned}
 \phi(x) & & \text{ if  } |x|\geq |t|,\\
 0 & &  \text{ if  } |x| < |t|, 
 \end{aligned}\right.\end{equation}
 respectively. 
Let $\tilde{h}^L$ and $\tilde{h}$ be the solution to the linear and nonlinear wave equation with truncated potential, viz.
\begin{align}\tilde{h}^L_{tt} +(-\Delta -\widetilde{V} +5\widetilde{\phi}^4) \tilde{h}^L & =0,\\
 \tilde{h}_{tt} +(-\Delta -\widetilde{V} +5\widetilde{\phi}^4) \tilde{h} &=\mathcal{N}(\tilde{h},\widetilde{\phi}).\label{eq:truncation}\end{align}
It is easy to check that
$$\widetilde{V},\,\widetilde{\phi}\in L^{\frac{5}{4}}_tL^{\frac{5}{2}}_x(\mathbb{R}^3\times \mathbb{R}).$$
We take the initial data
\[\OR{h}(0)=\OR{h}^L(0)=\OR{\tilde{h}}^L(0) =\OR{\tilde{h}}(0)= (\gamma_0, \gamma_1) + \sum_{i=1}^n [\mu^+_i (\rho_i, k_i\rho_i) +\mu^-_i(\rho_i, -k_i\rho_i)]\]
which satisfy the condition  (\ref{eq:mu-domination}) with a large constant $K$ to be chosen later. 
By finite speed of propagation,  $t\in \R$,  $\OR{h}=\OR{\tilde{h}}$, $\OR{h}^L =\OR{\tilde{h}}^L$ for $|x|>|t|$.
In view of  Lemma \ref{lem:perturbation},
  \begin{equation}\sup_{t\in [0,\infty)}\left\|\OR{\tilde{h}}(t)\right\|_{\HL} + \left\|\tilde{h}\right\|_{L_t^5L_x^{10}([0,\infty)\times \R^3)}\lesssim \|\OR{\tilde{h}}(0)\|_{\HL}\lesssim |\mu^+_{i_0}| \label{eq:difference}\end{equation}
  and
  \[\sup_{t\in [0,\infty)}\left\|\OR{\tilde{h}}(t)- \OR{\tilde{h}}^L(t)\right\|_{\HL} + \left\|\tilde{h} -\tilde{h}^L\right\|_{L_t^5L_x^{10}([0,\infty)\times \R^3)}\lesssim |\mu^+_{i_0}|^2,\]
if $\epsilon_{\ast}$ is chosen sufficiently small depending on $V$.

 Take $K>K_0(R)$ where $K_0(R)$ is the constant from part (1) of Corollary~\ref{Cor:linear-channel}, then we get that the linear solution $h^L$ satisfies the channel estimate,
  \[\int_{|x|\geq t+R} \left|\partial_t h^L(x,t)\right|^2 \,dx \geq c(R) |\mu_{i_0}^+|^2, \hspace{1cm}  \text{  for  }t\geq 0.\]
   Hence,  for all $t\geq 0$
  \begin{align*}\int_{|x|\geq t+R} |\partial_t h|^2(x,t)\,dx =& \int_{|x|\geq t+R} |\partial_t \tilde{h}|^2(x,t)\,dx \\\geq& \int_{|x|\geq t+R} |\partial_t \tilde{h}^L|^2(x,t)\,dx- C |\mu^+_{i_0}|^4\\
=& \int_{|x|\geq t+R} \left|\partial_t  {h}^L\right|^2(x,t)\,dx- C\, |\mu^+_{i_0}|^4\\
  \geq &  c(R)|\mu^+_{i_0}|^2 -C|\mu^+_{i_0}|^4 \geq  \frac{c(R)}{2}|\mu^+_{i_0}|^2.
  \end{align*}
  The last line holds provided  $ \epsilon_{\ast}=\epsilon_{\ast}(R)\gtrsim \left|\mu_{i_0}^+\right|$ is   small enough.

\medskip

\noindent
(2) Consider two solutions to equation (\ref{eq:mainequation}) $u$ and $v$, with data
\begin{eqnarray*}
 \OR{u}(0) &=& (\phi, 0) + \sum_{i=1}^n \mu^+_{i} (\rho_{i}, k_{i}\rho_{i}) + (\mathcal{R}_0,\,\mathcal{R}_1),\\
 \OR{v}(0) &=& (\phi, 0) +   \sum_{i=1}^n \mu^+_{i} (\rho_{i}, k_{i}\rho_{i}),
\end{eqnarray*}
respectively. 
  If we set $u=\phi +h$ and $v =\phi +\ell$, then $h, \ell $ satisfy
    \begin{align*} h_{tt} +(-\Delta -V +5\phi^4) h =& \mathcal{N}(h,\phi)\\
    \ell_{tt} +(-\Delta -V +5\phi^4) \ell = &\mathcal{N}(\ell,\phi)
    \end{align*}
with initial data
\[\OR{h}(0)=  \sum_{i=1}^n \mu^+_{i} (\rho_{i}, k_{i}\rho_{i})+(\mathcal{R}_0,\,\mathcal{R}_1), \hspace{1cm}\OR{\ell}(0) = \sum_{i=1}^n \mu^+_{i} (\rho_{i}, k_{i}\rho_{i}).\]
As  in the proof for  (1), we define $\widetilde{V}, \widetilde{\phi}$ and consider truncated versions $\tilde{h}, \tilde{\ell}$ 
that satisfy the equation (\ref{eq:truncation}), with data  $\OR{\tilde{h}}(0) =\OR{h}(0)$, $\OR{\tilde{\ell}}(0) =\OR{\ell}(0)$.  
Then from finite speed of propagation we infer  $\OR{h}=\OR{\tilde{h}}, \OR{\ell}=\OR{\tilde{\ell}}$ for $|x|\geq |t|. $
The perturbation lemma~\ref{lem:perturbation} and \eqref{eq:large1} yield the bound
\[\sup_{t\in [0,\infty)}\left\|\OR{\tilde{h}}(t)-\OR{\tilde{\ell}}(t)\right\|_{\HL}\leq C\|\OR{\tilde{h}}(0)-\OR{\tilde{\ell}}(0)\|_{\HL} \leq \frac{C}{K}|\mu^+_{i_0}|.\]
Note that
$$\|\OR{\ell}(0)\|_{\HL}\lesssim |\mu_{i_0}^+|.$$
From  part (1) we know that  there exists $\epsilon_{\ast}(R)>0$   small enough,  such that if $\|\OR{\ell}(0)\|<\epsilon_{\ast}$, then  $\ell(t,x)$ satisfy the channel of energy inequality
\[\int_{|x|\geq t+R} |\partial_t \ell |^2(x,t)\,  dx  \geq c(R) |\mu^+_{i_0}|^2 \hspace{1cm}\text{  for } t\geq 0.\]
Hence we get  for $t\geq 0$,
 \begin{align*}
\int_{|x|\geq t+R} |\partial_t h|^2(x,t)\,dx =&\int_{|x|\geq t+R} |\partial_t\tilde{h}|^2(x,t)\, dx\\
 \geq&   \int_{|x|>t+R} |\partial_t\tilde{\ell}|^2(x,t)\, dx - \frac{C^2}{K^2}|\mu_{i_0}^+|^2
\\
= &\int_{|x|\geq t+R}{|\partial_t \ell |^2}(x,t)\,dx   - \frac{C^2}{K^2}|\mu_{i_0}^+|^2\\
\geq & c(R) |\mu_{i_0}^+|^2 - \frac{C^2}{K^2}|\mu_{i_0}^+|^2
\geq    \frac{c(R)}{2} |\mu_{i_0}^+|^2.
\end{align*}
The last line holds if we pick $K:=K(R)$ large enough.

\noindent 

\smallskip 

 (3) The proof  is similar to  (1) and (2) and we omit the details here.
 \end{proof}

Initially,  the discrete spectral component may not be large enough as required by~(\ref{eq:large1}).
But since any  eigenmode grows exponentially either forward or backward in time, we might expect that it will take over the dispersive  term for large times as long as it is not too small initially. 
The following lemma makes this logic precise.

\begin{lemma}\label{lem:growth}  
Given a steady state solution $\phi$  to the nonlinear equation (\ref{eq:mainequation}),  suppose that $\LL_{\phi} = -\Delta -V +5\phi^4$ has orthonormal eigenmodes $\rho_1, \rho_2, \ldots, \rho_n$ corresponding to eigenvalues $-k_1^2 \leq -k_2^2 \leq \ldots \leq -k_n^2 <0$.
 Suppose that $\OR{u}$ is a solution to equation (\ref{eq:mainequation}) with initial data
\[\OR{u}(0) =(\phi, 0) +   \sum_{i=1}^n \left[\mu^+_i (\rho_i, k_i\rho_i) + \mu^-_i (\rho_i, - k_i\rho_i)\right]  + (\gamma_0, \gamma_1)\]
obeying the orthogonality  conditions $\int \rho_i \gamma_0\, dx =\int\rho_i\gamma_1\, dx =0$, for all $1\leq i \leq n$. Write the solution as
\[\OR{u}(t) =(\phi, 0) + \OR{h}(t).\]
 (1) Suppose    $$|\mu_{i_0}^+|:=\max\{|\mu_i^+|,\,i=1,\dots,n\} \geq  \kappa\|\OR{h}(0)\|_{\HL}$$ for some constant $\kappa>0$. Then for any $\epsilon_{\ast}>0, K>1,$ there exist   $\varepsilon(\kappa, \epsilon_{\ast}, K)>0$ sufficiently small and $T(\kappa, K)>0$ sufficiently large,  such that  if $\|\OR{h}(0)\|_{\HL}<\varepsilon$ then
\[\OR{h}(T) =
\sum_{i=1}^n e^{k_{i}T}\mu_{i}^+(\rho_{i}, k_{i}\rho_{i}) + (\mathcal{R}_0,\,\mathcal{R}_1),\] with
$$\|\OR{h}(T) \|_{\HL}<\epsilon_{\ast}$$
and
\[\|(\mathcal{R}_0,\mathcal{R}_1)\|_{\HL} \leq \frac{1}{K}e^{k_{i_0}T} |\mu_{i_0}^+|\|(\rho_{i_0}, k_{i_0}\rho_{i_0})\|_{\HL}.\]

(2)   Suppose    $$|\mu_{i_0}^-|:=\max\{\mu_i^-:\,i=1,\dots,n\} \geq  \kappa\|\OR{h}(0)\|_{\HL}$$ for some constant $\kappa>0$. Then for any $\epsilon_{\ast}>0, K>1,$ there exist   $\varepsilon(\kappa, \epsilon_{\ast}, K)>0$ sufficiently small and $T(\kappa, K)>0$ sufficiently large,  such that  if $\|\OR{h}(0)\|_{\HL}<\varepsilon$ then
\[\OR{h}(-T) =
\sum_{i=1}^n e^{k_{i}T}\mu_{i}^-(\rho_{i}, -k_{i}\rho_{i}) + (\mathcal{R}_0,\,\mathcal{R}_1),\] with $$\|\OR{h}(-T) \|_{\HL}<\epsilon_{\ast}$$
 and
\[\|(\mathcal{R}_0,\,\mathcal{R}_1)\|_{\HL} \leq \frac{1}{K}e^{k_{i_0}T} |\mu_{i_0}^-|\|(\rho_{i_0}, -k_{i_0}\rho_{i_0})\|_{\HL}.\]
\end{lemma}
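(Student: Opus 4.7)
The plan is to compare $\OR{h}(t)$ with the solution $\OR{h}^L(t)$ of the linearized equation $\partial_{tt}h^L + \mathcal{L}_\phi h^L = 0$ starting from the same initial data. Projecting onto the spectral decomposition of $\mathcal{L}_\phi$, this linear solution has the explicit form
\[
\OR{h}^L(t) = \sum_{i=1}^n \mu_i^+ e^{k_i t}(\rho_i, k_i\rho_i) + \sum_{i=1}^n \mu_i^- e^{-k_i t}(\rho_i, -k_i\rho_i) + \OR{\gamma}^L(t),
\]
where $\gamma^L$ solves $\partial_{tt}\gamma^L + \omega^2\gamma^L = 0$ with data $(\gamma_0,\gamma_1)$. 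Writing $\OR{h}(T)$ in the required form then forces
\[
(\mathcal{R}_0,\mathcal{R}_1) = \sum_i \mu_i^- e^{-k_i T}(\rho_i, -k_i\rho_i) + \OR{\gamma}^L(T) + \bigl(\OR{h}(T)-\OR{h}^L(T)\bigr),
\]
and it suffices to bound each of these three pieces.

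First I would choose $T = T(\kappa, K)$ large enough to control the purely linear contributions. Using $\la\rho_i, h_0\ra = \mu_i^+ + \mu_i^-$ and $\la\rho_i, h_1\ra = k_i(\mu_i^+ - \mu_i^-)$ together with the exponential decay of $\rho_i$, one gets $|\mu_i^\pm| \lesssim \|\OR{h}(0)\|_{\HL} \le |\mu_{i_0}^+|/\kappa$, while energy conservation for $\gamma^L$ gives $\|\OR{\gamma}^L(T)\|_{\HL} \le \|(\gamma_0,\gamma_1)\|_{\HL} \le |\mu_{i_0}^+|/\kappa$. Summing these,
\[
\Bigl\|\sum_i \mu_i^- e^{-k_i T}(\rho_i,-k_i\rho_i) + \OR{\gamma}^L(T)\Bigr\|_{\HL} \lesssim |\mu_{i_0}^+|/\kappa.
\]
Requiring the right side to be at most $\tfrac{1}{2K}e^{k_{i_0}T}|\mu_{i_0}^+|\|(\rho_{i_0},k_{i_0}\rho_{i_0})\|_{\HL}$ reduces to $e^{k_{i_0}T}\ge C K/\kappa$, which determines $T_0(\kappa,K)$.

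With $T$ fixed, the second step is to estimate the nonlinear error $\OR{h}(T) - \OR{h}^L(T)$ on the bounded interval $[0,T]$. Since $h$ satisfies
\[
\partial_{tt}h - \Delta h + (-V+5\phi^4)h + h^5 = -10\phi^3 h^2 - 10\phi^2 h^3 - 5\phi h^4,
\]
I would apply the perturbation Lemma~\ref{lem:perturbation} with background $\tilde u = h^L$ and coefficient $a = -V + 5\phi^4$; both $V \in Y$ and $\phi^4$ lie in $L^{5/2}_x$, so $\|a\|_{L^{5/4}_tL^{5/2}_x([0,T])} \le C\,T^{4/5}$. A direct Strichartz bound for $h^L$ (combining the explicit exponential formulas on the eigenspaces with standard Strichartz for $\gamma^L$) gives $\|h^L\|_{L^5_tL^{10}_x([0,T])} \le C(T)\varepsilon$ whenever $\|\OR{h}(0)\|_{\HL}\le\varepsilon$, and the quadratic-and-higher right-hand side is controlled in $L^1_tL^2_x$ by $\sum_{j=2}^5 \|\phi\|_{L^6}^{5-j}\|h\|_{L^5_tL^{10}_x}^j$. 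The perturbation lemma then supplies $\sup_{t\in[0,T]}\|\OR{h}(t)-\OR{h}^L(t)\|_{\HL}\le C_1(T)\varepsilon^2$. Taking $\varepsilon$ sufficiently small compared to $\epsilon_* e^{-k_1 T}$ and $1/(\kappa C_1(T))$ simultaneously yields $\|\OR{h}(T)\|_{\HL}<\epsilon_*$ (via the crude bound $\|\OR{h}^L(T)\|_{\HL}\lesssim e^{k_1 T}\varepsilon$) and bounds the nonlinear error by $\tfrac{1}{2K}e^{k_{i_0}T}|\mu_{i_0}^+|$ in $\HL$.

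The main obstacle is bookkeeping the order of dependencies: $T = T(\kappa,K)$ must be chosen first, after which the smallness threshold $\varepsilon = \varepsilon(\kappa,\epsilon_*,K)$ is forced to depend on $T$ through both the exponential growth of $h^L$ and the Strichartz constant in the perturbation lemma. Part~(2) is obtained by applying the entire argument above to the time-reversed solution $u(-t,\cdot)$, which leaves the equation invariant and interchanges the roles of $\mu_i^+$ and $\mu_i^-$.
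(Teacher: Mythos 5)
Your plan is essentially the one the paper follows: compare $h$ with the solution $h^L$ of the linearized equation with the same data, split the remainder $(\mathcal{R}_0,\mathcal{R}_1)$ into the stable modes $\sum_i\mu_i^-e^{-k_iT}(\rho_i,-k_i\rho_i)$, the continuous part $\OR{\gamma}(T)$, and the nonlinear error $\OR{h}(T)-\OR{h}^L(T)$, and choose $T=T(\kappa,K)$ first so that $e^{k_{i_0}T}$ beats the stable and continuous contributions, then shrink $\varepsilon$ relative to $T$ to make the nonlinear error negligible. The order of dependencies you emphasize is exactly the one used.

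There is, however, a circularity in the way you invoke Lemma~\ref{lem:perturbation}. With $\tilde u=h^L$ and $a=-V+5\phi^4$, the forcing on the $u$-side of the lemma is $f=-10\phi^3h^2-10\phi^2h^3-5\phi h^4$, which depends on the unknown $h$; but the lemma requires $\|f\|_{L^1_tL^2_x}$ to be given a priori small, so you cannot conclude $\|\OR{h}-\OR{h}^L\|\lesssim\varepsilon^2$ without first knowing $\|h\|_{L^5_tL^{10}_x([0,T])}\lesssim\varepsilon$. The paper supplies precisely this missing a priori control in its Step~1: it runs a continuity (bootstrap) argument directly on Duhamel's formula, splitting $[0,t)$ into unit intervals to prove $\sup_{\tau\in[0,t)}\|\OR{h}(\tau)\|_{\HL}+\|h\|_{L^5_tL^{10}_x([0,t))}\le Ae^{k_1t}\|\OR{h}(0)\|_{\HL}$ provided $e^{3k_1T}\varepsilon$ is small, and only then bounds $\|\OR{h}(T)-\OR{h}^L(T)\|_{\HL}$. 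You can repair your version either by adding the same bootstrap on $\|h\|_{L^5_tL^{10}_x}$ or, more cleanly, by applying Lemma~\ref{lem:perturbation} with $\tilde u=\phi+h^L$ and $a=-V$, in which case $f\equiv 0$ and $e=(\phi+h^L)^5-\phi^5-5\phi^4h^L$ depends only on $h^L$, whose $L^5_tL^{10}_x([0,T])$ norm you control directly. Either way the constants $M$ and $\beta$ in the perturbation lemma become $T$-dependent (through $\|\phi\|_{L^5_tL^{10}_x([0,T])}$ and $\|V\|_{L^{5/4}_tL^{5/2}_x([0,T])}$), so $\epsilon_0(M,\beta)$ also depends on $T$ — which is fine since $T$ is fixed before $\varepsilon$ is chosen, but it should be stated. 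With that gap closed, the rest of your argument, including the time-reversal for part~(2), is correct.
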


\begin{proof}    The proof of (2) is again the time reversal of (1), so it suffices to consider the latter.

\noindent
\textit{Step 1: bound on $h$. }

Writing $u=\phi+h$, we see that  $h$ solves the equation (with $\mathcal{N}$ as above)
    \[h_{tt} +(-\Delta -V +5\phi^4) h =\mathcal{N}(h,\phi).\]
Let  $h^L$  be the solution to the linear equation
\begin{equation}\label{eq:thelinear11}
 h^L_{tt} +(-\Delta -V +5\phi^4) h^L =0
\end{equation}
with data  $\OR{h}^L(0)=\OR{h}(0)$.
We denote by $S(t)g$  the solution to the linear equation (\ref{eq:thelinear11}) with data $(0,g)$ for any $g\in L^2$.   By decomposing the data into continuous and discrete modes,  the Strichartz estimates (\ref{eq:strichartzwithpotential}) for the continuous modes, and the explicit formula for the evolution of discrete modes, we can find absolute constants $C, A\geq 1$ such that
\begin{align}
\sup_{\tau\in [0,t)}\|\OR{S}(\tau)g\|_{\HL} +\|S(\tau)g\|_{L^5_tL^{10}_x([0,t)\times \R^3)} \leq  & C e^{k_1t}\|g\|_{L^2} \label{eq:g} \\
\sup_{\tau\in [0,t)}\|\OR{h}^L(\tau)\|_{\HL} +\|h^L(\tau)\|_{L^5_tL^{10}_x([0,t)\times \R^3)} \leq & \frac{A}{8} e^{k_1t}\|\OR{h^L}(0)\|_{\HL}\label{eq:h-linear}
\end{align}
Denote $\epsilon:=\|\OR{h}(0)\|_{\HL}<\varepsilon$. Now on an interval $[0,T)$ with
$e^{3k_1T}\varepsilon$ sufficiently small, we will use a continuity argument to show that for $t\in [0,T)$
\begin{equation}
\sup_{\tau\in [0,t)}\|\OR{h}(\tau)\|_{\HL} +\|h(\tau)\|_{L^5_tL^{10}_x([0,t)\times \R^3)} \leq  A e^{k_1t}\|\OR{h}(0)\|_{\HL}. 
\label{eq:h-nonlinear1}
\end{equation}
In fact,  assuming that the bound (\ref{eq:h-nonlinear1}) holds for $0\leq t\leq t_0$ with some $0< t_0<T,$ we will show that we actually have
\begin{equation}
\sup_{\tau\in [0,t)}\|\OR{h}(\tau)\|_{\HL} +\|h(\tau)\|_{L^5_tL^{10}_x([0,t)\times \R^3)} \leq  \frac{A}{2} e^{k_1t}\|\OR{h}(0)\|_{\HL}, \hspace{0.5cm} \text{ for all } 0\leq t\leq t_0.
\label{eq:h-nonlinear}
\end{equation}
Then a simple continuity argument finishes the proof of proof of (\ref{eq:h-nonlinear1}).
From Duhamel's formula
\[h(\tau) =h^L(\tau) +\int_0^\tau S(\tau-s)\mathcal{N}(h,\phi)(s)\, ds\]
Denote  $F(\tau,x)= \int_0^\tau S(\tau-s)\mathcal{N}(h,\phi)(s)\, ds$, then from (\ref{eq:g}) we get
\EQ{ 
\sup_{\tau\in [0,t)}(\|F(\tau)\|_{\dot{H}^1} + \|\partial_\tau F(\tau)\|_{L^2}) \leq &  \sup_{\tau\in [0,t)} C \int_0^\tau e^{k_1(\tau-s)}\|\mathcal{N}(h,\phi)(s)\|_{L^2} \, ds\\
\leq & \sup_{\tau\in [0,t)}  C
 e^{k_1\tau}\|\mathcal{N}(h,\phi)(s)\|_{L^1_{s}L^2_x([0,\tau)\times \R^3)}  \\ \leq & C
 e^{k_1t}\|\mathcal{N}(h,\phi)(s)\|_{L^1_{s}L^2_x([0,t)\times \R^3)}
}
and
\EQ{
\|F\|_{L^5_\tau L^{10}_x([0,t)\times \R^3)} \leq & \left \|\int_0^t \chi_{\{\tau-s \geq 0\}}\|S(\tau-s)\mathcal{N}(h,\phi)(s)\|_{L^{10}_x}ds\right\|_{L^5_\tau[0,t) }\\
\leq & \int_0^t\left \|S(\tau-s)\mathcal{N}(h,\phi)(s)\right\|_{L^5_{\tau}L^{10}_x([s,t)\times\R^3)}\, ds \\
\leq & C \int_0^t e^{k_1(t-s)}\|\mathcal{N}(h,\phi)(s)\|_{L^2}\,   ds  \leq  C
 e^{k_1t}\|\mathcal{N}(h,\phi)(s)\|_{L^1_{s}L^2_x([0,t)\times \R^3)}.
}
Note that $|\mathcal{N}(h,\phi)(s)| \lesssim\sum_{j=2}^5 |\phi|^{5-j} |h|^j$.  Assuming the bound (\ref{eq:h-nonlinear1}) on $[0,t_0)$,  for any $t\in [0,t_0)$ we pick an integer $J_0\geq 0$ such that $ J_0 <t\leq J_0+1$.  This leads to 
\begin{align*}\|\phi^3h^2\|_{L^1_{s}L^2_x([0,t)\times \R^3)}= &\sum_{q=0}^{J_0}\|\phi^3h^2\|_{L^1_{s}L^2_x([q,q+1)\times \R^3)} +\|\phi^3h^2\|_{L^1_{s}L^2_x([J_0,t)\times \R^3)}  \\
\lesssim &\sum_{q=0}^{J_0} \|h\|^2_{L^5_{s}L^{10}_x([q,q+1)\times \R^3)} + \|h\|^2_{L^5_{s}L^{10}_x([J_0,t)\times \R^3)}\\
\lesssim & \sum_{q=0}^{J_0}(Ae^{k_1(q+1)}\epsilon)^2 +(Ae^{k_1t}\, \epsilon)^2
\lesssim  A^2\epsilon^2 e^{2k_1t}
 \end{align*}
 We can control the other terms in $\mathcal{N}(h,\phi)$ in an analogous fashion, whence 
 \[\|\mathcal{N}(h,\phi)(s)\|_{L^1_{s}L^2_x([0,t)\times \R^3)}\lesssim \sum_{j=2}^5(A\, \epsilon \, e^{k_1t})^j\hspace{1cm} \text{  for } 0\leq t<t_0.\]
Using (\ref{eq:h-linear}),  we therefore obtain 
 \begin{align*}
&\sup_{\tau\in [0,t)}\|\vec{h}(\tau)\|_{\HL} +\|h\|_{L^5_tL^{10}_x([0,t)\times \R^3)}\\
&\leq  \sup_{\tau\in [0,t)}\left\|\OR{h}^L(\tau)\right\|_{\HL} +\|h^L\|_{L^5_tL^{10}_x([0,t)\times \R^3)} + Ce^{k_1t} \|\mathcal{N}(h,\phi)(s)\|_{L^1_s L^2_x([0,t)\times\R^3)}\\
&\leq  \frac{A}{8}e^{k_1t}\epsilon + Ce^{k_1t}\left[\sum_{j=2}^5(A\, e^{k_1t}\, \epsilon)^j\right]\leq \frac{A}{2} e^{k_1t}\epsilon,
\end{align*}
provided $e^{3k_1T}\varepsilon\ll 1$ is sufficiently small. Hence (\ref{eq:h-nonlinear1}) holds on $[0,T)$ as long as  $T, \varepsilon$ satisfy the relation $e^{3k_1T}\varepsilon< \epsilon_1$ with a small fixed constant~$\epsilon_1$.

\noindent
\textit{Step 2: Decide the constants.}\\
Now we consider the linear solution $h^L$ with data
\[\OR{h}^L(0)=\sum_{i=1}^n \big[\,\mu^+_i (\rho_i, k_i\rho_i) + \mu^-_i (\rho_i, - k_i\rho_i)\,\big]  + (\gamma_0, \gamma_1),\]
 then we have the explicit formula  for the linear solution
\[\OR{h}^L(t) = \sum_{i=1}^n \mu^+_i e^{k_it} (\rho_i, k_i\rho_i) + \sum_{i=1}^n \mu^-_i e^{-k_it}(\rho_i, -k_i\rho_i) + \OR{\gamma}(t).\]
For any given $\kappa,\,K$, we can choose a large constant $T(\kappa, K)$ such that
\begin{align}\left\|\sum_{i=1}^n \mu^-_i e^{-k_iT}(\rho_i, -k_i\rho_i) + \OR{\gamma}(T)\right\|_{\HL}  \lesssim     \frac{T}{\kappa}|\mu^+_{i_0}|
 \leq \frac{1}{2K} |\mu^+_{i_0}| e^{k_{i_0}T} \|(\rho_{i_0}, k_{i_0}\rho_{i_0})\|_{\HL}\label{eq:T}
 \end{align}
Next from Duhamel's formula and the estimate of $\mathcal{N}$ in step 1, we have 
\begin{align}
\left\|\OR{h}(T)-\OR{h}^L(T)\right\|_{\HL} = & \left\| \int_0^T S(T-s)\mathcal{N}(h,\phi)(s)\, ds \right\|_{\HL} \notag\\
\leq & Ce^{k_1T}\left[\sum_{j=2}^5(A\epsilon e^{k_1T})^j\right] \notag
 <\frac{1}{2K}|\mu^+_{i_0}| \label{eq:smallN},
\end{align}
if $ e^{3k_1T}\varepsilon $ is sufficiently small.
 
Hence we have
$\OR{h}(T) =\sum_{i=1}^n \mu^+_i e^{k_iT} (\rho_i, k_i\rho_i)  +(\mathcal{R}_0,\,\mathcal{R}_1)$
with \[(\mathcal{R}_0,\,\mathcal{R}_1)=   \sum_{i=1}^n \mu^-_i e^{-k_it}(\rho_i, -k_i\rho_i) + \OR{\gamma}(t) + \OR{h}(T)-\OR{h}^L(T)\] and $$\|(\mathcal{R}_0,\,\mathcal{R}_1)\|_{\HL}\leq \frac{1}{K}  |\mu^+_{i_0}| e^{k_{i_0}T} \|(\rho_{i_0}, k_{i_0}\rho_{i_0})\|_{\HL}.$$
We also have
\[\|\OR{h}(T)\|_{\HL}\leq \sum_{i=1}^n e^{k_iT}|\mu_i^+| +\|(\mathcal{R}_0,\,\mathcal{R}_1)\|_{\HL} \lesssim e^{k_1T}\varepsilon<\epsilon_{\ast},\]
by choosing $\varepsilon$ sufficiently small.
\end{proof}

\begin{remark}\label{remark:largemode}
While part (1) of Lemma~\ref{lem:growth} guarantees that at time $T$   the unstable mode $$e^{k_{i_0}T}\mu_{i_0}^+(\rho_{i_0},k_{i_0}\rho_{i_0})$$ dominates the continuous part and the stable mode, we cannot be sure of its size compared to the other unstable modes, which might grow faster. However, we can easily conclude that the largest mode at time $T$,  say $e^{k_jT}\mu_j^+(\rho_{j},k_{j}\rho_{j})$,  satisfies    \[\left\|e^{k_jT}\mu_j^+(\rho_{j},k_{j}\rho_{j})\right\|_{\HL}\geq \frac{1}{n+1}\|\OR{h}(T)\|_{\HL}.\]
 \end{remark}

\section{Global center stable manifold of unstable excited states}
\label{sec:5}

In this section we prove our main result. Before giving the detailed proof, let us briefly summarize the main ideas in physical terms. The crucial fact that we establish can be explained roughly as follows. Take any solution $U(t)$ which scatters to an unstable steady state $\phi$. We have shown in Section~2 that in a small neighborhood of $\OR{U}(0)$ in the energy space $\HL$, there exists a local, finite co-dimensional manifold $\mathcal{M}$ such that if $\OR{u}(t)$ starts on the manifold, i.e., if  $\OR{u}(0)\in\mathcal{M}$, then $\OR{u}(t)$ stays close to $\OR{U}(t)$ for all positive times and scatters to $(\phi,0)$. On the other hand, if $\OR{u}(t)$ starts in a small neighborhood of $\OR{U}(0)$ but {\it off} the manifold, then
$$\sup_{t\ge 0}\left\|\OR{u}(t)-\OR{U}(t)\right\|_{\HL}\ge \epsilon_1>0,$$
no matter how small $\left\|\OR{u}(0)-\OR{U}(0)\right\|_{\HL}$ is.  Suppose that $\left\|\OR{u}(0)-\OR{U}(0)\right\|_{\HL}$ is sufficiently small, then dynamically $\OR{u}(t)$ will stay close to $\OR{U}(t)$ for a long time, say for $0\leq t\leq T_0$. Since $\OR{U}(t)$ scatters to $(\phi,0)$, we can write (in the energy space)
$$\OR{U}(t)\approx (\phi,\,0)+\OR{U}^L(t)$$
for large times. Hence for large $ t\leq T_0$,
$$\OR{u}(t)\approx (\phi,\,0)+\OR{U}^L(t)$$
in the energy space. After time $T_0$, $\OR{u}(t)$ starts to deviate from $\OR{U}(t)$ as $\OR{u}(0)\not\in\mathcal{M}$. By an expansion of the energy functional near the steady state, we shall show that the deviation is due to growth in the unstable mode. Then it is not hard to conclude that at a large time $T_1>T_0$, $\OR{u}(t)-(\phi,0)-\OR{U}^L(t)$ concentrates most of its energy in the discrete mode and has energy $\gtrsim \epsilon_1$. These arguments  finally set the stage for us to apply the channel of energy inequalities proved in the previous section. We will show that besides the radiated energy that $\OR{U}^L$ carries to spatial infinity, $\OR{u}(t)$ emits a {\it second radiation}. The total radiated energy for $\OR{u}(t)$ will therefore exceed the radiated energy for $\OR{U}(t)$ by a fixed amount. Now note that $\OR{u}(t)$ has almost the same amount of energy as $\OR{U}(t)$, a comparison argument of the energy in the local region then implies that $\OR{u}(t)$, having strictly less energy than $(\phi,0)$ in the local region, can no longer scatter to $(\phi,0)$. Hence, locally the set $\mathcal{M}_{\phi}$ of all initial data for which the solution scatters to $(\phi,0)$ coincide with $\mathcal{M}$. Thus the set $\mathcal{M}_{\phi}$ has a manifold structure. This is the key property showing that scattering to unstable steady states is non-generic.  

Now we turn to the main argument.
Let us first compute the expansion of energy around any steady state $(\phi,0)$.

\begin{lemma} \label{lem:expansion1}
Let  $(u_0,u_1)=  (\phi,0)+(\Lambda_0,\Lambda_1)$, where $(\Lambda_0,\Lambda_1)\in\HL$.  Assume that
$$\|\Lambda_0\|_{L^6(\R^3)} < \beta\ll 1,$$
then we have
\begin{equation}\mathcal{E}((u_0,u_1)) = \mathcal{E}(\phi, 0) +\frac12(\LL_\phi\Lambda_0, \Lambda_0) +\frac12 (\Lambda_1, \Lambda_1) + O(\beta^3), \label{eq:Eu0}
\end{equation}
where $\LL_{\phi} = -\Delta -V +5\phi^4$.\\
Suppose $\LL_{\phi}$ has orthonormal eigenmodes $\rho_1, \rho_2, \ldots, \rho_n$ corresponding to eigenvalues $-k_1^2 \leq -k_2^2 \leq \ldots \leq -k_n^2 <0$.
If we further decompose 
\begin{align}
(\Lambda_0,\Lambda_1)= &(X_0,X_1)+ (w_0,w_1),\label{eq:XX}\\
(w_0,w_1)= &\sum_{i=1}^n \left[\mu^{+}_i (\rho_i, k_i\rho_i) +\mu_i^{-} (\rho_i, -k_i\rho_i)\right] + (\gamma_0, \gamma_1),
\end{align}
 with $(X_0,\,X_1)\in\HL$ and the orthogonality condition $\int   \rho_j\gamma_0\, dx= \int   \rho_j\gamma_1\, dx =0$, for all $1\leq j\leq n.$ Then we have
 \begin{align}
 \mathcal{E}((u_0,u_1)) =&  \mathcal{E}(\phi, 0)  + \frac12\big[ (\LL_{\phi}X_0, X_0) +(X_1,X_1) \big]+\frac12\big[ (\LL_\phi \gamma_0,  \gamma_0)+ (\gamma_1,\gamma_1)\big]  - \sum_{i=1}^n 2\mu^+_i\mu^-_ik_i^2\notag\\ & +(\LL_{\phi} X_0 , w_0) 
 + (X_1, w_1)  + O(\beta^3).
 \label{eq:Eu1}
\end{align}
\end{lemma}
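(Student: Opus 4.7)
The proof is essentially algebraic and splits naturally into two steps: first, a Taylor expansion of $\mathcal{E}$ around the steady state to obtain (\ref{eq:Eu0}); second, a block-diagonalization of the resulting quadratic form under the decomposition (\ref{eq:XX}). I do not expect any substantive obstacle; the lemma is bookkeeping.

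For the first step, I would substitute $u_0 = \phi + \Lambda_0$ and $\partial_t u_0 = \Lambda_1$ directly into the energy. Expanding $|\nabla u_0|^2$, $Vu_0^2$, and $u_0^6$ produces three types of terms. The linear-in-$\Lambda_0$ contributions collect to $\int(-\Delta \phi - V\phi + \phi^5)\Lambda_0\, dx$, which vanishes because $\phi$ solves the stationary equation. The quadratic-in-$(\Lambda_0,\Lambda_1)$ contributions are exactly $\tfrac12(\mathcal{L}_\phi \Lambda_0,\Lambda_0) + \tfrac12(\Lambda_1,\Lambda_1)$ by the definition of $\mathcal{L}_\phi$. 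The remainder consists of the pieces of $u_0^6/6$ of the form $\int \phi^{6-j}\Lambda_0^j \, dx$ for $j=3,4,5,6$. Each is estimated by H\"older in $L^6$ as $\|\phi\|_{L^6}^{6-j}\|\Lambda_0\|_{L^6}^{j} \lesssim \beta^j$; since $\phi$ is a finite-energy steady state we have $\|\phi\|_{L^6}<\infty$, and because $\beta\ll 1$ all of these are $O(\beta^3)$. This yields (\ref{eq:Eu0}).

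For the second step, I would plug $\Lambda_0 = X_0 + w_0$ and $\Lambda_1 = X_1 + w_1$ into the quadratic form from (\ref{eq:Eu0}) and use bilinearity to split into three blocks: the pure $X$ block $\tfrac12[(\mathcal{L}_\phi X_0,X_0)+(X_1,X_1)]$, the cross block $(\mathcal{L}_\phi X_0,w_0) + (X_1,w_1)$, and the pure $w$ block. To evaluate the pure $w$ block I would use $\mathcal{L}_\phi \rho_i = -k_i^2\rho_i$, the orthonormality of $\{\rho_i\}$, and the orthogonality conditions $\langle\rho_i,\gamma_0\rangle = \langle\rho_i,\gamma_1\rangle = 0$, obtaining
\begin{align*}
(\mathcal{L}_\phi w_0, w_0) &= -\sum_{i=1}^n k_i^2(\mu_i^+ + \mu_i^-)^2 + (\mathcal{L}_\phi \gamma_0,\gamma_0),\\
(w_1, w_1) &= \sum_{i=1}^n k_i^2(\mu_i^+ - \mu_i^-)^2 + (\gamma_1,\gamma_1).
\end{align*}
The elementary identity $\tfrac12[(\mu_i^+ - \mu_i^-)^2 - (\mu_i^+ + \mu_i^-)^2] = -2\mu_i^+\mu_i^-$ then gives the indefinite coupling term $-\sum_i 2\mu_i^+\mu_i^- k_i^2$, while the $\gamma$-contributions produce $\tfrac12[(\mathcal{L}_\phi \gamma_0,\gamma_0)+(\gamma_1,\gamma_1)]$. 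Assembling the three blocks and keeping the $O(\beta^3)$ remainder from the first step delivers (\ref{eq:Eu1}). The only potentially delicate points are (i) the cross terms between the discrete modes and the continuous part $\gamma$ vanish precisely because of the orthogonality built into the definition of $(\gamma_0,\gamma_1)$, and (ii) each stable/unstable pair $(\mu_i^+,\mu_i^-)$ of a given eigenvalue yields a sign-indefinite cross term whose analog in $(w_1,w_1)$ has the opposite sign, so that the sum fails to cancel — this is exactly the source of the $-2\mu_i^+\mu_i^-k_i^2$ term.
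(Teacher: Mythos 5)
Your proposal is correct and follows essentially the same route as the paper: a direct Taylor expansion of $\mathcal{E}$ around $(\phi,0)$, with the linear term killed by the stationary equation and the remainder controlled by $\|\Lambda_0\|_{L^6}^3\lesssim\beta^3$, followed by bilinearity to split the quadratic form into $X$, cross, and $w$ blocks and evaluation of the $w$ block via $\mathcal{L}_\phi\rho_i=-k_i^2\rho_i$ together with orthonormality and the orthogonality of $(\gamma_0,\gamma_1)$ to the $\rho_i$. The algebraic identity $\tfrac12\big[(\mu^+-\mu^-)^2-(\mu^++\mu^-)^2\big]k_i^2=-2\mu^+\mu^-k_i^2$ is exactly how the paper produces the $-\sum_i 2\mu_i^+\mu_i^-k_i^2$ term, so there is nothing to add.
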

\begin{proof} The proof is by direct computation
\begin{align*}
\mathcal{E}((u_0,u_1)) =& \mathcal{E}((\phi, 0)+(\Lambda_0,\Lambda_1)) \\
=&  \int \frac{|\nabla \phi +\nabla \Lambda_0 |^2}{2} +\frac{| \Lambda_1|^2}{2} -\frac{V(\phi+\Lambda_0)^2}{2} +\frac{(\phi+\Lambda_0)^6}{6}\,dx\\
=& \int \frac12 \Big(  |\nabla \phi|^2   - \frac12 V\phi^2  +\frac16 \phi^6 \Big) \,dx +\int  (-\Delta \phi  - V\phi + \phi^5)\Lambda_0  \, dx\\
&+ \int \frac12  \Big[ |\nabla \Lambda_0|^2  - \frac12 V\Lambda_0^2  +\frac52 \phi^4\Lambda_0^2 + \frac12  |\Lambda_1 |^2 +\frac16  \sum_{j\geq 3}C_6^j \phi^{6-j}\Lambda_0^j \Big] \, dx\\
  =& \mathcal{E}(\phi, 0) +\frac12(\LL_\phi\Lambda_0, \Lambda_0) +\frac12 (\Lambda_1, \Lambda_1) + O(\beta^3).
\end{align*}
This finishes the proof of (\ref{eq:Eu0}).

Next we further  expand the energy functional using (\ref{eq:XX})
 \begin{align*}
&\mathcal{E}((u_0,u_1)) \\
&=  \mathcal{E}(\phi, 0) +\frac12\left(\LL_\phi (X_0+w_0), X_0+w_0\right) +\frac12 \left(X_1+w_1, X_1+w_1\right) + O(\beta^3)\\
 &=\mathcal{E}(\phi, 0)   + \frac12\bigg[(\LL_\phi w_0,  w_0) +   (w_1, w_1)\bigg] + \frac12\bigg[ (\LL_{\phi}X_0, X_0) +(X_1,X_1) \bigg]\notag\\
&  \quad \qquad + (\LL_{\phi} X_0 , w_0)
 + (X_1, w_1)   + O(\beta^3) .
\end{align*}
Since $\LL_\phi \rho_i = -k^2_i \rho_i$,    we get
\begin{align*}(\LL_\phi w_0,  w_0)=&-\sum_{i=1}^n (\mu^+_i+\mu^-_i)^2k_i^2  + (\LL_\phi \gamma_0,  \gamma_0),\\
(w_1, w_1) = &\sum_{i=1}^n (\mu^+_i-\mu^-_i)^2k_i^2+(\gamma_1,\gamma_1). \end{align*}
Combining the calculations above, we get (\ref{eq:Eu1}).
\end{proof}

Now we are ready to present the main idea of our paper, which is crucial to conclude that the set of initial data for which the solution scatters to an unstable steady state $(\phi,0)$ has a manifold structure, and hence is a ``thin set".
\begin{theorem}\label{th:nongeneric}
Let $V\in Y$ be a potential such that equation (\ref{eq:mainequation}) has only finitely many steady states, all of which are hyperbolic. \footnote{By a simple adaptation of the result in \cite{JiaLiuSchlagXu}, we know that such $V$ are dense in $Y$.} Suppose that  the finite energy
solution $\overrightarrow{U}(t)$ to equation (\ref{eq:mainequation})
scatters to an unstable excited state $(\phi,0)$. Let $\mathcal{M}$
be the local center-stable  manifold around $\overrightarrow{U}(0)$
and let $\epsilon_0,\,\epsilon_1$ be as defined in Theorem
\ref{th:localmanifold}. Then there exist $\epsilon$ with
$0<\epsilon<\epsilon_1<\epsilon_0$ and $\delta(\epsilon_1)\gg
\epsilon $, such that for any solution $u$ with finite energy
initial data $(u_0,u_1)\notin \mathcal{M}$ with
$$\left\|(u_0,u_1)-\overrightarrow{U}(0)\right\|_{\dot{H}^1\times
L^2}<\epsilon,$$ we can find $A>0$ such that for all $t\ge A$
\begin{equation}\label{eq:emitmoreenergy}
\int_{|x|\ge t-A} \left[\frac{|\nabla
u|^2}{2}+\frac{(\partial_tu)^2}{2}\right](x,t)\,dx\ge
\mathcal{E}(\overrightarrow{U}(t))-\mathcal{E}((\phi,0))+\delta.
\end{equation}
As a consequence, $\OR{u}(t)$ will  not scatter to $(\phi,0)$.
\end{theorem}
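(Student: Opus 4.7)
The plan is to exhibit a \emph{secondary outgoing radiation pulse} in $\OR{u}$ which, added to the primary radiation $\OR{U}^L$ inherited from the scattering of $\OR{U}$, raises the exterior energy of $\OR{u}(t)$ above that of $\OR{U}(t)$ by a fixed amount $\delta\sim c(R)\epsilon_1^2$. This pulse originates from an exponentially growing unstable mode of the deviation $\OR{u}-\OR{U}$, is captured by Lemma~\ref{nonlinear}(2), and, being outgoing, persists in $\{|x|\geq t-A\}$ for all $t\geq A$. The non-scattering consequence is then immediate: were $\OR{u}$ to scatter to $(\phi,0)$, its exterior energy at $t\to\infty$ would converge to $\mathcal{E}(\OR{u})-\mathcal{E}((\phi,0))$, which by energy conservation differs from $\mathcal{E}(\OR{U})-\mathcal{E}((\phi,0))$ only by $O(\epsilon)\ll\delta$, contradicting~\eqref{eq:emitmoreenergy}.

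\emph{Step 1 (exit time and dominant unstable mode).} Since $(u_0,u_1)\notin\M$, the unconditional uniqueness part of Theorem~\ref{th:localmanifold} forces
\[
T^{\ast}:=\inf\big\{t\geq 0:\;\|\OR{u}(t)-\OR{U}(t)\|_{\HL}\geq \epsilon_1/2\big\}
\]
to be finite, with $T^{\ast}(\epsilon)\to\infty$ as $\epsilon\to 0$. Fix $R\gg 1$ and the constants $\epsilon_\ast=\epsilon_\ast(R)$, $K=K(R)$ of Lemma~\ref{nonlinear}(2); take $\epsilon$ so small that $T^\ast$ is huge, and pick $T_1\leq T^\ast$ with $\|\OR{u}(T_1)-\OR{U}(T_1)\|_{\HL}\approx \epsilon_1/2<\epsilon_\ast$ and $\|\OR{U}(T_1)-(\phi,0)-\OR{U}^L(T_1)\|_{\HL}$ negligible by scattering. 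Setting $\OR{h}(T_1):=\OR{u}(T_1)-(\phi,0)-\OR{U}^L(T_1)$ and expanding
\[
\OR{h}(T_1)=\sum_i\left[\mu_i^+(\rho_i,k_i\rho_i)+\mu_i^-(\rho_i,-k_i\rho_i)\right]+(\gamma_0,\gamma_1),
\]
the central claim to be proved is that some $|\mu_{i_0}^+|\gtrsim\epsilon_1$ dominates the remaining modes and $\|(\gamma_0,\gamma_1)\|_{\HL}$ by the factor $K$. This will follow from tracking the ODE system~\eqref{system-2} for the $\LL_\phi$-modes of $\OR{\eta}(t):=\OR{u}(t)-\OR{U}(t)$: the stability condition~\eqref{condition2} must fail for $(u_0,u_1)\notin\M$ (else unconditional uniqueness places $(u_0,u_1)\in\M$), producing an exponentially growing piece that reaches amplitude $\epsilon_1/2$ precisely at $T^{\ast}$; Lemma~\ref{lem:growth} will then be used to calibrate $T_1$ so that this growing piece dominates. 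The transfer between the $\LL_\phi$-decompositions of $\OR{\eta}(T_1)$ and $\OR{h}(T_1)$ is harmless since $\|\OR{U}^L(T_1)\|_{L^6(|x|\leq R')}\to 0$ as $T_1\to\infty$ for any fixed $R'$, making the projections of $\OR{U}^L(T_1)$ onto the exponentially localized $\rho_i$ negligible.

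\emph{Steps 2--3 (secondary channel of energy and persistent addition).} Approximate $\OR{U}^L$ in $\HL$ by a free wave with data compactly supported in $B_{R_0}(0)$; strong Huygens then confines $\OR{U}^L(T_1)$ to the annulus $\{\,||x|-T_1|\leq R_0\}$ and forces it to vanish on $\{|x|\leq T_1-R_0\}$. Define $\OR{v}(t)$ by solving~\eqref{eq:mainequation} with $\OR{v}(T_1)=(\phi,0)+\OR{h}(T_1)$. By construction $\OR{v}(T_1)=\OR{u}(T_1)$ on $\{|x|\leq T_1-R_0\}$ and the hypotheses of Lemma~\ref{nonlinear}(2) are satisfied with dominant mode $\mu_{i_0}^+$, yielding
\[
\int_{|x|\geq (t-T_1)+R}|\partial_t v|^2(x,t)\,dx\geq c(R)|\mu_{i_0}^+|^2\gtrsim c(R)\epsilon_1^2,\qquad t\geq T_1.
\]
Finite speed of propagation identifies $\OR{u}=\OR{v}$ on $\{|x|\leq 2T_1-R_0-(t-T_1)\}$, and the channel of energy of $\OR{v}$ concentrates on a thin shell near $|x|\sim(t-T_1)+R$ (by the exponential decay of $\rho_{i_0}$, as in the proof of Lemma~\ref{lem:linear-channel}). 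Choosing $T_2=T_1+O(1)$ so the shell fits into the matching region, we transfer $\gtrsim c(R)\epsilon_1^2$ of $\partial_t$-energy to $\OR{u}(T_2)$, localized inside $\{|x|\geq T_2-A\}$ for $A:=T_1$. For $t>T_2$, the shell propagates as an outgoing radiation and by the exterior energy flux identity (whose flux term $(\partial_r u+\partial_t u)^2\,dS$ vanishes for outgoing waves, modulo decaying nonlinear corrections) the shell stays inside $\{|x|\geq t-A\}$ for all $t\geq A$. The primary radiation $\OR{U}^L(t)$ sits in the separated outer annulus $\{|x|\sim t\}$, where $\OR{u}\approx\OR{U}$ by continuous dependence of the scattering radiation on the initial data. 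Adding the two disjoint contributions,
\[
\int_{|x|\geq t-A}\tfrac12\big(|\nabla u|^2+|\partial_t u|^2\big)(x,t)\,dx\geq \int_{|x|\geq t-A}\tfrac12\big(|\nabla U|^2+|\partial_t U|^2\big)(x,t)\,dx+\tfrac{c(R)}{2}\epsilon_1^2
\]
for all $t\geq A$, and since the right-hand side tends to $\mathcal{E}(\OR{U})-\mathcal{E}((\phi,0))$ as $t\to\infty$ (the $V$- and $\phi$-dependent terms vanish in the exterior by decay), this gives~\eqref{eq:emitmoreenergy} with $\delta=c(R)\epsilon_1^2/4$.

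\emph{Main obstacle.} The hardest part will be Step~1: identifying and quantifying the dominant unstable mode of $\OR{h}(T_1)$ when the reference orbit is the scattering solution $\OR{U}$ rather than the steady state itself. This requires carefully transferring the spectral analysis of $\OR{\eta}(t)=\OR{u}(t)-\OR{U}(t)$ (governed by~\eqref{system-2}--\eqref{condition2}) to that of $\OR{h}(t)=\OR{u}(t)-(\phi,0)-\OR{U}^L(t)$, and applying Lemma~\ref{lem:growth} at just the right calibration of $T_1$ to catch the growing mode dominating while $\OR{u}$ is still inside $B_{\epsilon_1}(\OR{U})$. A secondary difficulty is the non-small primary radiation $\OR{U}^L(T_1)$, which blocks direct use of Lemma~\ref{nonlinear} on $\OR{u}$; it is bypassed via strong Huygens truncation, comparison with the auxiliary solution $\OR{v}$, finite speed of propagation, and the persistence of outgoing radiation in the expanding exterior.
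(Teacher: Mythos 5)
Your plan reproduces the paper's architecture in Steps~2--3: it reuses the auxiliary solution $(\phi,0)+\OR{h}$ started at a large time, the strong Huygens truncation of $\OR{U}^L$, the finite-speed matching of $\OR{u}$ with the auxiliary solution plus radiation inside a shrinking cone, Lemma~\ref{lem:growth} to amplify a non-trivial discrete mode to $K$-dominance, and Lemma~\ref{nonlinear} to produce the second radiation; the concluding comparison of radiated energies against $\|\OR{U}^L\|^2_{\HL}=\mathcal{E}(\OR{U})-\mathcal{E}(\phi,0)$ is also the same.

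The genuine divergence (and the genuine gap) is in Step~1. The paper establishes that at the exit time $T_2$ (defined by $\|\OR{u}(T_2)-\OR{U}(T_2)\|_{\HL}=\epsilon_1$) the decomposition of $\OR{w}=\OR{u}(T_2)-(\phi,0)-\OR{U}^L(T_2)$ along the $\LL_{\phi}$-eigenmodes satisfies $\sum_i(|\mu_i^+|^2+|\mu_i^-|^2)\gtrsim\epsilon_1^2$, not by tracking the ODE~\eqref{system-2}, but by a second-order expansion of the conserved energy around $(\phi,0)$ (Lemma~\ref{lem:expansion1}) combined with $|\mathcal{E}(\OR{u})-\mathcal{E}(\OR{U})|\lesssim\epsilon$, the coercivity of $\LL_{\phi}$ on the continuous subspace, and---crucially---a careful spatial-separation estimate showing the cross terms $(\LL_\phi U^L(T_2),w_0)$ and $(U^L_t(T_2),w_1)$ are $O(\delta_1)$ (because $\OR{U}^L(T_2)$ lives near $|x|\sim T_2$ while $\OR{w}$ lives near the origin by finite speed). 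You propose instead to derive the discrete-mode lower bound from the failure of the stability condition~\eqref{condition2} for $\OR{\eta}=\OR{u}-\OR{U}$. This is plausible, and if carried out it would automatically give $|\mu_{i_0}^+|\gg|\mu_{i_0}^-|$ (since $\dot{\tilde\lambda}_{i_0}\approx k_{i_0}\tilde\lambda_{i_0}$ once the growing exponential dominates), thereby eliminating the case that the paper must treat separately---Case~2, $|\mu_{\max}|=|\mu_{i_0}^-|$, which the paper excludes via a \emph{backward} channel of energy and a contradiction with the finite exterior energy of the initial data. But as written the argument is only a promise: you have not shown that the \emph{continuous} part $\tilde\gamma$ of $\OR{\eta}$ has not also reached $\sim\epsilon_1$ at the exit time (this requires the reversed Strichartz bound for $\tilde\gamma$ together with the observation that its forcing $\tilde N_c$ is $O(\epsilon\epsilon_1+\epsilon_1^2)\ll\epsilon_1$), nor have you handled the fact that several unstable rates $k_i$ can compete, nor quantified the transfer between the decompositions of $\OR{\eta}(T_1)$ and $\OR{h}(T_1)$ beyond noting the $L^6$-smallness of $\OR{U}^L$. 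You correctly flag Step~1 as ``the hardest part,'' but the paper's energy-expansion route, which sidesteps the dynamics entirely by using only conservation and coercivity, is what closes it; without either that or a rigorous version of your ODE argument, the proof is incomplete.
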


\begin{proof} We divide our proof into several steps.\\
\noindent
\textit{Step 1: Set up the parameters.}\\
By the local center-stable  manifold theorem of Section~\ref{sec:4}, the locally defined finite co-dimensional manifold $\mathcal{M}$ satisfies the property that any solution to equation (\ref{eq:mainequation}) with initial data on $\mathcal{M}$ scatters to $(\phi,0)$. Moreover, if a solution $\overrightarrow{u}(t)$ with initial data $(u_0,u_1)\in B_{\epsilon_1}(\overrightarrow{U}_0)$ satisfies
\begin{equation}\label{eq:eps 1}
\|\overrightarrow{u}(t)-\overrightarrow{U}(t)\|_{\dot{H}^1\times
L^2}<\epsilon_1 \,\,{\rm for\,\,all\,\,}t\ge 0,
\end{equation}
then $(u_0,u_1)\in \mathcal{M}$. Take
$\epsilon<\epsilon_1$ sufficiently small to be chosen below. Since
the solution $\overrightarrow{U}(t)$ scatters to $(\phi,0)$ as
$t\to\infty$, denoting  by $\overrightarrow{U}^L$  the scattered
linear wave, we have the property that
\begin{equation}\label{eq:convergencewithradiation}
\lim_{t\to\infty}\left\|\overrightarrow{U}(t)-\overrightarrow{U}^L(t)-(\phi,0)\right\|_{\dot{H}^1\times
L^2}=0.
\end{equation}
 This implies that
 \begin{equation}\mathcal{E}(\OR{U})= \mathcal{E}(\phi,0) + {\frac12}\left\|\OR{U}^L\right\|_{\HL}^2\label{eq:energysum}
\end{equation}
By (\ref{eq:convergencewithradiation}), the fact that
$\phi\in\dot{H}^1(\R^3)$ and $U^L\in L^5_tL^{10}_x( [0,\infty)\times
\R^3)$, for any small $ \delta_1>0$, we can first fix some large $L$ and then choose $T_1>L$ sufficiently large, such that for all $t\ge T_1$,
\begin{itemize}
\item (Free wave small in $L^6$ norm)
\begin{equation}
\|U^L(t)\|_{L^6(\R^3)} \leq \delta_1 \label{eq:smallL6}
\end{equation}
\item (Closeness of $\OR{U}$ to $\OR{U}^L+(\phi,0)$ and choice of the bounded region)
\begin{equation}\label{eq:decomp}
\|\overrightarrow{U}(t)-\overrightarrow{U}^L(t)-(\phi,0)\|_{\dot{H}^1\times
L^2} +\|\OR{U}^L(0)\|_{\HL(|x|\geq L)} +\|\phi\|_{\dot{H}^1(|x|\ge L)}\leq \delta_1;
\end{equation}
\item (Most energy of the free radiation is exterior)
\begin{equation}\label{eq:mostlyexterior}
\int_{|x|\ge t-T_1+L}|\nabla_{x,t}U^L|^2(t,x)\,dx\ge
\int_{\R^3}|\nabla_{t,x}U^L|^2(t,x)\,dx-\delta_1^2;
\end{equation}
\item (Control on the Strichartz norm of the radiation)
Let $$D:=\{(x,t):\,|x|\leq T_1+L-t,\,0\leq t\leq T_1\}.$$ Then we
have
\begin{equation}\label{eq:smallfreewave}
\|U^L\|_{L^5_tL^{10}_x(  (0,\infty)\times \R^3\backslash
D)}<\delta_1.
\end{equation}
\end{itemize}

\medskip

\noindent We remark that (\ref{eq:mostlyexterior}) is  a consequence of the strong Huygens principle and approximation by free waves with compactly supported initial data.   (\ref{eq:mostlyexterior}) ensures that $U^L$ can
essentially be taken as zero for our purposes inside the region
$|x|\leq t-T_1+L$ for $t\geq T_1$, which will be important to keep
in mind later, in order to  distinguish the second piece of
radiation.
By the continuous dependence of the solution to equation
(\ref{eq:mainequation}) on the initial data in
$\HL(\R^3)$ and by finite speed of propagation,  if we take
$\epsilon$ sufficiently small and initial data
$(u_0,u_1)\in\HL\backslash\mathcal{M}$ with
\begin{equation}\|(u_0,u_1)-\OR{U}(0)\|_{\HL}<\epsilon, \label{eq:closedata}\end{equation} then
\begin{equation}
\|\OR{u}(T_1)-\OR{U}(T_1)\|_{\HL}
\end{equation}
can be made sufficiently small. Hence, noting that
$\|V\|_{L^{5/4}_tL^{5/2}_x(|x|\ge |t|)}$ is finite, we can apply Lemma~\ref{lem:perturbation}
  to conclude that
\begin{equation}\label{eq:veryclose}
\|\overrightarrow{u}(t)-\OR{U}(t)\|_{\HL(|x|\ge t-T_1)}\leq
\delta_1, \,\,{\rm for\,\,all\,\,}t\ge T_1.
\end{equation}
(\ref{eq:veryclose}) means that we can effectively identify $\OR{u}$
with $\OR{U}$ in the exterior region $$|x|\ge t-T_1,\,t\ge T_1.$$
Hence by (\ref{eq:decomp}), we see that
\begin{equation}\label{eq:appbound}
\|\OR{u}(t)-\OR{U}^L(t)\|_{\HL(|x|\ge t-T_1+L)}\leq 3\delta_1,
\end{equation}
that is, we can also identify $\OR{u}$ with $\OR{U}^L$ in the
exterior region $|x|\ge t-T_1+L,\,t\ge T_1$.

In order to avoid any possibility of confusion due to the many
parameters, we remark that $ \delta_1$ and $\epsilon$ can be made as
small as we wish, and will be chosen later. $T_1,\,L$ depend on $ \delta_1$ and
$\OR{U}$ only. $\epsilon$ is a small free parameter below some threshold determined by $\delta_1$.
The key point for us is that
$\epsilon_1>0$ is fixed no matter how small $\epsilon$ is chosen, see~\eqref{eq:eps 1}.

Since $(u_0,u_1)\not\in \mathcal{M}$, there exists an exit time $T_2>0$ from the $\epsilon_1$ ball, i.e.,  such that
\begin{equation}\label{eq:expelled}
\|\OR{u}(T_2)-\OR{U}(T_2)\|_{\HL(\R^3)}= \epsilon_1.
\end{equation}
Note that the choices of $T_1$ and $L$ do  not depend on $\epsilon$. Therefore, by the continuous dependence of the solution on its initial data in
$\dot{H}^1\times L^2$, if we choose $\epsilon$ sufficiently small,
we can assume $T_2>2(L+T_1+1)$.\\

\noindent
\textit{Step 2: Analyze the size of discrete mode at time $T_2$.}

Let us analyze $\OR{u}(T_2)$ in more detail.
By the estimates (\ref{eq:decomp}) and (\ref{eq:expelled}) we can write
\begin{equation}
\OR{u}(T_2)=(\phi,0)+\OR{U}^L(T_2)+(w_0,w_1), \label{eq:uw}
\end{equation}
where $\OR{w}=(w_0,w_1)\in\HL$ satisfies
\begin{equation}
2\epsilon_1\ge\epsilon_1+\delta_1\ge\|\OR{w}\|_{\HL(\R^3)}\ge
\epsilon_1-\delta_1\ge\epsilon_1/2, \label{eq:smallw}
\end{equation}
if $\delta_1$ is chosen smaller than $\frac{\epsilon_1}{2}$.
We now  list several facts:

(i) From (\ref{eq:closedata}), we infer that
\begin{equation}\left|\mathcal{E}(\OR{u}) - \mathcal{E}(\OR{U})\right| \lesssim \epsilon \label{eq:close-energy}\end{equation}
 
(ii)  Rewrite the decompostion (\ref{eq:uw}) in the form 
\begin{align}
\OR{u}(T_2)= & (\phi,0)+(\Lambda_0,\Lambda_1), \label{eq:uLambda}\\
(\Lambda_0,\Lambda_1)= &\OR{U}^L(T_2)+ (w_0,w_1),\label{eq:Lambda}\\
(w_0,w_1)= &\sum_{i=1}^n \left[\mu^{+}_i (\rho_i, k_i\rho_i) +\mu_i^{-} (\rho_i, -k_i\rho_i)\right] + (\gamma_0, \gamma_1),  \label{de-o}
\end{align}
 with orthogonality conditions $\int   \rho_0\gamma_j\, dx= \int   \rho_1\gamma_j \, dx=0$, for all $1\leq j\leq n.$
(\ref{eq:smallw}) implies that
\begin{equation}\epsilon_1^2 \lesssim \|(\gamma_0, \gamma_1)\|_{\HL}^2 + \sum_{i=1}^n \bigg[  k_i^2(\mu^+_i -\mu^-_i)^2  + (\mu^+_i +\mu^-_i)^2\bigg].  \label{eq:lowerbound}  
\end{equation}

(iii) Expand the energy functional at $T_2$. Since $\Lambda_0= U^L(T_2) +w_0$, from (\ref{eq:smallL6}),  (\ref{eq:smallw}) and our a priori choice $\delta_1<\frac12\epsilon_1$, we have    $\|\Lambda_0\|_{L^6}\lesssim  \epsilon_1$. We now apply  Lemma~\ref{lem:expansion1} and obtain 
 \begin{align}
\mathcal{E}(\OR{u}(T_2)) =& \, \mathcal{E}(\phi, 0)  + \frac12\big[ (\LL_{\phi}{U^L}(T_2), U^L(T_2)) +({U^L_t}(T_2),U^L_t(T_2)) \big]  - \sum_{i=1}^n 2\mu^+_i\mu^-_ik_i^2  \notag\\
&\hspace{.2cm}+ \,\frac12\big[ (\LL_\phi \gamma_0,  \gamma_0)+ (\gamma_1,\gamma_1)\big] +\left(\LL_{\phi} {U^L}(T_2) , \,w_0\right)
 + \left(U^L_t(T_2), w_1\right)  + O(\epsilon_1^3).
 \label{eq:Eu2}
\end{align}
Note that using the $L^6$ estimate of $U^L$ in (\ref{eq:smallL6}), we further have
\begin{align}
&\frac12\bigg[ (\LL_{\phi}{U^L}(T_2), U^L(T_2)) +({U^L_t}(T_2),U^L_t(T_2)) \bigg] \notag\\
&= \frac12 \left\|\OR{U}^L(T_2)\right\|_{\HL}^2 + \bigg((-V+5\phi^4)U^L(T_2), U^L(T_2)\bigg)\notag \\
&=\frac12 \left\|\OR{U}^L(T_2)\right\|_{\HL}^2 + O(\delta_1^2)\label{eq:UL}
\end{align}
In view of   (\ref{eq:uw}),  (\ref{eq:appbound})  together with (\ref{eq:decomp}) implies that
\[\|w_0\|_{\dot{H}^1(|x|\geq T_2-T_1+L)} +\|w_1\|_{L^2(|x|\geq T_2-T_1+L)} \leq 4\delta_1.\]
Thus $(w_0,w_1)$ is small inside the region $\{|x|\geq T_2-T_1+L\}$, while
(\ref{eq:mostlyexterior}) implies that $\OR{U}^L$ is small inside the region $\{|x|<T_2-T_1+L\}$:
\[\|\OR{U}^L(T_2)\|_{\HL(|x| < T_2-T_1+L)}\leq \delta_1.\]
Hence we get that
\begin{equation}
 \bigg|(U^L_t(T_2), w_1) \bigg|  =  \left|\int_{\{|x|\geq T_2-T_1+L\}\cup \{|x| < T_2-T_1+L\}}
  U^L_t(x,T_2)\, w_1(x)  \,dx \right|\lesssim  \delta_1 \label{eq:ULtw1};
 \end{equation}
and that
 \begin{align}
& \bigg|\left(\LL_{\phi} {U^L}(T_2) , \,w_0\right)\bigg|\notag\\
&= \left| \int (-\Delta -V +5\phi^4)U^L(x,T_2)\, w_0(x)\, dx \right| \notag\\
  &=\left| \int_{\{|x|\geq T_2-T_1+L\}\cup \{|x| < T_2-T_1+L\}} \nabla U^L(T_2) \cdot\nabla w_0   \,dx    + \int ( -V +5\phi^4) U^L(T_2) \,w_0 \,dx\right|\notag\\
 & \lesssim \delta_1  \label{eq:LLULw}
 \end{align}
 Now let us combine  estimates (\ref{eq:UL}), (\ref{eq:ULtw1}), (\ref{eq:LLULw}) with  (\ref{eq:Eu2}),  noting (\ref{eq:energysum}), we deduce 
   \begin{align}
 \mathcal{E}(\OR{u}) = \mathcal{E}(\OR{U})
 +\frac12\bigg[ (\LL_\phi \gamma_0,  \gamma_0)+ (\gamma_1,\gamma_1)\bigg] - \sum_{i=1}^k 2\mu^+_i\mu^-_ik_i^2 +O(\delta_1 +\epsilon_1^3) \label{eq:Eu3}
\end{align}
(iv) Since $(\gamma_0,\,\gamma_1)$ is in the continuous spectrum and $\mathcal{L}_{\phi}$ has no zero eigenvalues or zero resonance, we have 
\[(\LL_\phi \gamma_0,  \gamma_0)+ (\gamma_1,\gamma_1) \gtrsim \|(\gamma_0,\gamma_1)\|^2_{\HL}\]
In combination with (\ref{eq:close-energy}), (\ref{eq:lowerbound}), and  (\ref{eq:Eu3}), this coercivity  yields 
\begin{align*}
&\mathcal{E}(\OR{u}) - \mathcal{E}(\OR{U}) + \sum_{i=1}^n 2\mu^+_i\mu^-_ik_i^2 \\
&=  \frac12\bigg[ (\LL_\phi \gamma_0,  \gamma_0)+ (\gamma_1,\gamma_1)\bigg] +\,O(\delta_1 +\epsilon_1^3)\\
&\gtrsim  \|(\gamma_0,\gamma_1)\|^2_{\HL} +O(\delta_1 +\epsilon_1^3)\\
&\gtrsim c\epsilon_1^2 -  \sum_{i=1}^n  \bigg[k_i^2(\mu^+_i -\mu^-_i)^2  + (\mu^+_i +\mu^-_i)^2\bigg] +O(\delta_1 +\epsilon_1^3).
\end{align*}
This implies that
\begin{align*}
& \sum_{i=1}^n 2\mu^+_i\mu^-_ik_i^2 + \frac{C}{2} \sum_{i=1}^n  k_i^2(\mu^+_i -\mu^-_i)^2  + \frac{C}{2}\sum_{i=1}^n (\mu^+_i +\mu^-_i)^2  \\
&\gtrsim  \epsilon_1^2   -\bigg|\mathcal{E}(\OR{U}) - \mathcal{E}(\OR{u})\bigg| - O(\delta_1+\epsilon_1^3)
\\
&\gtrsim  \epsilon_1^2 - C\epsilon - O(\delta_1+\epsilon_1^3).
\end{align*}
Since all the constants depend only   on $U$, we can choose $\delta_1, \epsilon\ll \epsilon_1^2 $ and   conclude that 
\begin{equation}
\sum_{i=1}^n |\mu^+_i|^2 +|\mu^-_i|^2\gtrsim  \epsilon_1^2 \label{eq:musum}
\end{equation}
Now we denote $|\mu_{max}|=\max\{|\mu_i^+|, |\mu_i^-|,  1\leq i\leq n\}$. We can find $1\leq i_0\leq n$  such that either $|\mu_{i_0}^+| =|\mu_{max}|$ or $|\mu^-_{i_0}|=|\mu_{max}|$.  
From
(\ref{eq:smallw}) and (\ref{eq:musum}), we get
\[2n|\mu_{max}|^2 \geq c\epsilon_1^2 \geq \frac{c}{4}\|\OR{w}\|_{\HL}^2,\]
hence $|\mu_{max}|\geq \sqrt{\frac{c}{8n}}\|\OR{w}\|_{\HL}$. The constant $c$ only depends on $ V$ and $\phi.$\\
 
\noindent
\textit{Step 3: Show the second emission of energy and finish the proof. }

\medskip

\textbf{Case 1: $|\mu_{max}|=|\mu_{i_0}^+|$. }    Consider the solution $\tilde{u}$ to equation (\ref{eq:mainequation}) with
 \[\OR{\tilde{u}}(T_2) = (\phi,0)+(w_0,\,w_1),\]
Take $\kappa=\sqrt{\frac{C}{8n}}$,  $K$ and $\epsilon_{\ast}$  corresponding to $R=0$ in Lemma~\ref{nonlinear}. Note that both parameters depend only on $V$. 
With these choices of parameters, we get $\varepsilon(\kappa,K,\epsilon_{\ast})>0$ from Lemma~\ref{lem:growth}. Shrinking $\epsilon_1$ if necessary, we can assume that $\epsilon_1<\varepsilon(\kappa,K,\epsilon_{\ast})$. We emphasize that none of these parameters depend on $\delta_1$ or $\epsilon$, which are free parameters at this point. This is very important of course, in order not to run into a circular argument. We also note that $T=T(\kappa,K,V)$ from Lemma~\ref{lem:growth} 
does not depend on $\delta_1$ or $\epsilon$. \\
We can now apply part (1) of Lemma~\ref{lem:growth} and part (2) of Lemma~\ref{nonlinear} to conclude that  
\begin{equation}\int_{|x|>t-(T_2+T)} |\partial_t \tilde{u}|^2(x,t)\, dx \geq c(\epsilon_1)>0, \hspace{1cm} \text{ for }t\geq T_2+T.\label{eq:sendenergy}\end{equation}
Denote $\Xi:= R^3\times[T_2,\,T_2+T]\bigcup \{(x,t):\,|x|>t-T_2-T,\,t\ge T_2+T\}$. Note that
$$\left(|V|+\phi^4\right)\chi_{\Xi}\in L^{\frac{5}{4}}_tL^{\frac{5}{2}}_x(\mathbb{R}^3\times \mathbb{R}),$$
and that $U^L+\tilde{u}$ is an approximate solution to (\ref{eq:mainequation}) with a right hand side $f$ with $\|f\|_{L^1_tL^2_x}\lesssim \delta_1$.
By bound (\ref{eq:smallfreewave}) and Lemma
\ref{lem:perturbation} (by treating $u$ as perturbation of $U^L+\tilde{u}$), if we choose $\delta_1$ sufficiently small, then  for $(x,t)\in\Xi$,
\begin{equation}\label{eq:anotherdecomp}
\OR{u}(t,x)=\OR{U}^L(x,t)+\OR{\tilde{u}}(x,t)+\OR{r}(x,t),\end{equation}
where the remainder term $\OR{r}$ satisfies
\begin{equation}\label{eq:errorsmall3}
\sup_{t\in \R}\|\OR{r}(t)\|_{\HL(\R^3)}\leq C\delta_1.
\end{equation}
The estimate (\ref{eq:appbound}), decomposition
(\ref{eq:anotherdecomp}) and the estimate on the remainder term
(\ref{eq:errorsmall3}) imply for $t\ge T_2$ (in particular, $t\geq T_2+T$)
\begin{equation}
\int_{|x|\ge t-T_1+L}|\nabla_{t,x}\tilde{u}|^2(x,t)\, dx\leq
C\delta_1,
\end{equation}
this combined with (\ref{eq:sendenergy}) implies that for $t\geq T_2+T$
\begin{equation}
\int_{t-T_1+L\ge |x|\ge t-(T_2+T)}|\nabla_{t,x}\tilde{u}|^2(x,t)\, dx\ge
c(\epsilon_1)-C\delta_1.
\end{equation}
Hence by estimating $\|\OR{u}(t)\|_{\HL}$ in different regions $\{|x|\ge t-T_1+L\}$ and $\{t-T_1+L\ge |x|\ge t-(T_2+T)\}$, we get that
\begin{align}
&\|\OR{u}\|^2_{\HL(|x|\ge t-(T_2+T))}\notag\\
&\geq  \left\|\OR{U}^L+\OR{\tilde{u}}\right\|^2_{\HL(|x|\ge t-(T_2+T)} -C_1\delta_1\notag\\
&\geq   \left\|\OR{U}^L\right\|^2_{\HL(|x|\ge t-T_1+L)} + c(\epsilon_1)-C_2\delta_1\notag\\
&\geq \left\|\OR{U}^L\right\|^2_{\HL(\R^3)}+c(\epsilon_1)-C_3\delta_1\ge \left\|\OR{U}^L\right\|^2_{\HL(\R^3)}+\frac12 c(\epsilon_1).\label{eq:uexe}
\end{align}
The last line holds when we choose $\delta_1$
sufficiently small.
(\ref{eq:emitmoreenergy}) is then proved with $A=T_2+T$ and
$\delta =\frac12c(\epsilon_1)>0$.

Now we prove that $u$ cannot scatter to $(\phi, 0)$  $t\rightarrow +\infty$. Suppose it does so with free radiation $\OR{u}^L$, i.e., 
\[\|\OR{u}(t)  -(\phi,0) -\OR{u}^L(t) \|_{\HL}\rightarrow 0,  \hspace{1cm} \text{ as  }t\rightarrow +\infty.\]
Then (\ref{eq:uexe}) implies that
\begin{equation}\label{eq:moreradiatedenergy}
\|\OR{u}^L(t)\|^2_{\HL(\R^3)}\ge \lim_{t\rightarrow \infty
}\|\OR{u}(t)\|^2_{\HL(|x|\geq t-A)}\ge
\|\OR{U}^L(t)\|^2_{\HL(\R^3)}+\delta.
\end{equation}
Note that
\[
\|\OR{U}^L\|^2_{\HL}=\mathcal{E}(\OR{U})-\mathcal{E}(\phi,0),\hspace{1cm}
\|\OR{u}^L\|^2_{\HL}=\mathcal{E}(\OR{u})-\mathcal{E}(\phi,0).\]
We have reached contradiction with (\ref{eq:moreradiatedenergy}) if
  $\|\OR{u}(0)-\OR{U}(0)\|_{\HL(\R^3)}$ is chosen small, and thus have proved the theorem in case~1.

\smallskip

  \textbf{Case 2:$|\mu_{max}|=|\mu_{i_0}^-|$.} We will show this is impossible if we take $\epsilon$ small enough.  In fact, again applying part (1) of Lemma~\ref{lem:growth} and part (2) of Lemma~\ref{nonlinear},
 consider the solution $\tilde{u}$ to equation (\ref{eq:mainequation}) with data \[\OR{\tilde{u}}(T_2) = (\phi,0)+(w_0,w_1).\]
  We can find a time $T>0$ such that
\begin{equation}\int_{|x|>|t-(T_2-T)|} |\partial_t \tilde{u}|^2(x,t)\, dx \geq c(\epsilon_1)>0, \hspace{1cm} \text{ for }t\leq T_2-T.\label{eq:sendenergy2}\end{equation}
By   taking $\epsilon$ sufficiently small, we can assume $T_2>2T$. Now
setting time $t=0$ in   (\ref{eq:sendenergy2}), we get $\|\OR{\tilde{u}}(0)\|_{\HL(|x|>\frac12 T_2)} >c(\epsilon_1).$

Introduce the set  
$$\Xi':=R^3\times[T_2-T,\,T_2]\bigcup \{(x,t):\,|x|>|t-(T_2-T)|,\,0\leq t\leq T_2-T\}.$$
In analogy  to case~1, if we choose $\delta_1>0$ sufficiently small, then for $(x,t)\in \Xi'$  we have
\begin{equation}\label{eq:anotherdecomp2}
\OR{u}(t,x)=\OR{U}^L(t,x)+\OR{\tilde{u}}(t,x)+\OR{r}(t,x),\end{equation}
with the remainder term $\OR{r}$ satisfying
\begin{equation}\label{eq:errorsmall}
\sup_{t\in \R}\|\OR{r}(t)\|_{\HL(\R^3)}\leq C\delta_1.
\end{equation}
From (\ref{eq:decomp}) and by our choice of $T_2$, i.e.,  $T_2>2(L+T_1+1)$ and $T_2>2T$, we have $\|\OR{U}^L(0)\|_{\HL(|x|>\frac12 T_2)}<\delta_1$ and
\begin{align*}
\|(u_0,u_1)\|_{\HL(|x|>\frac12 T_2)} \geq \|\nabla_{x,t}\tilde{u}(0)\|_{\HL(|x|>\frac12 T_2)} - C\delta_1 > c(\epsilon_1)-C\delta_1 >\frac{1}{2}c(\epsilon_1)
\end{align*}
The last inequality holds provided we take $\delta_1$ small enough.  This yields 
a contradiction to the finite energy of $\OR{U}(0)$ by choosing $\epsilon$ sufficiently small and $T_2$ sufficiently large. Hence case~2 does not arise and we are done. 
 \end{proof}

Next we prove the property of path connectedness. 
 \begin{theorem}\label{path} For any unstable excited state $(\phi, 0)$, the corresponding center-stable manifold $\mathcal{M}_{\phi}$ is path connected. 
 \end{theorem}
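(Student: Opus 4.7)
The plan is to show that any $\OR u_0\in\mathcal M_\phi$ lies in the same path-component as $(\phi,0)$ inside $\mathcal M_\phi$. First, by Theorem~\ref{th:localmanifold} every point of $\mathcal M_\phi$ has a neighborhood in $\mathcal M_\phi$ given by the $C^1$ graph of $\Psi$ over a convex open ball in $X_{cs}$, so $\mathcal M_\phi$ is locally path-connected and its path-components coincide with its connected components and are both open and closed. It is therefore enough to exhibit a continuous path inside $\mathcal M_\phi$ from $\OR u_0$ to $(\phi,0)$.

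Let $\OR u(t)$ denote the solution with data $\OR u_0$ and let $\OR U^L$ be its asymptotic free radiation, so $\|\OR u(t)-(\phi,0)-\OR U^L(t)\|_{\HL}\to 0$ as $t\to+\infty$. I would build the path by ``dialing down'' the radiation along the ray $\{\sigma\OR U^L\}_{\sigma\in[0,1]}$ via a finite chain of local center-stable manifolds. Fix $T$ large and a partition $0=\sigma_0<\sigma_1<\cdots<\sigma_N=1$ with
\[
\max_k(\sigma_{k+1}-\sigma_k)<\frac{\epsilon_0}{3\|\OR U^L\|_{\HL}},
\]
where $\epsilon_0$ is the neighborhood size from Theorem~\ref{th:localmanifold}. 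Starting with $\OR u^{\sigma_0}:=(\phi,0)$, I would inductively assume that $\OR u^{\sigma_k}\in\mathcal M_\phi$ scatters to $(\phi,0)$ with radiation $\sigma_k\OR U^L$. Apply Theorem~\ref{th:localmanifold} to the trajectory of $\OR u^{\sigma_k}$: its local center-stable manifold at time $T$ is a graph over $\OR u^{\sigma_k}(T)+(B_{\epsilon_0}(0,0)\cap X_{cs})$. The natural time-$T$ perturbation is $(\sigma_{k+1}-\sigma_k)\OR U^L(T)$, of $\HL$-norm below $\epsilon_0/3$. By Huygens' principle $U^L(T,\cdot)\to 0$ uniformly on compact sets (as used in the proof of Lemma~\ref{lem-Gamma}) and the eigenfunctions $\rho_j$ decay exponentially, so the unstable projections $P_j\OR U^L(T)$ vanish as $T\to+\infty$. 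Thus for $T$ large enough, after subtracting a small $X_u$-correction of arbitrarily small $\HL$-norm, the target lies in $X_{cs}\cap B_{\epsilon_0/2}(0,0)$, and the local manifold produces $\OR u^{\sigma_{k+1}}\in\mathcal M_\phi$ together with a continuous path inside $\mathcal M_\phi$ from $\OR u^{\sigma_k}(0)$ to $\OR u^{\sigma_{k+1}}(0)$ obtained by tracing a straight segment in the $X_{cs}$-parameter.

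The main obstacle is closing the induction: $\OR u^{\sigma_{k+1}}$ must scatter to $(\phi,0)$ with asymptotic free radiation \emph{exactly} $\sigma_{k+1}\OR U^L$, so that at the next step the perturbation $(\sigma_{k+2}-\sigma_{k+1})\OR U^L(T)$ still fits inside the local manifold around $\OR u^{\sigma_{k+1}}$ with no accumulated drift. This amounts to the $C^1$ invertibility of the map sending the $X_{cs}$-parameter on each local manifold to the associated asymptotic free radiation, a property encoded in the scattering analysis of Step~4 in the proof of Theorem~\ref{th:localmanifold}. An implicit-function / contraction-mapping argument on a ball of radius $\epsilon_0/2$ in $X_{cs}$ then adjusts the perturbation at each step so that the asymptotic radiation matches $\sigma_{k+1}\OR U^L$ exactly; by uniqueness of the wave operator one obtains $\OR u^{\sigma_N}=\OR u$. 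Concatenating the $N$ continuous segments produces a single continuous path in $\mathcal M_\phi$ from $(\phi,0)$ to $\OR u_0$, proving path-connectedness.
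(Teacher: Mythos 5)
Your approach is genuinely different from the paper's and, as written, has gaps that are not minor. The paper connects two \emph{arbitrary} points $(u_0,u_1),(\tilde u_0,\tilde u_1)\in\mathcal M_\phi$ directly: at a single large time $T$ (chosen so that both $h=u-\phi$ and $\ell=\tilde u-\phi$ have small reversed Strichartz norms), it builds a one-parameter family of solutions $w(\theta,\cdot,\cdot)=(1-\theta)u+\theta\tilde u+\eta$ by running the contraction mapping of Section~\ref{sec:localmanifold} with a $\theta$-dependent forcing. The crucial observation is that the resulting nonlinearity for $\eta$ has (a) an $\eta$-independent part bounded by $\epsilon^2$, (b) linear-in-$\eta$ terms whose coefficients all carry a factor of $h$ or $\ell$ and are hence small, and (c) vanishes identically at $\theta=0,1$ when $\eta=0$; choosing $\lambda_i(\theta,T)=\tfrac1n\delta\theta(1-\theta)$ and $\vec\gamma(\theta,T)=\vec0$ forces $w(0)=u$ and $w(1)=\tilde u$ by uniqueness of the fixed point. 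No wave operator is ever invoked.

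Your scheme instead tries to reach $(\phi,0)$ from $\OR u_0$ by ``dialing down'' the free radiation through a chain of local manifolds. Two steps are not justified and would require real additional theorems. First, to close the induction you would need the map from the $X_{cs}$-parameter on a local manifold to the asymptotic free radiation to hit $\sigma_{k+1}\OR U^L$ exactly. The linearization of that map annihilates the $n$ stable discrete directions $(\rho_j,-k_j\rho_j)\subset X_{cs}$ (they decay like $e^{-k_jt}$ and radiate nothing), so it is never injective, hence not invertible, and the implicit function theorem cannot be applied as stated. One could hope for surjectivity onto small free radiations, but that amounts to asymptotic completeness (existence of inverse wave operators) for the linear evolution $\partial_{tt}+\omega^2$; Claim~\ref{claim3} gives only the forward scattering direction, and the paper never proves the converse. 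Second, even if the chain were constructed, concluding $\OR u^{\sigma_N}=\OR u$ ``by uniqueness of the wave operator'' assumes that a point of $\mathcal M_\phi$ is determined by its asymptotic radiation. For exactly the same reason — the stable discrete modes contribute nothing to the radiation — this is false for the linearized problem, and no nonlinear uniqueness of this kind is established anywhere in the paper. So your path would end at \emph{some} solution with radiation $\OR U^L$, not necessarily at $\OR u$.

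Your opening observation that $\mathcal M_\phi$ is locally path-connected (being locally a $C^1$ graph over a convex ball in $X_{cs}$), so path-components coincide with connected components and are clopen, is correct and potentially useful: it means connectedness suffices. But you do not then prove connectedness; you go back to constructing an explicit path, which is where the gaps above appear. To repair this along your lines you would have to build a genuine nonlinear wave operator (and handle the stable-mode degeneracy), which is a substantial detour compared to the paper's direct interpolation at a fixed large time.
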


\begin{proof}
Given data $(u_0, u_1), (\tilde{u}_0,\tilde{u}_1)\in \mathcal{M}_{\vp}$, we denote the corresponding  solutions by $u, \tilde{u}$. Write $h=u-\vp, \ell=\tilde{u}-\vp$. 
Repeat step~1 and step~2  in  the proof of Theorem~\ref{th:localmanifold}. Then given any $\epsilon \ll 1$, we can find $T=T(\epsilon, u, \tilde{u})$, such that 
\begin{equation}\|h\|_{L^{6,2}_xL^{\infty}_t\cap L^{\infty}_xL^2_t ( \R^3\times[T,\infty))} <\epsilon,\hspace{0.1cm} \|\ell \|_{L^{6,2}_xL^{\infty}_t\cap L^{\infty}_xL^2_t ( \R^3\times[T,\infty))} <\epsilon.  \label{h-l-small}\end{equation}
Now we seek a function $w(\theta, t, x)$ of the form 
\begin{align}w(\theta, t, x) &=(1-\theta) u +\theta\tilde{u}+\eta \notag \\ &= \vp + (1-\theta) h +\theta \ell +  \sum_{i=1}^n  \lambda_i(\theta, t)\rho_i +\gamma(\theta, t,x) \label{w-formula}\end{align}
such that  for all $\theta\in [0,1]$, $\gamma(\theta, t,x)\perp \rho_i, i=1,\ldots, n$ and  $w(\theta,t,x)$ is a solution to equation (\ref{eq:mainequation}) that scatters to $\vp$.

For $\theta\in[0,1]$ fixed,   the equation satisfied by $\eta = \sum_{i=1}^n  \lambda_i(\theta, t)\rho_i +\gamma(\theta, t,x)$ is: 
\[\eta_{tt}-\Delta\eta -V(x)\eta + 5\vp^4\eta + N(\theta,h,\ell,\vp,\eta)=0,\]
where 
\[N(\theta,h,\ell,\vp,\eta) = (\vp + (1-\theta) h +\theta \ell +\eta )^5 -(1-\theta) (\vp+h)^5 -\theta (\vp +\ell)^5 - 5\vp^4 \eta.\]
Now  we  can repeat the stability condition (\ref{condition2}) and obtain  the reduced system of the form (\ref{system-2}).

In $N(\theta,h,\ell,\vp,\eta)$, the terms independent of $\eta$ are of the form 
\begin{align*}(\vp + (1-\theta) h +\theta \ell   )^5 -(1-\theta) (\vp+h)^5 -\theta (\vp +\ell)^5
=\sum_{i+j+k=5, i\leq 3} C(\theta, i, j, k) \vp^i h^j \ell^k. 
 \end{align*}
Notice that there are no  terms  $\vp^5$ or $\vp^4 h,  \phi^4\ell$. 
 
 Also, the linear term of $\eta$ in  $N(\theta,h,\ell,\vp,\eta)$ is
 \[5(\vp + (1-\theta) h +\theta \ell )^4 \eta   - 5\vp^4 \eta\]
hence  all linear terms involve a factor of $h$ or $\ell$.

Now we can repeat estimates (\ref{tilde-lambda})(\ref{tilde-gamma}),  then (\ref{N1-estimate}) for the linear term in $\eta$, (\ref{N2-estimate}) for higher order terms in $\eta$.  
We also have  the following estimate on terms independent of $\eta$
\[\left\|\sum_{i+j+k=5, i\leq 3} C(\theta, i, j, k)  \vp^i h^j \ell^k\right\|_{\st{1}{2}( [T,\infty)\times \R^3)}\lesssim \epsilon^2  \] 
 To sum up, using the $X$ norm defined in (\ref{define-X}), we conclude that 
 \begin{align*}\|(\lambda_1,\cdots, \lambda_n, \gamma)\|_{X([T,\infty))}\leq & \, L\epsilon^2 +  L\left( \sum_{i=1}^n |\lambda_i(\theta, T)| + \|(\gamma(\theta, T), \dot{\gamma}(\theta,T))\|_{\hl} \right)
\\ &+ L \epsilon \|(\lambda_1,\cdots, \lambda_n, \gamma)\|_{X([T,\infty))} + L \sum_{k=2}^5\|(\lambda_1,\cdots, \lambda_n, \gamma)\|_{X([T,\infty))}^k,\end{align*}
where $L>1$ is a constant only depending on the constants in the
reversed Strichartz estimates,  $\|\phi\|_{L^6(\R^3)}$ and
$\|\rho_i\|_{L^\infty_x\cap L^{6,2}_x}$. 

Moreover, in a similar fashion one sees that the difference of two solutions satisfies a similar estimate in which   the first two terms disappear.  Following step 3 of the proof for Theorem~\ref{th:localmanifold}, we  can use the contraction mapping principle and conclude that for  sufficiently small  data
\[ \sum_{i=1}^n |\lambda_i(\theta,T)| + \|(\gamma(\theta,T), \dot{\gamma}(\theta, T))\|_{\hl}\leq \delta\]
there is  a solution  $w$ as in (\ref{w-formula})   which solves (\ref{eq:mainequation}). We can also  check that $w$ scatters to $\phi$ as in step 4 of the proof for Theorem~\ref{th:localmanifold}.
 
In particular, let us take 
$\lambda_i(\theta,T)=\frac{1}{n}\delta \theta(1-\theta)$ and $\vec{\gamma}(\theta,T,x)=\vec{0}$. We claim that the corresponding solution $w(\theta, t, x)$  satisfies the following relation
\begin{equation} w(0,t,x)=u(t,x),\quad w(1,t,x)=\tilde{u}(t,x),\quad \text{ for all } t\in \R. \label{w-equal}\end{equation}
In fact, notice that $\lambda_i(0,T)=0, \vec{\gamma}(0,T,x)=\vec{0}$ implies $\lambda_i(0,t)=0, \vec{\gamma}(0,t,x)=\vec{0}$ for $t\geq T$, which further implies $w(0,t,x)=u(t,x), t\geq T$. Similarly we have $w(1,t,x)=\tilde{u}(t,x), t\geq T$. Then (\ref{w-equal}) follows from the  uniqueness of solutions to equation (\ref{eq:mainequation}).

Hence $\{\vec{w}(\theta, 0, x), \theta\in [0,1]\}$ is a path in $\mathcal{M}_{\phi}$ connecting the two data $(u_0, u_1), (\tilde{u}_0, \tilde{u}_1)$.
\end{proof}

Now we can finish the proof for our main theorem.
\begin{proof}[Proof of Theorem \ref{th:maintheoremintro}.]
We only consider the case in which $(\phi,0)$ is unstable; stable $(\phi,0)$   can be handled using standard
perturbation arguments and the reversed Strichartz estimates. We only note that due to the lack of local wellposedness of equation (\ref{eq:mainequation}) in the reverse Strichartz space $L^{6,2}_xL^{\infty}_t\cap L^{\infty}_xL^2_t$, we need to use the fact that
$$\lim_{T\to\infty}\|U-\phi\|_{L^{6,2}_xL^{\infty}_t\cap L^{\infty}_xL^2_t(\mathbb{R}^3\times[T,\infty))}=0,$$
if $U(t)$ scatters to $\phi$ as $t\to\infty$. This fact can be easily deduced by using the same argument as in Claim~\ref{claim1}.
In some small neighborhood of any
point $\OR{U}(0)$ on $\mathcal{M}_{\phi}$, $\mathcal{M}_{\phi}$
coincides with the local center-stable  manifold $\mathcal{M}$ of
codimension $n$ which we constructed in Section~\ref{sec:localmanifold}. By
Theorem \ref{th:nongeneric}, $\mathcal{M}_{\phi}$ is thus a global
manifold of co-dimension~$n$. The path-connectedness follows from Theorem~\ref{path}. 
\end{proof}

\bibliography{NLWbib}
\bibliographystyle{amsplain}

\end{document}